\documentclass[12pt]{amsart}
\usepackage[colorlinks,citecolor=blue,urlcolor=black,linkcolor=black]{hyperref}
\usepackage{amssymb}

\textwidth = 13.5cm
\textheight = 600pt

\newtheorem{thm}{Theorem}[section]

\newtheorem*{THMA}{Theorem A}
\newtheorem*{THMB}{Theorem B}
\newtheorem*{THMD}{Theorem D}
\newtheorem*{THMC}{Theorem C}
\newtheorem{lemma}[thm]{Lemma}
\newtheorem{cor}[thm]{Corollary}
\newtheorem{claim}{Claim}[thm]
\newtheorem{prop}[thm]{Proposition}
\newtheorem{q}{Question}

\theoremstyle{definition}
\newtheorem{defn}[thm]{Definition}
\newtheorem{remark}[thm]{Remark}

\theoremstyle{remark}
\newtheorem*{note}{Note}

\def\s{\subseteq}
\def\bks{\setminus}
\def\br{\blacktriangleright}
\def\npg{}

\fboxsep0.1mm
\def\sd{\framebox[3.6mm][l]{$\diamondsuit$}\hspace{0.5mm}{}}
\def\sc{\framebox[3.6mm][l]{$\clubsuit$}\hspace{0.5mm}{}}

\def\fsc{\framebox[2.9mm][l]{$\clubsuit$}\hspace{0.5mm}{}}

\DeclareMathOperator{\reg}{Reg}
\DeclareMathOperator{\cf}{cf}
\DeclareMathOperator{\refl}{Refl}
\DeclareMathOperator{\im}{Im}
\DeclareMathOperator{\dom}{dom}
\DeclareMathOperator{\otp}{otp}
\DeclareMathOperator{\acc}{acc}
\DeclareMathOperator{\ACC}{Acc}
\DeclareMathOperator{\nacc}{nacc}
\DeclareMathOperator{\NACC}{Nacc}
\DeclareMathOperator{\zfc}{\hbox{\textsf{ZFC}}}
\DeclareMathOperator{\gch}{\hbox{\textsf{GCH}}}
\DeclareMathOperator{\ch}{CH}

\DeclareMathOperator{\drop}{Drop}

\DeclareMathOperator{\T}{\hbox{\textbf{T}}}
\DeclareMathOperator{\h}{ht}

\begin{document}
\author{Assaf Rinot}
\address{The Center for Advanced Studies in Mathematics,\\
Ben-Gurion University of the Negev,\\
P.O.B. 653, Be'er Sheva 84105, Israel.}
\email{paper11@rinot.com}
\urladdr{http://www.assafrinot.com}

\title[The Ostaszewski square]{The Ostaszewski square, and homogenous Souslin trees}
\begin{abstract} Assume $\gch$ and let $\lambda$ denote an uncountable cardinal.
We prove that if $\square_\lambda$ holds,
then this may be  witnessed by a coherent sequence $\left\langle C_\alpha \mid \alpha<\lambda^+\right\rangle$
with the following remarkable guessing property:

For every sequence
$\langle A_i\mid i<\lambda\rangle$
of unbounded subsets of $\lambda^+$,
and every limit $\theta<\lambda$,
there exists some $\alpha<\lambda^+$ such that
$\otp(C_\alpha)=\theta$, and the $(i+1)_{th}$-element of $C_\alpha$ is a member of $A_i$, for all $i<\theta$.

As an application, we construct an homogenous $\lambda^+$-Souslin tree
from $\textsf{GCH}+\square_\lambda$, for every singular cardinal $\lambda$.

In addition, as a by-product, a theorem of Farah and Veli\v{c}kovi\'{c}, and a theorem of Abraham, Shelah and Solovay
are generalized to cover the case of successors of regulars.

\end{abstract}
\subjclass[2000]{Primary  03E05; Secondary  03E35.}
\maketitle

\setcounter{tocdepth}{1}
\tableofcontents

\newpage\section{Introduction}
\subsection*{Background and results}
For a cardinal $\lambda$, Jensen's square principle, $\square_\lambda$, asserts the existence of a sequence $\langle C_\alpha\mid \alpha<\lambda^+\rangle$
such that for every limit ordinal $\alpha<\lambda^+$:
\begin{enumerate}
\item $C_\alpha$ is a club in $\alpha$ of order-type $\le\lambda$;
\item if $\beta\in\acc(C_\alpha)$, then $C_\beta=C_\alpha\cap\beta$.\footnote{Here, $\acc(C_\alpha):=\{\beta\in C_\alpha\mid \sup(C_\alpha\cap\beta)=\beta\}$
stands for the set of accumulation points of $C_\alpha$. Similarly, we define $\nacc(C_\alpha):=C_\alpha\bks\acc(C_\alpha)$.}
\end{enumerate}

$\square_\omega$ is a consequence of \textsf{ZFC}, while $\square_\lambda$ for an uncountable $\lambda$,
is a principle independent \textsf{ZFC}. Jensen \cite{jensen} proved that if $V=L$,
then $\square_\lambda$ holds for every (uncountable) cardinal $\lambda$, and utilized this fact in
proving that in G\"odel's constructible universe $L$, for every uncountable cardinal $\lambda$, there exists a $\lambda^+$-Souslin.

One of the basic observations concerning square is that if $\overrightarrow C=\langle C_\alpha\mid \alpha<\lambda^+\rangle$
is a $\square_\lambda$-sequence, then up to some trivial modifications, so does $\ACC(\overrightarrow C):=\langle \acc(C_\alpha)\mid \alpha<\lambda^+\rangle$.
This suggests that the sequence $\NACC(\overrightarrow C):=\langle \nacc(C_\alpha)\mid \alpha<\lambda^+\rangle$
is of no interest.
And indeed, at least to the best of our knowledge,  the current literature omits the study of this object.

In this paper, we shall show that $\NACC(\overrightarrow C)$ can be as wild as one can imagine,
and demonstrate that the move from $\overrightarrow C$ to $\ACC(\overrightarrow C)$ may lead to the loss of a treasure,
in the sense that $\NACC(\ACC(\overrightarrow C))$ may be considerably poorer than  $\NACC(\overrightarrow C)$.
For this, we introduce a syntactical  strengthening of $\square_\lambda$ which we denote by $\sc_\lambda$,
and prove that the latter follows from the former,  provided that certain fragments of the $\gch$ holds.
In particular, this yields that $L\models ``\sc_\lambda\text{ is valid for every uncountable cardinal }\lambda\text{''}$.

Let us commence with defining the following weak variation of $\sc_\lambda$:

\begin{defn} For an infinite cardinal $\lambda$, and a stationary subset $S\s\lambda^+$,
$\sc_\lambda(S)$ asserts the existence of a $\square_\lambda$-sequence $\langle C_\alpha\mid\alpha<\lambda^+\rangle$ and
a subset $S'\s S$ such that:
\begin{itemize}
\item[(3)] for every club $D\s\lambda^+$, and every cofinal $A\s\lambda^+$,
there exists some $\alpha\in S'$ such that $\acc(C_\alpha)\s D$ and $\nacc(C_\alpha)\s A$;
\item[(4)] $S'\cap\acc(C_\alpha)=\emptyset$ for all $\alpha<\lambda^+$.
\end{itemize}
\end{defn}
\begin{note} Clause (3) is equivalent to the assertion that $\langle \nacc(C_\alpha)\mid \alpha\in S'\rangle$ is a $\clubsuit(S')$-sequence,
hence the choice of notation.
\end{note}
In \cite{velickovicfarah}, it is proved that if $\ch_\lambda+\square_\lambda$ holds for a cardinal $\lambda$ which is \emph{singular strong limit of uncountable cofinality},
then there exists an almost-measure, non-measure algebra of size $\lambda^+$.\footnote{That is,
a complete Boolean algebra of size $\lambda^+$ which is not a measure algebra,
but any complete subalgebra of strictly smaller size is a measure algebra. The principle $\ch_\lambda$ asserts that $2^\lambda=\lambda^+$. }
A second look at their proof reveals that what is actually used, is the above sort of square. More precisely:
\begin{thm}[Farah-Veli\v{c}kovi\'{c}, implicit in \cite{velickovic}] Suppose that $\sc_\lambda(E^{\lambda^+}_\omega)$ holds
for a given cardinal $\lambda\ge\mathfrak d$.

Then there exists an almost-measure, non-measure algebra of size $\lambda^+$.
\end{thm}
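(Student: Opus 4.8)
The plan is to isolate the purely combinatorial skeleton of the Farah--Veli\v{c}kovi\'c argument and feed it $\sc_\lambda(E^{\lambda^+}_\omega)$ in place of their ambient hypotheses. Fix a witnessing $\square_\lambda$-sequence $\langle C_\alpha\mid\alpha<\lambda^+\rangle$ together with the stationary $S'\s E^{\lambda^+}_\omega$ supplied by clauses (3) and (4). Guided by this sequence I would construct, by recursion on $\alpha<\lambda^+$, a $\s$-increasing continuous tower $\langle B_\alpha\mid\alpha<\lambda^+\rangle$ of measure algebras of size $\le\lambda$, each carrying a strictly positive $\sigma$-additive measure $\mu_\alpha$, arranged to be \emph{coherent with the square}: whenever $\beta\in\acc(C_\alpha)$ I demand that $B_\beta$ sit inside $B_\alpha$ as a complete subalgebra with $\mu_\beta=\mu_\alpha\restriction B_\beta$, which is consistent precisely because clause (2) forces $C_\beta=C_\alpha\cap\beta$, so the recursive constraints never clash. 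The target algebra $B$ is then the completion of $\bigcup_{\alpha<\lambda^+}B_\alpha$, and $2^\lambda=\lambda^+$ keeps $|B|=\lambda^+$.

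Two features must be extracted from this tower. For the local property I would show that every complete subalgebra $C\s B$ with $|C|<\lambda^+$ is a measure algebra. The plan is a closing-off argument: the coherence of the tower yields a club of $\alpha$ at which $C$ is captured by $B_\alpha$, so that $C$ embeds as a complete subalgebra of the measure algebra $B_\alpha$ and inherits $\mu_\alpha\restriction C$. This is the step where $\lambda\ge\mathfrak d$ enters: a dominating family of size $\le\lambda$, absorbed into each level of the tower, guarantees that the $\omega$-indexed suprema defining $C$ are already computed inside some $B_\alpha$ rather than only in the completion, so that the reflection of $C$ is genuinely \emph{complete} and not merely a dense embedding.

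For the global failure I would argue that $B$ carries no strictly positive $\sigma$-additive measure, and here the obstruction is not local but a genuine failure of compactness at $\lambda^+$: the measures $\mu_\alpha$ are arranged to impose, on the $\lambda^+$ generators of $B$, a system of constraints satisfiable on every $<\lambda^+$-sized subsystem yet admitting no global solution. Given a candidate measure $\mu$ on $B$, a reflection argument produces a club $D\s\lambda^+$ of ordinals at which $\mu$ coheres with the tower, while the construction marks, cofinally in $\lambda^+$, a set $A$ of coordinates carrying the planted local obstructions. Feeding $(D,A)$ into clause (3) yields some $\alpha\in S'$ with $\acc(C_\alpha)\s D$ and $\nacc(C_\alpha)\s A$; since $\alpha\in E^{\lambda^+}_\omega$ the set $\nacc(C_\alpha)$ is an $\omega$-sequence cofinal in $\alpha$, and the constraints planted along it — which $\mu$ is forced to respect because $\acc(C_\alpha)\s D$ — collapse to an inconsistency witnessing the failure of order-continuity of $\mu$ at $\alpha$. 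Clause (4), namely $S'\cap\acc(C_\alpha)=\emptyset$, ensures that these fatal configurations, living at points of $S'$, are never folded into a smaller complete subalgebra through the coherence maintained at accumulation points, and so do not disturb the local property.

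The main obstacle is the tension between these two halves: one must design the planted configurations at each $\alpha\in S'$ so that a \emph{single} guess defeats \emph{every} $\sigma$-additive measure simultaneously, i.e.\ so that the data $(D,A)$ extracted from an arbitrary $\mu$ always steers clause (3) onto a genuinely fatal $\alpha$, while at the same time keeping each such obstruction invisible to all $<\lambda^+$-sized complete subalgebras so that locality survives. Getting the interplay between the $\clubsuit$-type guessing of $\nacc(C_\alpha)$ and the order-continuity of $\mu$ exactly right, and verifying with $\lambda\ge\mathfrak d$ that the local reflection never accidentally absorbs a fatal configuration, is where the real work lies; the remaining bookkeeping (size computations, continuity at limit stages, and the Maharam type of the $B_\alpha$) is routine.
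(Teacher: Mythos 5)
The first thing to note is that the paper contains no proof of this statement: it is quoted as ``implicit in'' the work of Farah and Veli\v{c}kovi\'{c}, the point of the attribution being that a second reading of their construction shows it only uses $\sc_\lambda(E^{\lambda^+}_\omega)$ together with $\lambda\ge\mathfrak d$; the paper's own contribution (Theorem A) is to supply that hypothesis from $\square_\lambda+\ch_\lambda$. So your attempt must be measured against the Farah--Veli\v{c}kovi\'{c} argument itself, and measured that way it has a genuine gap: it describes what a proof would have to accomplish rather than accomplishing it. The entire mathematical content of the theorem is the design of the ``planted configurations'' --- concretely, what the generators of the level algebras at a point $\alpha\in S'$ are, and why, for an \emph{arbitrary} $\sigma$-additive measure $\mu$ on the final algebra $B$, the constraints along $\nacc(C_\alpha)$ ``collapse to an inconsistency''. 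Your sketch defers exactly this step (``where the real work lies''). Note the timing problem that makes this step the crux: the configurations are planted at stage $\alpha<\lambda^+$, before any candidate measure --- an object defined on the whole $\lambda^+$-sized algebra --- exists, so the construction must defeat all measures in advance; asserting that the pair $(D,A)$ extracted from $\mu$ ``always steers clause (3) onto a genuinely fatal $\alpha$'' is a restatement of the theorem, not an argument for it.

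Second, you have placed $\lambda\ge\mathfrak d$ in the wrong half of the argument. The dominating family is precisely the device that resolves the timing problem above: from a candidate measure and an $\omega$-sequence one extracts a rate function, and because a dominating family of size $\le\lambda$ has been absorbed into the menu of configurations planted cofinally (this is what the cofinal set $A$ fed into clause (3) indexes), some guessed $\alpha\in E^{\lambda^+}_\omega$ carries a configuration whose rate dominates the one derived from $\mu$, and $\sigma$-additivity fails there. This is where $\mathfrak d$ is indispensable, and it is exactly the point your sketch leaves as magic. Conversely, spending $\mathfrak d$ on the local half does not do what you claim: capturing a complete subalgebra $C$ of size $\le\lambda$ inside some level $B_\alpha$ requires $\sigma$-completeness of the embeddings $B_\alpha\hookrightarrow B$ on a club of levels, which is in tension with the stationarily many ``twisted'' levels in $S'$ (by clause (4), $S'$ avoids accumulation points of the $C_\alpha$'s, but it is still stationary); a dominating family does not repair this, and in the original argument this half is handled by a Maharam-type analysis of the coherent tower, not by domination. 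As it stands, both halves of your plan are missing their key lemma.
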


Now, in this paper, it is proved:
\begin{THMA} Suppose that $\ch_\lambda$ holds for a given uncountable cardinal $\lambda$.

Then all of the following are equivalent:
\begin{itemize}
\item $\square_\lambda$;
\item $\sc_\lambda(S)$, for every stationary  $S\s E^{\lambda^+}_{\not=\cf(\lambda)}$;
\item $\sc_\lambda(S)$, for every $S\s \lambda^+$ that reflects stationarily often.
\end{itemize}
\end{THMA}
\begin{note} The models from \cite{sh186},\cite{kingsteinhorn}
witness that $\textsf{GCH}+\square_\lambda$ does \emph{not} imply $\sc_\lambda(S)$ for $S\s E^{\lambda^+}_{\cf(\lambda)}$ that does not reflect stationarily often.
\end{note}
Altogether, we get that if $\ch_\lambda+\square_\lambda$ holds for a cardinal $\lambda\ge\mathfrak d$,
then there exists an almost-measure, non-measure algebra of size $\lambda^+$.
In other words, the original hypothesis \cite{velickovicfarah} that $\lambda$ is a singular strong limit of uncountable cofinality
may be reduced to just ``$\lambda\ge\mathfrak d$''.

Let us now turn to central object of this paper.

\begin{defn}[The Ostaszewski square]\label{def13} For an infinite cardinal $\lambda$,
$\sc_\lambda$ asserts the existence of a $\square_\lambda$-sequence $\langle C_\alpha\mid\alpha<\lambda^+\rangle$ satisfying:
\begin{itemize}
\item[(3)] Suppose that $\langle A_i\mid i<\lambda\rangle$
is a sequence of unbounded subsets of $\lambda^+$.
Then for every limit $\theta<\lambda$, and every club $D\s\lambda^+$,
there exists some $\alpha<\lambda^+$ such that $\otp(C_\alpha)=\theta$, and for all $i<\theta$:
\begin{enumerate}
\item[(a)] $C_\alpha(i+1)\in A_i$;\footnote{Here,
$C_\alpha(i)$ stands for the $i_{th}$ element of $C_\alpha$, that is,
the unique $\beta\in C_\alpha$ satisfying $\otp(C_\alpha\cap\beta)=i$.}
\item[(b)] $C_\alpha(i)<\beta<C_\alpha(i+1)$ for some $\beta\in D$.
\end{enumerate}
\end{itemize}
\end{defn}
\begin{note}
To assist the reader digest clause (3), we offer the following equivalent formulation.
Suppose that $\langle A_i\mid i<\lambda\rangle$ is a sequence of unbounded subsets of $\lambda^+$.
Then for every limit $\theta<\lambda$, there exists a limit $\alpha<\lambda^+$,
such that the isomorphism  $\pi_\alpha:\otp(C_\alpha)\rightarrow\nacc(C_\alpha)$
is an element of $\prod_{i<\theta}A_i$.
\end{note}
Of course, $\sc_\lambda$ implies $\sc_\lambda(E^{\lambda^+}_\theta)$
for every regular $\theta<\lambda$, but more importantly, it provides us with a much better control on the non-accumulation points of its components.
For instance, if $\overrightarrow C$ is a $\sc_\lambda$-sequence for a singular cardinal $\lambda$,
then for every continuous cofinal function $f:\cf(\lambda)\rightarrow\reg(\lambda)$, and every club $D\s\lambda^+$,
there exists some $\alpha\in E^{\lambda^+}_{\cf(\lambda)}$ such that $C_\alpha\s D$,
and moreover $\cf(C_\alpha(i))=f(i)$ for all nonzero $i\in\cf(\lambda)$.\footnote{
The above sort of \emph{club guessing} has been studied in \cite{sh:535},\cite{sh:g}
in relation with the theory of strong colorings (but with no
relation to $\square_\lambda$-sequences).
In an upcoming paper \cite{ostwalks}, we shall demonstrate that $\fsc_\lambda$ yields strong colorings in a simply definable way.}

Now, the main result of this paper reads as follows:
\begin{THMB}  $\square_\lambda$ implies $\sc_\lambda$, provided that:
\begin{itemize}
\item $\lambda$ is a limit uncountable cardinal, and $\lambda^\lambda=\lambda^+$;
\item $\lambda$ is a successor cardinal, and $\lambda^{<\lambda}<\lambda^\lambda=\lambda^+$.
\end{itemize}
\end{THMB}

In particular, $\gch$ implies that $\square_\lambda$ and $\sc_\lambda$ are equivalent,
for every uncountable cardinal $\lambda$.

We expect the above finding to admit many applications. In this paper, we present an application to the theory of trees,
obtaining the first example of an homogenous $\lambda^+$-Souslin tree,
for $\lambda$ which is a singular cardinal.
\begin{THMC} Suppose that $\square_\lambda$ holds for a given singular cardinal $\lambda$.

If $\lambda^{<\cf(\lambda)}<\lambda^\lambda=\lambda^+$,
then there exists an homogenous $\lambda^+$-Souslin tree,
which is moreover $\cf(\lambda)$-complete.
\end{THMC}

To conclude the introduction, we mention that our new principle is tightly related to
the well-known concept of ``square with a built-in diamond'' \cite{gray}, that unlike $\sc_\lambda$,
involves the existence of \emph{two} sequences --- a square sequence, and a diamond sequence.
We shall also consider here a parameterized version of this concept, and study its validity.
\begin{defn}\label{31} For an uncountable cardinal $\lambda$,
and a subset $\Gamma\s\lambda^+$, $\sd_\lambda^\Gamma$ asserts the existence of a $\square_\lambda$-sequence $\langle C_\alpha\mid\alpha<\lambda^+\rangle$,
and an additional sequence $\langle S_\alpha\mid \alpha<\lambda^+\rangle$ that interact in the following way:
\begin{enumerate}
\item if $\alpha\in\acc(\lambda^+)$ and  $\beta\in\acc(C_\alpha)$, then 
$S_\beta=S_\alpha\cap\beta$;
\item for every club $D\s\lambda^+$, every subset $A\s\lambda^+$, and every nonzero limit $\theta\in\Gamma$,
there exists some $\alpha<\lambda^+$ such that:
\begin{enumerate}
\item $C_\alpha\s D$;
\item $S_\alpha=A\cap\alpha$;
\item $\cf(\alpha)=\theta$;
\item $\sup(\acc(C_\alpha))=\alpha$.
\end{enumerate}
\end{enumerate}
\end{defn}

Define the variation $\sd_\lambda(S)$ in the obvious way. Then, it will be proved:

\begin{THMD} For an uncountable cardinal $\lambda$, the following are equivalent:
\begin{itemize}
\item $\square_\lambda+\ch_\lambda$;
\item $\sd^{\Gamma}_{\lambda}$, for $\Gamma=\reg(\lambda)$;
\item $\sd_\lambda(S)$, for every stationary  $S\s E^{\lambda^+}_{>\omega}\cap E^{\lambda^+}_{\not=\cf(\lambda)}$;
\item $\sd_\lambda(S)$, for every $S\s \lambda^+$ that reflects stationarily often.
\end{itemize}
\end{THMD}

Let us mention that in \cite{sh221}, Abraham, Shelah and Solovay  proved
that if $\ch_\lambda+\square_\lambda$ holds for a cardinal $\lambda$ which is \emph{singular strong limit},
then a certain approximation of $\sd^{\reg(\lambda)}_\lambda$, which they denote by $\sd(\lambda^+)$, holds. 
Thus,  as in the first example, the above theroem happens to apply to  all the relevant cardinals,
rather than just singular strong limits. This finding is somewhat unexpected, as previously such principles were known to be valid
only in the context of fine structure (for a very recent example, see \cite{Kypriotakis}).

Note also that Theorem D is optimal, as
the model of \cite{kingsteinhorn}
witnesses that $\textsf{GCH}+\square_{\lambda}$ is consistent with the failure of $\sd_{\lambda}^\Gamma$ for $\Gamma=\reg(\lambda^+)=\{\aleph_0,\aleph_1\}$,
hence we must indeed restrict ourselves to $\Gamma=\reg(\lambda)$.
On another front, forcing over $L$ with the poset from  \cite[$\S2$]{sh186} witnesses that $\textsf{GCH}+\square_{\aleph_\omega}$ is consistent with the failure
of $\sd_{\aleph_\omega}(S)$ for some non-reflecting stationary $S\s E^{\aleph_{\omega+1}}_\omega$,
hence we must restrict ourselves either to $S\s E^{\lambda^+}_{\not=\cf(\lambda)}$, or to reflecting stationary sets.
Finally, note that requirement (2)(d) of Definition \ref{31} put some obvious restrictions on stationary subsets of $E^{\lambda^+}_\omega$.

\subsection*{Organization of this paper} In Section 2, we study principles of square with built-in diamonds.
In Section 3, we study the Ostaszewski square. In Section 4, we provide a construction of an homogenous $\lambda^+$-Souslin tree
from a particular form of the Ostaszewski square which we denote by $\sc_{\lambda,\kappa}^{\Gamma,\mu}$.
Finally, in Section 5, we provide proofs for Theorems A--D,
based on the results of sections 2--4.

The paper is concluded with Section 6, in which we make some additional remarks,
and pose a few questions.

\subsection*{Notation and Conventions}
We abbreviate by  $\ch_\lambda$ the local Continuum Hypothesis for $\lambda$,
namely,  that $2^\lambda=\lambda^+$.
Denote $E^{\delta}_\kappa:=\{\alpha<\delta\mid \cf(\alpha)=\kappa\}$. The set $E^{\delta}_{>\kappa}$
is defined in a similar way.
Denote $\reg(\lambda):=\{ \alpha<\lambda\mid \cf(\alpha)=\alpha\ge\omega\}$.
For ordinals $\alpha,\beta$, we let $[\alpha,\beta):=\{ \gamma<\beta\mid \gamma\ge\alpha\}$.
We also define the ordinals-intervals $(\alpha,\beta]$ and $(\alpha,\beta)$ in a similar fashion.

For a set of ordinals, $A$, denote $\acc^+(A):=\{ \alpha<\sup(A)\mid \sup(A\cap\alpha)=\alpha\}$,
$\acc(A):=\acc^+(A)\cap A$, and $\nacc(A):=A\bks\acc(A)$.
If $i<\otp(A)$, we sometime let $A(i)$ denote the $i_{th}$ element of $A$.
The set $A$ is said to be \emph{closed} if $\acc(A)=\acc^+(A)$, it is a \emph{club in $\alpha$}
if it is closed and $A$ is a cofinal subset of $\alpha$.
It is \emph{stationary in $\alpha$} if it meets every club in $\alpha$.
Finally, $S\s\kappa$ is said to reflect stationarily often,
if $\{\alpha\in E^{\kappa}_{>\omega}\mid S\cap\alpha\text{ is stationary in }\alpha\}$
is stationary in $\kappa$.

A \emph{tree} is a partially ordered set $\langle T,<\rangle$ such that $x_{\downarrow}:=\{ y\in T\mid y< x\}$
is well-ordered by $<$ for all $x\in T$.
The \emph{height} of a node $x\in T$ is defined as $\h(x):=\otp(x_\downarrow,<)$.
The height of the whole tree is defined as $\h(T):=\sup\{ \h(x)\mid x\in  T\}$.
Denote $x^{\uparrow}:=\{ y\in T\mid x< y\}$. The tree is said to be \emph{homogenous} provided $x^\uparrow$ and $y^\uparrow$ are isomorphic
for every $x,y\in T$ with $\h(x)=\h(y)$.

A subset $C\s T$ is a \emph{chain} if $C$ is linearly-ordered by $<$.
The tree is said to be \emph{$\kappa$-complete} if for every chain $C\s T$ of size $<\kappa$,
there exists some $x\in T$ such that $C\s x_\downarrow$.
A subset $A\s T$ is an \emph{antichain}
if $x_\downarrow\cap y_\downarrow=\emptyset$ for all distinct $x,y\in A$.
Finally, a \emph{$\lambda^+$-Souslin tree} is a tree $\langle T,<\rangle$ of height $\lambda^+$
such that any $B\s T$ of size $\lambda^+$ is neither a chain, nor an antichain.

\npg\section{Square with built-in diamond}\label{sec2}
For brevity, we shall further say that $\langle (C_\alpha,S_\alpha)\mid \alpha<\lambda^+\rangle$
is a $\sd_\lambda^\Gamma$-sequence, if the two sequences $\langle C_\alpha\mid \alpha<\lambda^+\rangle$,
$\langle S_\alpha\mid \alpha<\lambda^+\rangle$, are as in Definition \ref{31}.

\begin{thm}\label{61} Suppose that $\lambda$ is an uncountable cardinal which is not the successor of a regular cardinal.

If $\square_\lambda+\ch_\lambda$ holds, then $\sd^{\Gamma}_{\lambda}$
is valid for some cofinal $\Gamma\s\reg(\lambda)$.
\end{thm}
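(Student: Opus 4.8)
The plan is to construct the two sequences $\langle C_\alpha\mid\alpha<\lambda^+\rangle$ and $\langle S_\alpha\mid\alpha<\lambda^+\rangle$ by a single recursion, letting $\ch_\lambda$ drive a diamond-style guessing and letting the coherence of $\square_\lambda$ \emph{propagate} each guess upward. First observe the role of the hypothesis on $\lambda$: since $\lambda$ is uncountable and not the successor of a regular cardinal, $\reg(\lambda)$ is cofinal in $\lambda$, so I may fix a cofinal $\Gamma\s\reg(\lambda)$ --- this is exactly what allows clause (2)(c) to range over a cofinal set of regular $\theta<\lambda$. Fix a $\square_\lambda$-sequence to serve as a coherence backbone. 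Since $2^\lambda=\lambda^+$, Shelah's theorem supplies $\diamondsuit_{\lambda^+}(E^{\lambda^+}_\omega)$ whenever $\cf(\lambda)\neq\omega$; when $\cf(\lambda)=\omega$ I would instead extract from $\square_\lambda$ a non-reflecting stationary $S^*\s E^{\lambda^+}_\omega$ and invoke the fact that $2^\lambda=\lambda^+$ yields $\diamondsuit_{\lambda^+}(S^*)$. In either case I obtain a guessing sequence on the relevant cofinality-$\omega$ points that predicts \emph{pairs} $\langle(\bar D_\delta,\bar A_\delta)\rangle$.

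The first structural point is that clause (1) leaves almost no freedom. If $\beta<\beta'$ both lie in $\acc(C_\alpha)$ then $\beta\in\acc(C_{\beta'})$, so coherence forces $S_\beta=S_{\beta'}\cap\beta$; hence $\langle S_\beta\mid\beta\in\acc(C_\alpha)\rangle$ is $\s$-increasing, and whenever $\sup(\acc(C_\alpha))=\alpha$ clause (1) dictates $S_\alpha=\bigcup_{\beta\in\acc(C_\alpha)}S_\beta$. Since a club in an ordinal of uncountable cofinality carries a club of accumulation points, the value $S_\alpha$ is \emph{forced} at every $\alpha$ with $\cf(\alpha)>\omega$; the only genuinely free choices occur at the ``generators'' --- the $\delta$ with $\cf(\delta)=\omega$ and $\acc(C_\delta)$ bounded below $\delta$. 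I would therefore define $S_\alpha$ by recursion: set $S_\alpha:=\bigcup_{\beta\in\acc(C_\alpha)}S_\beta$ whenever $\sup(\acc(C_\alpha))=\alpha$, and at a generator $\delta$ consult the guessing sequence, declaring $S_\delta:=\bar A_\delta\cap\delta$ precisely when the forced initial part $\bigcup_{\beta\in\acc(C_\delta)}S_\beta$ already agrees with $\bar A_\delta\cap\sup(\acc(C_\delta))$, and making an arbitrary coherent choice otherwise. Clause (1) is then preserved at every stage by construction.

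The engine of the verification is the following propagation: if cofinally many $\beta\in\acc(C_\alpha)$ satisfy $S_\beta=A\cap\beta$, then for an arbitrary $\beta\in\acc(C_\alpha)$, choosing such a good $\beta'>\beta$ gives $S_\beta=S_{\beta'}\cap\beta=(A\cap\beta')\cap\beta=A\cap\beta$, so in fact $S_\beta=A\cap\beta$ for \emph{all} $\beta\in\acc(C_\alpha)$, whence $S_\alpha=\bigcup_{\beta\in\acc(C_\alpha)}(A\cap\beta)=A\cap\alpha$. Thus, writing $T_A:=\{\delta\mid S_\delta=A\cap\delta\}$, clause (2) reduces to finding, for each club $D$, each $A\s\lambda^+$, and each $\theta\in\Gamma$, an $\alpha\in E^{\lambda^+}_\theta$ with $C_\alpha\s D$, $\sup(\acc(C_\alpha))=\alpha$, and $T_A\cap\acc(C_\alpha)$ cofinal in $\alpha$. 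To arrange this I would build the \emph{square} sequence in the same diamond-driven recursion: at a stage where $\bar D_\delta=D\cap\delta$ is a correct guess, steer the blocks of $C$ through $D$ and plant the coherent guess $S_\delta=\bar A_\delta\cap\delta$ at the generator, so that the $A$-good generators are exactly the accumulation points of the clubs aimed at $D$. Then, at a target $\alpha$ of cofinality $\theta\in\Gamma$ accumulated from such stages, $\acc(C_\alpha)$ threads through $T_A$ cofinally, and propagation delivers $S_\alpha=A\cap\alpha$ while $C_\alpha\s D$.

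The main obstacle is exactly this \emph{simultaneity}. Because $S_\alpha$ is forced at every target of uncountable cofinality, one cannot simply ``write $A\cap\alpha$'' there: the guess must be assembled from below along $\acc(C_\alpha)$, while at the same time $C_\alpha$ must be threaded into the prescribed $D$ in a way compatible with the rigid coherence of $\square_\lambda$. Reconciling the freedom needed to catch \emph{every} pair $(D,A)$ against the inflexibility of the coherent square --- ensuring that the single $\diamondsuit_{\lambda^+}$-sequence seeds $A$-guesses at cofinally many accumulation points of a club steered into $D$ whose supremum has the prescribed cofinality $\theta$ --- is the delicate heart of the argument, and it is sharpened in the case $\cf(\lambda)=\omega$, where the generators sit at the critical cofinality and $T_A$ need not reflect, so one genuinely must route $C_\alpha$ rather than appeal to ``club meets stationary''. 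Once the generators have been seeded coherently, however, the upward propagation along $\acc(C_\alpha)$ does the remaining work essentially for free.
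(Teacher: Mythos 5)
There is a genuine gap here, in fact two. First, your treatment of the case $\cf(\lambda)=\omega$ rests on a false lemma. When $\lambda$ is singular of countable cofinality (a core case of the theorem, since such a $\lambda$ is not the successor of a regular), you propose to extract a non-reflecting stationary $S^*\s E^{\lambda^+}_\omega$ from $\square_\lambda$ and ``invoke the fact that $2^\lambda=\lambda^+$ yields $\diamondsuit(S^*)$''. No such fact exists: Shelah's theorem \cite{sh922} gives $\diamondsuit(S)$ from $\ch_\lambda$ only for stationary $S\s E^{\lambda^+}_{\not=\cf(\lambda)}$, and on the critical cofinality it consistently fails --- the paper itself notes that forcing over $L$ with the uniformization poset of \cite[$\S2$]{sh186} produces a model of $\gch+\square_{\aleph_\omega}$ in which $\sd_{\aleph_\omega}(S)$ fails for some non-reflecting stationary $S\s E^{\aleph_{\omega+1}}_\omega$, and that forcing kills $\diamondsuit(S)$ itself. (A smaller slip of the same kind: $\reg(\lambda)$ is \emph{not} cofinal in $\lambda$ when $\lambda$ is the successor of a singular cardinal, e.g.\ $\lambda=\aleph_{\omega+1}$; ``cofinal $\Gamma\s\reg(\lambda)$'' in the statement means unbounded in $\sup(\reg(\lambda))$, and in the paper $\Gamma$ is not prescribed in advance but is produced by a Fodor argument.)

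Second, and more fundamentally, your upward-propagation reduction is correct but is the easy half, and the step you describe as ``steer the blocks of $C$ through $D$'' is precisely the content of the theorem, left unconstructed. In a diamond-driven recursion, the set of stages $\delta$ at which the guess $(\bar D_\delta,\bar A_\delta)$ equals $(D\cap\delta,A\cap\delta)$ is merely stationary in $\lambda^+$; a stationary subset of $E^{\lambda^+}_\omega$ need not meet, let alone be cofinal in, $\acc(C_\alpha)$ for any particular $\alpha$ of cofinality $\theta$, and coherence forbids re-routing after the fact: $C_\alpha$ is the union of the clubs $C_\beta$, $\beta\in\acc(C_\alpha)$, which were fixed at earlier stages in response to guesses of \emph{other} pairs. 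You name this circularity (``the delicate heart'') but supply no mechanism to resolve it. The paper resolves it by abandoning pointwise seeding altogether: Fodor's lemma stabilizes $(\cf(\alpha),\otp(C_\alpha))$ on stationary sets $T_i\s E^{\lambda^+}_{>\omega}\cap E^{\lambda^+}_{\not=\cf(\lambda)}$, so that all guessing points have uncountable, non-critical cofinality; the Abraham--Magidor club-guessing theorem combined with the collapse through $D^*$ yields a coherent sequence satisfying $C^*_\alpha\s D$ for stationarily many $\alpha\in T_i$, for every club $D$ (Claims \ref{322} and \ref{433}); and the $A$-guessing is then obtained not from a $\diamondsuit$-sequence at all, but from a Shelah-style counting argument (Claim \ref{437}) run along the coherent matrices $H^j_\alpha$ of Claim \ref{436}, which produces a single triple $(j,\tau,Y)$ working uniformly for all pairs $(D,Z)$. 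Without a substitute for these three steps, your outline does not close.
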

\begin{proof} Fix a $\square_\lambda$-sequence, $\langle C_\alpha\mid \alpha<\lambda^+\rangle$
with  $C_{\alpha+1}=\emptyset$ for all $\alpha<\lambda^+$.

We shall reach our goal gradually, where we first define a $\square_\lambda$-sequence $\langle C_\alpha'\mid \alpha<\lambda^+\rangle$
that has a nice partition property with respect to some cofinal $\Gamma\s\reg(\lambda)$ (very much like in \cite{sh221}),
then we continue to define a $\square_\lambda$-sequence $\langle C^*_\alpha\mid \alpha<\lambda^+\rangle$
that guesses clubs, and finally we shall obtain a sequence $\langle C^\bullet_\alpha\mid \alpha<\lambda^+\rangle$
that is ready for $\sd^\Gamma_\lambda$.

Put $\kappa:=\sup(\reg(\lambda))$. Then $\lambda\in\{\kappa,\kappa^+\}$, and $\kappa$ is a limit ordinal with $\cf(\kappa)\le\cf(\lambda)$.
Let $\{ \kappa_i\mid i<\cf(\kappa)\}$ denote the increasing enumeration of some cofinal subset of $\kappa$.

We commence with defining a sequence  $\langle(\gamma_i,\lambda_i,T_i)\mid i<\cf(\kappa)\rangle$ by induction on $i<\cf(\kappa)$:

Let $\gamma_0:=\omega$, and $\gamma_{j}:=\sup_{i<j}\gamma_j$ for a limit $j$.
Now, suppose that $\gamma_i$ is defined for a given  $i<\cf(\kappa)$,
and let us define $\gamma_{i+1}$, as well as $\lambda_i$ and $T_i$.

Consider the regressive function $f_{i}:E^{\lambda^+}_{>\max\{\gamma_i,\kappa_i\}}\cap E^{\lambda^+}_{\not=\cf(\lambda)}\rightarrow\lambda\times\lambda$ satisfying:
$$f_{i+1}(\alpha):=(\cf(\alpha),\otp(C_\alpha)),$$
and then find a stationary set $T_i\s\dom(f_{i})$ and $\lambda_i,\gamma_{i+1}$ such that
 $f_i(\alpha)=(\lambda_i,\gamma_{i+1})$ for all $\alpha\in T_i$.
Note that $\gamma_i<\lambda_i\le\gamma_{i+1}<\lambda$.

Evidently, $\{\gamma_i\mid i<\cf(\kappa)\}$ is a club in $\kappa$,
and $\Gamma:=\{\lambda_i\mid i<\cf(\kappa)\}$ is a cofinal subset of $\reg(\lambda)$.
Pick a set of ordinals $\{ \gamma_i\mid \cf(\kappa)\le i<\cf(\lambda)\}\s[\kappa,\lambda)$ such that $\{ \gamma_i\mid i<\cf(\lambda)\}$ is a club in $\lambda$.
Denote $\gamma_{\cf(\lambda)}=\lambda$.

Denote $\Gamma_0:=\omega$. For all nonzero limit $i\le\cf(\lambda)$, denote $\Gamma_i:=\{\gamma_j\mid j<i\}$.
For all $i<\cf(\lambda)$, pick some club $\Omega_{i+1}\s\gamma_{i+1}$ of minimal order-type with $\min(\Omega_{i+1})=\gamma_i+1$.
Next, for all limit $\epsilon\le\lambda$, we define the club $E_\epsilon\s\epsilon$ by letting:
$$E_\epsilon:=\begin{cases}
\Gamma_i,&\epsilon=\gamma_i\ \&\ i\text{ is limit},\\
\Omega_{i+1}\cap\epsilon,&\epsilon\in(\gamma_i,\gamma_{i+1}]\ \&\ \epsilon\in\acc(\Omega_{i+1})\cup\{\gamma_{i+1}\}\\
\epsilon\bks\sup(\Omega_{i+1}\cap\epsilon),&\epsilon\in(\gamma_i,\gamma_{i+1})\ \&\ \epsilon\not\in\acc(\Omega_{i+1})
\end{cases}.$$

Now, given $C\s\lambda^+$, let $\epsilon_C:=\otp(C)$ and $\pi_C:\epsilon_C\rightarrow C$ denote the inverse collapse.
Next, for all limit $\alpha<\lambda^+$, put $C_\alpha':=\pi_{C_\alpha}``E_{\epsilon_{C_\alpha}}$.
Then $C_\alpha'$ is a subclub of $C_\alpha$ and in particular, $\acc(C_\alpha')\s\acc(C_\alpha)$.

\begin{claim}\label{321}\begin{enumerate}
\item if $\alpha\in E^{\lambda^+}_{<\lambda}$, then $\otp(C_\alpha')<\lambda$.
\item If $\alpha<\lambda^+$ and $\beta\in\acc(C_\alpha')$, then $C_\alpha'\cap\beta=C'_\beta$.
\end{enumerate}
\end{claim}
\begin{proof}
(1) As $\otp(C'_\alpha)\le\otp(C_\alpha)\le \lambda$ and $\cf(\otp(C'_\alpha))=\cf(\alpha)$ for all $\alpha<\lambda^+$,
if $\lambda$ is regular then we trivially get that $\otp(C'_\alpha)<\lambda$ for all $\alpha\in E^{\lambda^+}_{<\lambda}$.
Next, assume towards a contradiction that $\lambda$ is singular
and $\otp(C'_\alpha)=\lambda$ for some fixed $\alpha\in E^{\lambda^+}_{<\lambda}$.
It then follows $\otp(C_\alpha)=\lambda$,
and hence $\otp(C'_\alpha)=\otp(\pi_{C_\alpha}``\Gamma_i)=\otp(\Gamma)=\cf(\lambda)<\lambda$.
This is a contradiction.

(2)
Put $\beta':=\pi_{C_\alpha}^{-1}(\beta)$. Then $C_\alpha'\cap\beta=\pi_{C_\alpha}``(E_{\epsilon_{C_\alpha}}\cap\beta')$.
As $\beta\in\acc(C_\alpha')$, we have $\beta\in\acc(C_\alpha)$, hence, $C_\beta=C_\alpha\cap\beta$
and $\pi_{C_\beta}\s\pi_{C_\alpha}$. In particular, $C_\alpha'\cap\beta=\pi_{C_\beta}``(E_{\epsilon_{C_\alpha}}\cap\beta')$,
and since $\sup(C_\alpha'\cap\beta)=\beta$, we infer that $\dom(\pi_{C_\beta})=\beta'$.

Denote $\epsilon:=\epsilon_{C_\alpha}$ and $\varepsilon:=\beta'$, that is, $\varepsilon=\epsilon_{C_\beta}$.
Let us examine three cases:

$\blacktriangleright$  If $\epsilon=\gamma_i$ for some limit $i<\cf(\lambda)$, then $E_\epsilon=\Gamma_i$ for this $i$,
and so the fact that $\beta\in\acc(C_\alpha')$ implies that $\beta'\in\acc(\Gamma_i)$.
Pick a limit $j<i$ such that $\beta'=\gamma_j$,
then $E_{\varepsilon}=\Gamma_j$, and
$$C_\beta'=\pi_{C_\beta}``\Gamma_j=\pi_{C_\alpha}``\Gamma_j=\pi_{C_\alpha}``(\Gamma_i\cap\beta')=\pi_{C_\alpha}``(\Gamma_i)\cap\pi_{C_\alpha}(\beta')=C_\alpha'\cap\beta.$$

$\blacktriangleright$ if $\epsilon\in(\gamma_i,\gamma_{i+1}]\ \&\ \epsilon\in\acc(\Omega_{i+1})\cup\{\gamma_{i+1}\}$,
then $E_\epsilon=\Omega_{i+1}\cap\epsilon$ and $\beta'\in\acc(\Omega_{i+1})$.
In particular, $\varepsilon\in(\gamma_i,\gamma_{i+1}]$ and $\varepsilon\in\acc(\Omega_{i+1})$,
so $E_{\varepsilon}=\Omega_{i+1}\cap\varepsilon$, and
$$C_\beta'=\pi_{C_\beta}``(\Omega_{i+1}\cap\varepsilon)=\pi_{C_\alpha}``(\Omega_{i+1}\cap\epsilon)\cap\pi_{C_\alpha}(\varepsilon)=C'_\alpha\cap\beta.$$

$\blacktriangleright$ if $\epsilon\in(\gamma_i,\gamma_{i+1})\ \&\ \epsilon\not\in\acc(\Omega_{i+1})$,
then $E_\epsilon=\epsilon\bks\sup(\Omega_{i+1}\cap\epsilon)$, and hence $\varepsilon=\beta'$ is an
element of $(\sup(\Omega_{i+1}\cap\epsilon),\epsilon)$. So, $\varepsilon\not\in\Omega_{i+1}$,
and $\gamma_i+1=\min\Omega_{i+1}<\varepsilon<\epsilon\le\gamma_{i+1}$.

It follows that $E_\varepsilon=\varepsilon\bks(\Omega_{i+1}\cap\varepsilon)$,
and since $\varepsilon\in(\sup(\Omega_{i+1}\cap\epsilon,\epsilon)$, we actually get that $E_\varepsilon=\varepsilon\bks(\Omega_{i+1}\cap\epsilon)$.
Altogether, $C_\beta'=C_\alpha'\cap\beta$.
\end{proof}

For all $i<\cf(\kappa)$, pick a club $D_i\s\lambda^+$, such that for every club $D\s\lambda^+$,
there exists some $\alpha\in T_i\cap\acc(D_i)$ such that $C_\alpha'\cap D_i\s D$.%
\footnote{This is possible since $T_i\s E^{\lambda^+}_{>\omega}\cap E^{\lambda^+}_{<\lambda}$, and $\otp(C_\alpha')<\lambda$ for all $\alpha\in T_i$. See Theorem 2.17 in \cite{abraham-magidor}.}
Put $D^*:=\bigcap_{i<\cf(\kappa)}D_i$,
and let $\pi:=\pi_{D^*}$, that is, $\pi:\lambda^+\rightarrow D^*$ is the inverse collapse.

For all limit $\alpha<\lambda^+$,
let $c_\alpha$ be some club subset of $\alpha$ of minimal order-type, and put:
$$C_\alpha^*:=\begin{cases}\pi^{-1}``C_{\pi(\alpha)}',&\sup(C_{\pi(\alpha)}'\cap D^*)=\pi(\alpha)\\
c_\alpha,&\text{otherwise}\end{cases}.$$

Note that if $\alpha\in E^{\lambda^+}_{>\omega}$, then $\pi(\alpha)\cap D^*$ is a club in $\pi(\alpha)$, and hence the ``otherwise''
case can only occur for $\alpha\in E^{\lambda^+}_\omega$.

\begin{claim}\label{322} $\langle C^*_\alpha\mid \alpha<\lambda^+\rangle$ is a $\square_\lambda$-sequence.

Moreover, $\otp(C_\alpha^*)<\lambda$ for all $\alpha\in E^{\lambda^+}_{<\lambda}$.
\end{claim}
\begin{proof} Since $\pi$ is continuous, it is obvious that $C_\alpha^*$
is a club subset of $\alpha$ for all limit $\alpha<\lambda^+$.
We also have $\otp(C_\alpha^*)\le\otp(C_{\pi(\alpha)}')\le\otp(C_{\pi(\alpha)})$.
So, if $\cf(\alpha)<\lambda$, then $\otp(C_\alpha^*)<\lambda$, and otherwise $\otp(C_\alpha^*)=\cf(\pi(\alpha))=\lambda$.

Next, suppose that $\alpha<\lambda^+$ is a limit ordinal and that $\beta\in\acc(C_\alpha^*)$.

$\blacktriangleright$ If $\sup(C_{\pi(\alpha)}'\cap D^*)=\pi(\alpha)$, then $\pi(\beta)\in\acc(C'_{\pi(\alpha)}\cap D^*)$. That is:
\begin{itemize}
\item $\sup(C_{\pi(\beta)}'\cap D^*)=\pi(\beta)$, and $C^*_\beta=\pi^{-1}``C'_{\pi(\beta)}$;
\item $C_{\pi(\alpha)}'\cap\pi(\beta)=C_{\pi(\beta)}'$, and hence
\item $C_\alpha^*\cap\beta=C_\beta^*$.
\end{itemize}

$\blacktriangleright$ If $\sup(C_{\pi(\alpha)}'\cap D^*)<\pi(\alpha)$,
then  $\cf(\alpha)=\omega$ and $\otp(C^*_\alpha)=\otp(c_\alpha)=\omega$. In particular, $\acc(C_\alpha^*)=\emptyset$,
and this case is of no interest.
\end{proof}

\begin{claim}\label{433}
For every club $D\s\lambda^+$ and every $i<\cf(\kappa)$,
the set $$\{ \alpha\in T_i\mid C_\alpha^*\s C_\alpha'\ \&\ C_\alpha^*\s D\}$$ is stationary.
\end{claim}
\begin{proof} Suppose that $D,E$ are club subsets of $\lambda^+$,
and that $i<\cf(\kappa)$. Our aim is to find an $\alpha\in T_i$ such that $C_\alpha^*\s C_\alpha'$, $C_\alpha^*\s D$, and $\alpha\in E$.

Consider the next club:
$$D':=\{\alpha\in D\cap E\mid \pi(\alpha)=\alpha\}.$$

By the choice of $D_i$,
we may pick some $\alpha\in T_i\cap\acc(D_i)$ such that $C_\alpha'\cap D_i\s D'$.
By $\alpha\in T_i$, we get that $\cf(\alpha)=\lambda_i>\gamma_0=\omega$,
so the fact that $\alpha\in\acc(D_i)$, implies that $\sup(C_\alpha'\cap D_i)=\alpha$.
Since $C_\alpha'\cap D_i\s D'$, we get that $\alpha\in\acc(D')\s D'$.
It follows that:
\begin{itemize}
\item $\alpha\in T_i\cap E$, because $E\s D'$;
\item $C_\alpha^*=\pi^{-1}``C_{\pi(\alpha)}'$, because $\cf(\alpha)>\omega$;
\item $\pi^{-1}``C_{\pi(\alpha)}'=\pi^{-1}C'_{\alpha}$, because $\pi(\alpha)=\alpha$;
\item $\pi^{-1}C_\alpha'=C_\alpha'\cap D^*$, because $\pi(\beta)=\beta$ for all $\beta\in D'\supseteq C_\alpha'\cap D_i$.
\end{itemize}

Altogether, $C^*_\alpha=C_\alpha'\cap D^*\s C_\alpha'\cap D_i\s D'$, and hence $\alpha$ is as requested.
\end{proof}

For all $i<\cf(\kappa)$, let $S_i:=\bigcup\{\acc(C_\alpha^*)\cup\{\alpha\}\mid \alpha\in T_i\ \&\ C_\alpha^*\s C_\alpha'\}$.

\begin{claim}\label{432} For all  $i<j<\cf(\kappa)$, $S_i\cap S_j=\emptyset$.
\end{claim}
\begin{proof} Suppose not. Pick $\alpha_i\in T_i$, $\alpha_j\in T_j$ such that
$(\acc(C_{\alpha_i}^*)\cup\{\alpha_{i}\})\cap (\acc(C_{\alpha_j}^*)\cup\{\alpha_{j}\})\not=\emptyset$,
and let $\beta$ be some element of this nonempty set.
In particular, $\beta\in\acc(C_{\alpha_i})\cup\{\alpha_i\}$, and hence $\cf(\beta)\le\otp(C_\beta)\le\otp(C_{\alpha_i})=\gamma_{i+1}$.\footnote{%
Recall that $\otp(C_\alpha)=\gamma_{i+1}$ and $\cf(\alpha)=\lambda_i$ for all $\alpha\in T_i$.}
Since $\cf(\alpha_j)=\lambda_j>\gamma_{i+1}$, we get that $\beta\not=\alpha_j$,
and so it must be the case that $\beta\in\acc(C'_{\alpha_j})$.
As $C_\beta\s C_{\alpha_j}$ and $\otp(C_\beta)\le\gamma_{i+1}$, we get that $\pi_{C_{\alpha_j}}^{-1}(\beta)\le\gamma_{i+1}$.
By $C_{\alpha_j}'=\pi_{C_{\alpha_j}}``E_{\epsilon_{C_{\alpha_j}}}$ and $\beta\in\acc(C'_{\alpha_j})$, this means that $\gamma_{i+1}\cap E_{\epsilon_{C_{\alpha_j}}}\not=\emptyset$.
However, $\alpha_j\in T_j$, and hence $\epsilon_{C_{\alpha_j}}=\gamma_{j+1}$,
so $E_{\epsilon_{C_{\alpha_j}}}=\Omega_{j+1}$, and $\min(E_{\epsilon_{C_{\alpha_j}}})=\gamma_{j}+1>\gamma_j\ge\gamma_{i+1}$.
This is a contradiction.
\end{proof}

\begin{claim}\label{435} For every limit $\alpha<\lambda^+$, the set
$$\{ i<\cf(\kappa)\mid (\acc(C_\alpha^*)\cup\{\alpha\})\cap S_i\not=\emptyset\}$$
contains at most a single element.
\end{claim}
\begin{proof} Assume indirectly that there exist $\beta\le\gamma$ in $\acc(C^*_\alpha)\cup\{\alpha\}$,
 and $i\not=j$, such that $\beta\in S_j, \gamma\in S_i$. As $S_i\cap S_j=\emptyset$, we actually have $\beta<\gamma$.
\begin{itemize}
\item As $\gamma\in\acc(C^*_\alpha)\cup\{\alpha\}$, and $\beta\in\acc(C^*_\alpha\cap\gamma)$, we get that $\beta\in\acc(C^*_\gamma)$.

\item As $\gamma\in S_i$, we may find some $\delta\in T_i$ such that $\gamma\in\acc(C_\delta^*)\cup\{\delta^*\}$.
So $C_\gamma^*=C_\delta^*\cap\gamma$, and $\acc(C_\gamma^*)=\acc(C_\delta^*)\cap\gamma$.
\end{itemize}

Altogether  $\beta\in\acc(C_\delta^*)$ for some $\delta\in T_i$,
and hence $\beta\in S_i$. This contradicts the fact that $\beta\in S_j$.
\end{proof}

\begin{claim}\label{436} There exists a matrix $\mathcal H=\langle H_\alpha^j\mid j<\cf(\lambda), \alpha<\lambda^+\rangle$
such that for every limit $\alpha<\lambda^+$:
\begin{enumerate}
\item $\{ H_\alpha^j\mid j<\cf(\lambda)\}\s[\alpha\times\alpha]^{<\lambda}$
is an increasing chain converging to $\alpha\times\alpha$;
\item if $\beta\in\acc(C^*_\alpha)$, then $H^j_\beta=H^j_\alpha\cap(\beta\times\beta)$ for all $j<\cf(\lambda)$.
\end{enumerate}
\end{claim}
\begin{proof} By \cite{todorcevic}, we may pick a sequence of injections $\langle \rho_\alpha:\alpha\rightarrow{}^{<\omega}\lambda\mid \alpha<\lambda^+\rangle$
such that if $\beta\in\acc(C^*_\alpha)$, then $\rho_\beta=\rho_\alpha\restriction\beta$.
Let $\langle \chi_j\mid j<\cf(\lambda)\rangle$ be an increasing sequence of ordinals,
converging to $\lambda$. Then, put $$H^j_\alpha:=(\rho_\alpha^{-1}[{}^{<\omega}\chi_j])^2,\quad(j<\cf(\lambda), \alpha<\lambda^+).$$
\end{proof}

Let $\mathcal H$ be given by the preceding claim.
By $\ch_\lambda$, let $\{X_\gamma\mid \gamma<\lambda^+\}$ be an enumeration of $[\lambda\times\lambda\times\lambda^+]^{\le\lambda}$.
For all $(j,\tau)\in\lambda\times\lambda$ and $X\s\lambda\times\lambda\times\lambda^+$,
let $\pi_{j,\tau}(X):=\{\varsigma<\lambda^+\mid (j,\tau,\varsigma)\in X\}$.
\begin{claim}\label{437} Suppose that $i<\cf(\kappa)$.

There exist $(j,\tau)\in\cf(\lambda)\times\lambda$ and $Y\s\lambda^+\times\lambda^+$
such that for every club $D\s\lambda^+$ and every subset $Z\s\lambda^+$, there exists some  $\alpha\in T_i$ such that:
\begin{enumerate}
\item $C_\alpha^*\s C_\alpha'\cap D$;
\item $H^j_\alpha\bks Y\s \{(\eta,\gamma)\mid Z\cap\eta=\pi_{j,\tau}(X_\gamma)\}$;
\item $\sup\{\eta\in{C^*_\alpha}\mid (\eta,\gamma)\in H^j_\alpha\bks Y\text{ for some }\gamma\}=\alpha$.
\end{enumerate}
\end{claim}
\begin{proof} Suppose not, and let us argue as in \cite{rinot07}, which itself is based on \cite{sh922}.
We build by recursion on $\tau<\lambda$, three sequences:
\begin{enumerate}
\item[(\textrm{I})] $\langle \{Z^j_\tau\mid j<\cf(\lambda)\}\mid \tau<\lambda\rangle$;
\item[(\textrm{II})] $\langle \{D^j_\tau\mid j<\cf(\lambda)\}\mid \tau<\lambda\rangle$;
\item[(\textrm{III})] $\langle \{Y^j_\tau\mid j<\cf(\lambda)\} \mid \tau<\lambda\rangle$.
\end{enumerate}

Base case, $\tau=0$. Fix $j<\cf(\lambda)$. By the hypothesis, we may find a set $Z^j_0\s\lambda^+$,
and a club $D^j_0\s\lambda^+$ such that for all $\alpha\in T_i$,
at least one of the following fails:
\begin{enumerate}
\item $C_\alpha^*\s C_\alpha'\cap D^j_0$;
\item $H^j_\alpha\s \{(\eta,\gamma)\mid Z^j_0\cap\eta=\pi_{j,0}(X_\gamma)\}$;
\item $\sup\{\eta\in{C^*_\alpha}\mid (\eta,\gamma)\in H^j_\alpha \text{ for some }\gamma\}=\alpha$.
\end{enumerate}

Put $$Y^j_0:=\{ (\eta,\gamma)\in \lambda^+\times\lambda^+\mid Z^j_0\cap\eta\not=\pi_{j,0}(X_\gamma)\}.$$

Next, assume that the three sequences are defined up to some $\tau<\lambda$.
Fix $j<\cf(\lambda)$. By the hypothesis, we may find a set $Z^j_\tau\s\lambda^+$,
and a club $D^j_\tau\s\lambda^+$ such that for all $\alpha\in T_i$, at least one of the following fails:
\begin{enumerate}
\item $C_\alpha^*\s C_\alpha'\cap D^j_\tau$;
\item $H^j_\alpha\bks \bigcup_{\iota<\tau}Y^j_\iota\s \{(\eta,\gamma)\mid Z^j_\tau\cap\eta=\pi_{j,\tau}(X_\gamma)\}$;
\item $\sup\{\eta\in{C^*_\alpha}\mid (\eta,\gamma)\in H^j_\alpha\bks \bigcup_{\iota<\tau}Y^j_\iota \text{ for some }\gamma\}=\alpha$.
\end{enumerate}

Put $$Y^j_\tau:=\{ (\eta,\gamma)\in \lambda^+\times\lambda^+\mid Z^j_\tau\cap\eta\not=\pi_{j,\tau}(X_\gamma)\}.$$
This completes the recursive construction.

To meet a contradiction, let $Z:=\{ (j,\tau,\varsigma)\mid j<\cf(\lambda),\tau<\lambda,\varsigma\in Z^j_\tau\}$ and
define a function $f:\lambda^+\rightarrow\lambda^+$ by letting:
$$f(\eta):=\min\{\gamma<\lambda^+\mid Z\cap(\lambda\times\lambda\times\eta)=X_\gamma\},\quad(\eta<\lambda^+).$$

Put $D:=\{ \alpha\in\bigcap_{\tau<\lambda}\bigcap_{j<\cf(\lambda)}D^j_\tau\mid f``\alpha\s \alpha\}$.
By Claim \ref{433}, let us pick some $\alpha\in T_i$ such that $C_\alpha^*\s C_\alpha'\cap D$.

As $f\restriction{C^*_\alpha}\s\alpha\times\alpha$, we may define a function $g:{C^*_\alpha}\rightarrow\cf(\lambda)$ by letting;
$$g(\eta):=\min\{ j<\cf(\lambda)\mid (\eta,f(\eta))\in H^j_\alpha\},\quad(\eta\in{C^*_\alpha}).$$
As $\alpha\in T_i$, we have $\cf(\alpha)=\lambda_i\not=\cf(\lambda)$,
so let us fix some $j<\cf(\lambda)$, such that $L_j:=g^{-1}\{j\}$ is cofinal in $\alpha$.
As $f\restriction L_j\s H^j_\alpha$,
we have $(f\restriction L_j)\cap Y^j_\tau=\emptyset$ for all $\tau<\lambda$.
Since $\sup(L_j)=\alpha$, we infer that for every $\tau<\lambda$ and $j<\cf(\lambda)$,
the counterexample $(Z^j_\tau,D^j_\tau)$ is reflected at the $\alpha$ level, as the failure of clause (2).
Namely,  $\langle H_\alpha^j\bks\bigcup_{\iota<\tau}Y^j_\iota\mid 0<\tau<\lambda\rangle$
is a strictly decreasing sequence of non-empty subsets of $H^j_\alpha$.
However, this is a contradiction  to the fact that $|H^j_\alpha|<\lambda$.
\end{proof}

Let $\langle (j_i,\tau_i,Y_i)\mid i<\cf(\lambda)\rangle$ be given by the previous claim.
For all limit $\alpha<\lambda^+$, and $i<\cf(\lambda)$, let:
$$f_\alpha^i:=\{(\eta,\gamma)\in H^{j_i}_\alpha\bks Y_i\mid ((\eta,\gamma')\in H^{j_i}_\alpha\bks Y_i)\Longrightarrow \gamma'\ge\gamma\};$$
$$A^i_\alpha(\delta):=\{ \eta\in \dom(f^i_\alpha)\cap {C^*_\alpha}\cap\delta\mid f^i_\alpha(\eta)<\delta\},\quad(\delta<\alpha).$$
Then let
 $C^i_\alpha$ be the set of all $\delta<\alpha$ such that the following holds:
\begin{enumerate}
\item $\sup(A^i_\alpha(\delta))=\delta$;
\item if $\eta=\min(A^i_\alpha(\delta))$, then $\pi_{j_i,\tau_i}(X_{f^i_\alpha(\eta)})\s\eta$;
\item if $\eta'<\eta$ are in $A^i_\alpha(\delta)$, then $\pi_{j_i,\tau_i}(X_{f^i_\alpha(\eta)})\bks \pi_{j_i,\tau_i}(X_{f^i_\alpha(\eta')})\s[\eta',\eta)$.
\end{enumerate}

Note that $C^i_\alpha$ is a closed subset of $C^*_\alpha$. We also have coherence:

\begin{claim}\label{438} If $\beta\in\acc(C^*_\alpha)$, then:
\begin{itemize}
\item $C^i_{\beta}=C^i_\alpha\cap\beta$;
\item $f^i_{\beta}\restriction A^i_\beta(\delta)=f^i_{\alpha}\restriction A^i_\alpha(\delta)$ for all $\delta<\beta$.
\end{itemize}
\end{claim}
\begin{proof} If $\beta\in\acc(C^*_\alpha)$, then by Claim \ref{436}, we have $H^{j_i}_{\beta}=H^{j_i}_\alpha\cap(\beta\times\beta)$,
and hence $f^i_{\beta}=f^i_\alpha\restriction\{\eta\in \beta\mid f^i_\alpha(\eta)<\beta\}$. Also, $C^*_\beta=C^*_\alpha\cap\beta$,
so $A^i_\alpha(\delta)=A^i_\beta(\delta)$ for all $\delta<\beta$. Consequently, $C^i_{\beta}=C^i_\alpha\cap\beta$ and
$f^i_{\beta}\restriction A^i_\beta(\delta)=f^i_{\alpha}\restriction A^i_\alpha(\delta)$ for all $\delta<\beta$.
\end{proof}

Recalling Claim \ref{435}, we are now ready to define our $\sd^\Gamma_\lambda$-sequence. For all limit $\alpha<\lambda^+$, put:
$$C_\alpha^\bullet:=\begin{cases}C_\alpha^*,&(\acc(C_\alpha^*)\cup\{\alpha\})\cap\bigcup_{i<\cf(\kappa)}S_i=\emptyset\\
C_\alpha^i\cup (C_\alpha^*\bks\sup(C_\alpha^i)),&(\acc(C_\alpha^*)\cup\{\alpha\})\cap S_i\not=\emptyset
\end{cases}$$

and
$$Z_\alpha:=\begin{cases}\emptyset,&(\acc(C_\alpha^*)\cup\{\alpha\})\cap\bigcup_{i<\cf(\kappa)}S_i=\emptyset\\
\bigcup\{\pi_{j_i,\tau_i}(X_{f^i_\alpha(\eta)})\mid \eta\in A^i_\alpha(\delta),\delta\in C^i_\alpha\},&(\acc(C_\alpha^*)\cup\{\alpha\})\cap S_i\not=\emptyset
\end{cases}$$

So $C_\alpha^\bullet$ is a subclub of $C_\alpha^*$, and $Z_\alpha\s\alpha$.
In particular, $\otp(C_\alpha^\bullet)<\lambda$ whenever $\alpha\in E^{\lambda^+}_{<\lambda}$.

\begin{claim} $\langle (C_\alpha^\bullet,Z_\alpha)\mid\alpha<\lambda^+\rangle$ is a $\sd^\Gamma_\lambda$-sequence.
\end{claim}
\begin{proof} First, suppose that $\beta\in\acc(C^\bullet_\alpha)$, and let us show that $C_\beta^\bullet=C_\alpha^\bullet\cap\beta$ and $S_\beta=S_\alpha\cap\beta$.
Note that $\beta\in\acc(C^*_\alpha)$, and hence $C^*_\beta=C^*_\alpha\cap\beta$.

We now consider two cases:

$\blacktriangleright$ If $(\acc(C_\alpha^*)\cup\{\alpha\})\cap\bigcup_{i<\cf(\kappa)}S_i=\emptyset$,
then $(C_\alpha^\bullet,Z_\alpha)=(C_\alpha^*,\emptyset)$ and $(C^*_\beta\cup\{\beta\})\cap\bigcup_{i<\cf(\kappa)}S_i=\emptyset$,
so $C_\beta^\bullet=C_\beta^*=C^*_\alpha\cap\beta=C^\bullet_\alpha\cap\beta$, and $Z_\beta=\emptyset=Z_\alpha\cap\beta$.

$\blacktriangleright$ If $(\acc(C_\alpha^*)\cup\{\alpha\})\cap S_i\not=\emptyset$,
then there exists some $\delta\in T_i$ such that $\alpha\in(\acc(C^*_\delta)\cup\{\delta\})\s S_i$.
In particular, $C^*_\alpha=C^*_\delta\cap\alpha$, so $\beta\in\acc(C^*_\delta)$,
and $\beta\in S_i$. It follows that:
\begin{itemize}
\item $C^\bullet_\alpha=C_\alpha^i\cup (C_\alpha^*\bks\sup(C_\alpha^i))$, and $C^\bullet_\beta=C_\beta^i\cup (C_\beta^*\bks\sup(C_\beta^i))$;
\item $Z_\alpha=\bigcup\{\pi_{j_i,\tau_i}(X_{f^i_\alpha(\eta)})\mid \eta\in A^i_\alpha(\delta),\delta\in C^i_\alpha\}$, and
\item $Z_\beta=\bigcup\{\pi_{j_i,\tau_i}(X_{f^i_\beta(\eta)})\mid \eta\in A^i_\beta(\delta),\delta\in C^i_\beta\}$.
\end{itemize}
Note that by Claim \ref{438}, we get that $C^i_\beta=C^i_\alpha\cap\beta$.
If $\sup(C^i_\alpha)\ge\beta$, then $C_\alpha^\bullet\cap\beta=C^i_\alpha\cap\beta=C^i_\beta=C^\bullet_\beta$.
If $\sup(C^i_\alpha)<\beta$, then letting $\gamma:=\sup(C^i_\alpha)$, we get that
$C^\bullet_\alpha=(C^i_\alpha\cap\beta)\cup(C^*_\alpha\bks\gamma)=C^i_\beta\cup(C^*_\alpha\bks\gamma)$,
and hence $C^\bullet_\alpha\cap\beta=C^i_\beta\cup(C^*_\beta\bks\gamma)=C^i_\beta\cup(C^*_\beta\bks\sup(C^i_\beta))=C^\bullet_\beta$.

By clauses (2) and (3) of the definition of $A^i_\alpha(\delta)$,
we get that for all $\delta<\alpha$:
 $$Z_\alpha\cap\beta=\bigcup\{\pi_{j_i,\tau_i}(X_{f^i_\alpha(\eta)})\mid \eta\in A^i_\alpha(\delta),\delta\in C^i_\alpha\cap\beta\}.$$
By $\beta\in\acc(C^*_\alpha)$, and Claim \ref{438}(1), we have:
$$Z_\alpha\cap\beta=\bigcup\{\pi_{j_i,\tau_i}(X_{f^i_\alpha(\eta)})\mid \eta\in A^i_\alpha(\delta),\delta\in C^i_\beta\}.$$
By Claim \ref{438}(2), $f^i_{\beta}\restriction A^i_\beta(\delta)=f^i_{\alpha}\restriction A^i_\alpha(\delta)$ for all $\delta<\beta$, and hence:
$$Z_\alpha\cap\beta=\bigcup\{\pi_{j_i,\tau_i}(X_{f^i_\beta(\eta)})\mid \eta\in A^i_\beta(\delta),\delta\in C^i_\beta\}=Z_\beta.$$

Finally, suppose that we are given a subset $Z\s\lambda^+$, a club $D\s\lambda^+$,
and $\theta\in\Gamma$, and let find some $\alpha\in E^{\lambda^+}_{\theta}$ such that  $C_\alpha^\bullet\s D$ and $Z\cap\alpha=Z_\alpha$.

Let $i<\cf(\kappa)$ be such that $\theta=\lambda_i$.
By the choice of $(j_i,\tau_i,Y_i)$ (as given by Claim \ref{437}),
there exists some  $\alpha\in T_i$ such that:
\begin{enumerate}
\item $C_\alpha^*\s C_\alpha'\cap D$;
\item $H^{j_i}_\alpha\bks Y_i\s \{(\eta,\gamma)\mid Z\cap\eta=\pi_{j_i,\tau_i}(X_\gamma)\}$;
\item $\sup\{\eta\in{C^*_\alpha}\mid (\eta,\gamma)\in H^{j_i}_\alpha\bks Y_i\text{ for some }\gamma\}=\alpha$.
\end{enumerate}

By (1), we get that $C^\bullet_\alpha\s C^*_\alpha\s D$. By $\cf(\alpha)=\lambda_i>\omega$, we get that $\sup(\acc(C_\alpha^\bullet))=\alpha$.
By (3), we get that $\sup(\dom(f^i_\alpha\cap C^*_\alpha))=\alpha$. As $\cf(\alpha)>\omega$,
the set $e_\alpha=\{\delta<\alpha\mid  \sup(A^i_\alpha(\delta))=\delta\}$ is a club in $\alpha$.
Now, if $\delta\in e_\alpha$, then by clause (2), we get that for all $\eta\in A^i_\alpha(\delta)$, we have $Z\cap\eta=\pi_{j_i,\tau_i}(X_{f^i_\alpha(\eta)})$.
Hence, $C^i_\alpha=e_\alpha$, $C^\bullet_\alpha=C^i_\alpha$, and:
$$Z\cap\alpha=\bigcup\{ Z\cap\eta\mid \eta\in A^i_\alpha(\delta), \delta\in C^i_\alpha\}=\bigcup\{\pi_{j_i,\tau_i}(X_{f^i_\alpha(\eta)})\mid \eta\in A^i_\alpha(\delta),\delta\in C^i_\alpha\}.$$
\end{proof}
This completes the proof.
\end{proof}

Next, consider the following ad-hoc variation of $\sd_\lambda(S)$:
\begin{defn} For an uncountable cardinal $\lambda$,
$\sd'_\lambda(S)$ asserts the existence of a sequence $\langle (C_\alpha,S_\alpha)\mid\alpha<\lambda^+\rangle$ such that:

\begin{enumerate}
\item $C_\alpha$ is a club subset of $\alpha$ for all limit $\alpha<\lambda^+$,
with  $\otp(C_\alpha)\le\lambda$;
\item if $\beta\in\acc(C_\alpha)$, then $C_\beta=C_\alpha\cap\beta$ and $S_\beta=S_\alpha\cap\beta$;
\item for every club $D\s\lambda^+$, every subset $A\s\lambda^+$,
there exists some $\alpha<\lambda^+$ such that:
\begin{enumerate}
\item $C_\alpha\s D$;
\item $S_\alpha=A\cap\alpha$;
\item $\alpha\in S$;
\item $\sup(\acc(C_\alpha))=\alpha$.
\end{enumerate}
\end{enumerate}
\end{defn}

So, the only difference between $\sd_\lambda'(S)$ and $\sd_\lambda(S)$
is that in the latter, guesses takes place within $S$, whereas, in the former,
there exists some stationary subset $S'\s S$ with $S'\cap\acc(C_\alpha)=\emptyset$ for all $\alpha<\lambda^+$,
on which the guesses takes place.

\begin{thm}\label{0034} Suppose that $\square_\lambda+\ch_\lambda$ holds for a given uncountable cardinal $\lambda$. Then:
\begin{enumerate}
\item $\sd'_\lambda(T)$ holds for every stationary  $T\s E^{\lambda^+}_{>\omega}\cap E^{\lambda^+}_{\not=\cf(\lambda)}$;
\item $\sd'_\lambda(S)$ holds for every $S\s \lambda^+$ that reflects stationarily often.
\end{enumerate}
\end{thm}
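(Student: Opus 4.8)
The plan is to recycle the machinery from the proof of Theorem~\ref{61}, but specialized to a single stationary target set instead of the full spectrum $\Gamma=\reg(\lambda)$. The hypothesis ``$\lambda$ is not the successor of a regular cardinal'' entered Theorem~\ref{61} only through the construction of a \emph{cofinal} $\Gamma\s\reg(\lambda)$ (the $C'_\alpha$/$E_\epsilon$ surgery and the stationary partition $\langle T_i\rangle$); since $\sd'$ asks us to guess at only one prescribed set, we never need more than one cofinality class, and the hypothesis evaporates. For clause (2) we will, in addition, transfer guesses from suitable reflection points down into $S$ via the built-in coherence.

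For clause (1), fix a $\square_\lambda$-sequence $\langle C_\alpha\mid\alpha<\lambda^+\rangle$ with $C_{\alpha+1}=\emptyset$. For $\alpha\in T\s E^{\lambda^+}_{\not=\cf(\lambda)}$ we have $\cf(\otp(C_\alpha))=\cf(\alpha)\not=\cf(\lambda)$, whence $\otp(C_\alpha)<\lambda$; thus $\alpha\mapsto(\cf(\alpha),\otp(C_\alpha))$ is regressive into $\lambda\times\lambda$, and Fodor thins $T$ to a stationary $T'\s T$ on which it is constantly some pair $(\mu,\gamma)$ with $\omega<\mu\not=\cf(\lambda)$. I would then run the club-guessing collapse of Claims~\ref{322} and~\ref{433} (legitimate by Abraham--Magidor, since $T'\s E^{\lambda^+}_{>\omega}$ and $\otp(C_\alpha)=\gamma<\lambda$ there) to obtain a $\square_\lambda$-sequence $\langle C^*_\alpha\rangle$ which guesses clubs along $T'$; build the coherent matrix $\mathcal H=\langle H^j_\alpha\rangle$ exactly as in Claim~\ref{436} via Todorcevic's coherent injections; and replay the reflection argument of Claim~\ref{437} with the single colour $\mu$ in place of the family $\{\lambda_i\}$. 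The only property of $T'$ used there is $\cf(\alpha)=\mu\not=\cf(\lambda)$, which is exactly what forces a cofinal monochromatic $L_j\s\alpha$; as $\ch_\lambda$ supplies the enumeration of $[\lambda\times\lambda\times\lambda^+]^{\le\lambda}$ and $|H^j_\alpha|<\lambda$, the pigeonhole-at-$\lambda$ contradiction survives unchanged. Assembling $(C^\bullet_\alpha,S_\alpha)$ verbatim as in the final claim of Theorem~\ref{61} (with a single index rather than a partition, which only simplifies matters) yields a coherent pair whose successful guesses land in $T'\s T$ and, since $\cf(\alpha)=\mu>\omega$, satisfy $\sup(\acc(C^\bullet_\alpha))=\alpha$. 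This is $\sd'_\lambda(T)$.

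For clause (2), let $R:=\{\delta\in E^{\lambda^+}_{>\omega}\mid S\cap\delta\text{ is stationary in }\delta\}$, which is stationary by hypothesis. The mechanism is reflection through the built-in coherence: if I can arrange a coherent companion $\langle S_\alpha\rangle$ together with club-guessing so that for every club $D$ and every $A\s\lambda^+$ there is $\delta\in R$ with $C_\delta\s D$, $S_\delta=A\cap\delta$, and $\sup(\acc(C_\delta))=\delta$, then, $\cf(\delta)>\omega$ making $\acc(\acc(C_\delta))$ a club in $\delta$ and $S\cap\delta$ being stationary in $\delta$, I may pick $\alpha\in S\cap\acc(\acc(C_\delta))$; coherence then delivers $C_\alpha=C_\delta\cap\alpha\s D$, $S_\alpha=S_\delta\cap\alpha=A\cap\alpha$, $\alpha\in S$, and $\sup(\acc(C_\alpha))=\alpha$ --- the last holding irrespective of $\cf(\alpha)$, which is how the awkward $\cf(\alpha)=\omega$ points of $S$ are absorbed. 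When stationarily many reflection points have cofinality $\not=\cf(\lambda)$, the required companion is produced by clause (1) applied to that stationary set. The residual case --- all reflection points of cofinality $\cf(\lambda)$, as is forced e.g. when $\lambda$ is regular --- is handled by exploiting that a point $\delta\in E^{\lambda^+}_{\cf(\lambda)}$ carrying $\otp(C_\delta)=\lambda$ can never be an accumulation point of another component (else $\otp(C_\gamma)>\lambda$); hence the companion is \emph{unconstrained} at such $\delta$ and may be fed a $\diamondsuit_{\lambda^+}$-sequence, which $\square_\lambda+\ch_\lambda$ provides on $E^{\lambda^+}_{\cf(\lambda)}$ (the one set where $\ch_\lambda$ alone is insufficient), then extended coherently along the square.

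The main obstacle splits by clause. For clause (1) it is the reflection step of Claim~\ref{437}: forcing the coherent companion to guess an arbitrary $A$ along $T'$ while staying $\acc$-coherent, driven now by the single cofinality $\mu\not=\cf(\lambda)$ --- isolating that one requirement is precisely what deletes the ``successor of a regular'' restriction of Theorem~\ref{61}. For clause (2) the genuinely harder point is the residual case: manufacturing a coherent companion that guesses correctly at cofinality-$\cf(\lambda)$ reflection points, exactly the regime in which $\diamondsuit_{\lambda^+}$ is most delicate. The saving observation is that the relevant $E^{\lambda^+}_{\cf(\lambda)}$-points are free of coherence constraints, so the diamond obtained from $\square_\lambda+\ch_\lambda$ can be installed there without clashing with the square, after which reflection does the rest.
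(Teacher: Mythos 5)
Your clause (1) and the first half of your clause (2) coincide with the paper's own argument: the paper obtains clause (1) precisely by specializing the proof of Theorem \ref{61} to a single stationary set (your Fodor step producing one fixed pair $(\mu,\gamma)$ plays the role of the sets $T_i$ there, and with a single index the disjointness claims become vacuous, as you say), and your reflection transfer --- guess at a reflection point $\delta$, then pick $\alpha\in S\cap\acc(\acc(C_\delta))$ and use coherence of both coordinates --- is exactly how the paper deduces $\sd'_\lambda(S)$ from $\sd'_\lambda(T)$ in its second case.

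The proof breaks in your residual case, and that case is unavoidable: already for $\lambda=\omega_1$ and stationary $S\s E^{\omega_2}_\omega$ reflecting stationarily often, every reflection point has cofinality $\omega_1=\cf(\lambda)$ and $S\cap E^{\lambda^+}_{>\omega}\cap E^{\lambda^+}_{\not=\cf(\lambda)}=\emptyset$, so nothing earlier applies. Your plan there addresses only the requirement $S_\delta=A\cap\delta$; the transfer equally needs $C_\delta\s D$ and $\sup(\acc(C_\delta))=\delta$ at the very same $\delta$, i.e.\ club-guessing by the square components at points of cofinality $\cf(\lambda)$, which installing a $\diamondsuit$-sequence does nothing to arrange (moreover the ``free'' points are not mutually unconstrained: if $\beta\in\acc(C_\delta)\cap\acc(C_{\delta'})$ for two such points, coherence forces $S_\delta\cap\beta=S_{\delta'}\cap\beta$, so one cannot feed them an arbitrary diamond). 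Worse, no argument of this shape can succeed: what you require at the reflection points is $\sd'_\lambda(R)$ for a stationary $R\s E^{\lambda^+}_{\cf(\lambda)}$, which by Lemma \ref{18} implies $\sc_\lambda(R)$; since for regular $\lambda$ no subset of $E^{\lambda^+}_\lambda$ reflects, the models of \cite{sh186},\cite{kingsteinhorn} invoked in the introduction show that this is consistently false under $\square_\lambda+\ch_\lambda$. The paper's idea for this case is different in kind: it never guesses at the reflection points at all. It proves Shelah-style club-guessing directly at the countable-cofinality points of $S$ (Claim \ref{351}): clubs $D_i$, $i<\lambda$, are built recursively from putative counterexamples and intersected into $D^*$, and a reflection point $\delta\in\acc(D^*)\cap T$ is used only inside the ensuing contradiction, by picking $\alpha\in S\cap\acc(\drop(\delta,D^*))$ and stabilizing the fewer-than-$\lambda$ values $\sup((\bigcap_{j<i}D_j)\cap\beta)$ for $\beta\in C_\alpha$ --- this is where $\otp(C_\alpha)<\lambda$ and the reflection are actually consumed. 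The re-indexed sequence $C^*_\alpha:=\pi^{-1}``\drop(\pi(\alpha),D^*)$ is then a $\square_\lambda$-sequence guessing clubs at points of $S$ itself (Claim \ref{232} and the claim following it), on top of which the diamond machinery of Theorem \ref{61} (from Claim \ref{436} onward) is rerun. Using reflection inside a contradiction argument to obtain guessing at cofinality $\omega$, rather than as a source of large-cofinality guessing targets, is the idea your proposal is missing.
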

\begin{proof} A careful look at the proof of Theorem \ref{61} reveals that the hypotheses implies clause $(1)$.
Hence, we turn to deal with the second clause.

Suppose that $S\s \lambda^+$ reflects stationarily often.
That is, we assume that the set $T:=\{\alpha\in E^{\lambda^+}_{>\omega}\mid S\cap\alpha\text{ is stationary}\}$ is stationary.

$\blacktriangleright$ If $S\cap E^{\lambda^+}_{>\omega}\cap E^{\lambda^+}_{\not=\cf(\lambda)}$ is stationary, then $\sd'_\lambda(S)$ holds
as a consequence of clause (1), and we are done.

$\blacktriangleright$ If $S\s E^{\lambda^+}_{\cf(\lambda)}$ and $S$ reflects stationarily often, then $\lambda$ is singular and $T\s  E^{\lambda^+}_{>\omega}\cap E^{\lambda^+}_{\not=\cf(\lambda)}$ is stationary.
It follows that $\sd'_\lambda(T)$ holds. Let $\langle (C_\alpha,S_\alpha)\mid \alpha<\lambda^+\rangle$
witness the validity of $\sd'_\lambda(T)$; it is easy to see that this sequence also witnesses the validity of $\sd'_\lambda(S)$.

$\blacktriangleright$ Suppose that $S\s E^{\lambda^+}_\omega\cap E^{\lambda^+}_{\not=\cf(\lambda)}$,
and that $S$ reflects stationarily often.
Let $\langle C_\alpha\mid \alpha<\lambda^+\rangle$
be a $\square_{\lambda}$-sequence such that $\otp(C_\alpha)<\lambda$ for all $\alpha\in E^{\lambda^+}_{<\lambda}$ (e.g., as in Claim \ref{322}).
Given a club $E\s\lambda^+$, denote $\drop(\alpha,E):=\{ \sup(E\cap\beta)\mid \beta\in C_\alpha\bks \min(E)\}$.
We now reproduce Shelah's club-guessing argument for stationary sets that concentrates on countable cofinality:
\begin{claim}\label{351} There exists a club $D^*\s\lambda^+$ such that for every club $D\s\lambda^+$,
there exists some $\alpha\in S$ such that:
\begin{itemize}
\item $\drop(\alpha,D^*)\s D$, and
\item $\sup(\acc(\drop(\alpha,D^*))=\alpha$.
\end{itemize}
\end{claim}
\begin{proof} Suppose not. We shall define a sequence of club subsets of $\lambda^+$, $\langle D_i\mid i<\lambda\rangle$,
by recursion on $i<\lambda$. We start with picking  a club $D_0$
such that for all $\alpha\in S$, either $\sup(\acc(\drop(\alpha,\lambda^+))<\alpha$ or $\drop(\alpha,\lambda^+)\not\s D_0$.
Now suppose that $i<\lambda^+$ and that $\langle D_j\mid j<i\rangle$ have already been defined.
By the indirect assumption, we now pick a club $D_i\s\lambda^+$ 
such that $\sup(\acc(\drop(\alpha,\bigcap_{j<i}D_j))<\alpha$ or $\drop(\alpha,\bigcap_{j<i}D_j)\not\s D_i$,
for all $\alpha\in S$. 
This completes the construction.

Put $D^*:=\bigcap_{i<\lambda}D_i$, and pick $\delta\in\acc(D^*)\cap T$.
It is easy to see that $\acc(\drop(\delta,D^*))$ is a club subset of $\acc(C_\delta)$.
As $S\cap\delta$ is stationary in $\delta$, let us pick some $\alpha\in S\cap\acc(\drop(\delta,D^*))$.
Then, $\drop(\alpha,D^*)=\drop(\delta,D^*)\cap\alpha$
and $\sup(\acc(\drop(\alpha,D_i))=\alpha$ for all $i<\lambda$.
Consequently, $\drop(\alpha,\bigcap_{j<i}D_j)\not\s D_{i}$ for all $i<\lambda^+$.

As $\langle \bigcap_{j<i}D_j\mid i<\lambda\rangle$ is a decreasing chain,
for all $\beta\in C_\alpha$, the sequence of ordinals $\langle \sup((\bigcap_{j<i}D_j)\cap\beta)\mid i<\lambda\rangle$ stabilizes at some $i_\beta<\lambda$.
Since $\otp(C_\alpha)<\lambda$, we may fix a large enough $i^*<\lambda$ such that $i^*>i_\beta$ for all $\beta\in C_\alpha$.
As $\drop(\alpha,\bigcap_{j<i^*}D_j)\not\s D_{i^*}$, let us   pick some $\beta\in C_\alpha$
such that $\sup((\bigcap_{j<i^*}D_j)\cap\beta)\not\in D_{i^*}$.
As $\{ \sup((\bigcap_{j<i}D_j)\cap\beta)\mid i_\beta<i<\lambda\}$ is a singleton,
say it equals $\{\beta'\}$, we have $\beta'\in\bigcap_{j<i}D_j$ for all $j\in(i_\beta,\lambda)$.
So $\beta'\in D_{i^*}$, contradicting the fact that  $\sup((\bigcap_{j<i^*}D_j)\cap\beta)\not\in D_{i^*}$.
\end{proof}
Let $D^*$ be the club provided by the previous claim,
and let $\pi:\lambda^+\rightarrow D^*$ denote  the inverse collapse.
For all limit $\alpha<\lambda^+$, let $$C_\alpha^*:=\pi^{-1}``\drop(\pi(\alpha),D^*).$$

\begin{claim}\label{232}  $\langle C^*_\alpha\mid \alpha<\lambda^+\rangle$ is a $\square_\lambda$-sequence.

Moreover, $\otp(C_\alpha^*)<\lambda$ for all $\alpha\in E^{\lambda^+}_{<\lambda}$.
\end{claim}
\begin{proof} Fix a limit $\alpha<\lambda^+$. Since $\pi(\alpha)\in\acc(D^*)$, we get that $\drop(\pi(\alpha),D^*)$
is some club subset of $D^*\cap\pi(\alpha)$.
Since $\pi$ is continuous, we infer that $C_\alpha^*$ is a club subset of $\alpha$.
Also, it is clear that $\otp(C_\alpha^*)=\otp(\drop(\pi(\alpha),D^*))\le\otp(C_{\pi(\alpha)})$.
In particular, if $\cf(\alpha)<\lambda$, then $\cf(\pi(\alpha))<\lambda$, and $\otp(C_\alpha^*)<\lambda$.

Next, suppose that $\alpha<\lambda^+$ is a limit ordinal and that $\beta\in\acc(C_\alpha^*)$.
Then $\pi(\beta)\in\acc(\drop(\pi(\alpha),D^*))$, and hence $\pi(\beta)\in\acc(C'_{\pi(\alpha)})\cap\acc(D^*)$.
So $C'_{\pi(\beta)}=C'_{\pi(\alpha)}\cap\pi(\beta)$, and $\drop(\pi(\beta),D^*)=\drop(\pi(\alpha),D^*)\cap\pi(\beta)$.
By pulling back the latter, using $\pi$, we infer that $C_\beta^*=C_\alpha^*\cap\beta$.
\end{proof}

\begin{claim} For every club $D\s\lambda^+$,
the set $$\{ \alpha\in S\mid \sup(\acc(C_\alpha^*))=\alpha\ \&\ C_\alpha^*\s D\}$$ is stationary.
\end{claim}
\begin{proof} Suppose that $D,E$ are club subsets of $\lambda^+$.
Our aim is to find an $\alpha\in S$ such that $\sup(\acc(C_\alpha^*))=\alpha$, $C_\alpha^*\s D$, and $\alpha\in E$.

Consider the next club:
$$D':=\{\alpha\in D\cap E\mid \pi(\alpha)=\alpha\}.$$

By the choice of $D^*$,
we may pick some $\alpha\in S$ such that $\drop(\alpha,D^*)\s D'$
and $\sup(\acc(\drop(\alpha,D^*))=\alpha$. In particular, $\pi(\alpha)=\alpha$, and hence $\drop(\pi(\alpha),D^*)\s D'$.
So, $\pi^{-1}(\beta)=\beta$ for all $\beta\in \drop(\pi(\alpha),D^*)$, and hence $C_\alpha^*=\drop(\pi(\alpha),D^*)=\drop(\alpha,D^*)$.
\end{proof}

At this stage, note that the preceding claims are analogs of Claims \ref{322}, \ref{433}.
Now, continuing along the lines of the proof of the Theorem \ref{61}
(starting with Claim \ref{436}), it is clear how to utilize the established properties of $\langle C_\alpha^*\mid \alpha<\lambda^+\rangle$
to prove that $\sd'_{\lambda}(S)$ holds.
\end{proof}

\begin{lemma}\label{18} Suppose that $\lambda$ is a given uncountable cardinal.

Then $\sd'_\lambda(S)\Rightarrow \sd_\lambda(S)\Rightarrow \sc_\lambda(S)$ for every stationary $S\s\lambda^+$.
\end{lemma}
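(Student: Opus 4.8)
The plan is to establish the two implications separately, each by showing that a witnessing sequence for the stronger principle can be read as (or lightly modified into) a witnessing sequence for the weaker one. The key observation throughout is that the three principles share the same underlying $\square_\lambda$-type coherence, and differ only in the strength and location of their guessing clauses.

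For the first implication $\sd'_\lambda(S)\Rightarrow\sd_\lambda(S)$, I would start with a $\sd'_\lambda(S)$-sequence $\langle (C_\alpha,S_\alpha)\mid\alpha<\lambda^+\rangle$. Recall that the definitional gap is precisely that $\sd'_\lambda(S)$ performs its guessing on a stationary $S'\s S$ disjoint from all accumulation sets $\acc(C_\alpha)$, whereas $\sd_\lambda(S)$ (via Definition \ref{31} with the obvious relativization) guesses at points of $S$ of prescribed cofinality $\theta\in\Gamma$ with $\sup(\acc(C_\alpha))=\alpha$. The natural move is to collapse the sequence along a suitable club so that the nonreflecting guessing points of $\sd'_\lambda(S)$ are pushed into genuine accumulation points. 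Concretely, I would fix an inverse collapse $\pi$ onto a carefully chosen club $D^*$ and define a new coherent pair by pulling the given data back through $\pi$, exactly as in the passages from $C_\alpha'$ to $C_\alpha^*$ in Theorem \ref{61} and Theorem \ref{0034}. The point is that clause (3)(d) of $\sd'_\lambda(S)$, namely $\sup(\acc(C_\alpha))=\alpha$, already supplies the accumulation-point structure that clause (2)(d) of $\sd_\lambda(S)$ demands, so the guessing property transfers once one checks that the collapse preserves coherence and the club/set capturing. The cofinality requirement $\cf(\alpha)=\theta$ is arranged by intersecting the relevant clubs with $E^{\lambda^+}_\theta$ before collapsing.

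For the second implication $\sd_\lambda(S)\Rightarrow\sc_\lambda(S)$, I would take a $\sd_\lambda(S)$-sequence and \emph{discard} the diamond sequence $\langle S_\alpha\rangle$, retaining only $\langle C_\alpha\rangle$, which is already a $\square_\lambda$-sequence. What remains is to verify clauses (3) and (4) of the definition of $\sc_\lambda(S)$. Clause (4), that the guessing set $S'$ avoids all $\acc(C_\alpha)$, should be extracted by taking $S'$ to be the set of $\alpha$ at which $\sd_\lambda(S)$ performs its guesses; I expect this to follow from clause (1) of Definition \ref{31} together with the cofinality restriction $\cf(\alpha)=\theta$. For clause (3)—that for every club $D$ and cofinal $A$ there is $\alpha\in S'$ with $\acc(C_\alpha)\s D$ and $\nacc(C_\alpha)\s A$—I would feed an appropriately constructed club and subset into the guessing clause (2) of $\sd_\lambda(S)$. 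The accumulation-point containment $\acc(C_\alpha)\s D$ comes directly from $C_\alpha\s D$ (clause (2)(a)), while to force $\nacc(C_\alpha)\s A$ I would exploit the freedom in the set parameter $A$ of clause (2)(b), encoding membership in the given cofinal set via the guessed $S_\alpha=A\cap\alpha$.

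The main obstacle I anticipate is the second implication's clause (3): translating the single-set diamond guess $S_\alpha=A\cap\alpha$ into the two simultaneous \emph{club-guessing} demands on $\acc(C_\alpha)$ and $\nacc(C_\alpha)$. Matching $\acc(C_\alpha)$ against a club $D$ is straightforward since it is subsumed by $C_\alpha\s D$, but steering the non-accumulation points into an arbitrary cofinal $A$ requires choosing the diamond target set cleverly and arguing that the guessed interval structure lands inside $A$; this is where I would spend the real effort, and where a dropping/thinning construction on $C_\alpha$ analogous to those in the proof of Theorem \ref{61} may be needed. The first implication, by contrast, should reduce to a routine collapse-and-coherence verification of the kind already carried out twice in the excerpt.
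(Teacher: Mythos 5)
Your plan founders on a misreading of the two principles, and the misreading is not cosmetic: it inverts which implication is trivial and which requires work. By the displayed definition, $\sd'_\lambda(S)$ is the principle whose guessing merely takes place at some $\alpha\in S$ (clause (3)(c)), with no constraint on the set of guessing points; $\sd_\lambda(S)$, being the relativization of Definition \ref{31} in the pattern of $\sc_\lambda(S)$, is the principle that demands a guessing set $S'\s S$ with $S'\cap\acc(C_\alpha)=\emptyset$ for all $\alpha<\lambda^+$. (The paper's one-sentence comparison following the definition of $\sd'_\lambda(S)$ has ``former'' and ``latter'' transposed, which is likely what misled you; the definition itself, the statement of $\sc_\lambda(S)$, and the paper's proof of the lemma --- whose conclusion reads ``$\langle (C^*_\alpha,S^*_\alpha)\mid\alpha<\lambda^+\rangle$ and $S'$ witness $\sd_\lambda(S)$'' --- disambiguate.) So the first implication is nontrivial precisely because one must \emph{manufacture} a non-reflecting guessing set, not dissolve one, and your collapse-along-a-club device cannot do that: pulling the data back through the inverse collapse of a club relabels ordinals, but gives no reason why the relabeled guessing points should avoid the accumulation points of the relabeled clubs. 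The paper's proof has three ingredients, none of which appear in your plan: (i) re-code the diamond component into $\lambda$-many derived sequences $S^i_\alpha$, using an enumeration of $[\lambda\times\lambda^+]^{\le\lambda}$ in which every set appears cofinally often; (ii) run a diagonal pigeonhole argument to find a \emph{single} order-type $i<\lambda$ such that every pair $(D,A)$ is guessed at some $\alpha\in S$ with $\otp(C_\alpha)=i$, $C_\alpha\s D$, $S^i_\alpha=A\cap\alpha$ and $\sup(\acc(C_\alpha))=\alpha$; (iii) set $S':=\{\alpha\in S\mid\otp(C_\alpha)=i\}$ and truncate $C^*_\alpha:=\{\beta\in C_\alpha\mid\otp(C_\alpha\cap\beta)>i\}$ whenever $\otp(C_\alpha)>i$, so that every $\beta\in\acc(C^*_\alpha)$ satisfies $\otp(C_\beta)\not=i$ and hence $\beta\notin S'$, while guessing points (having order-type exactly $i$) keep their clubs intact. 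It is this order-type separation, not any collapse, that produces the disjointness clause.

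For the second implication your outline points in the right general direction --- use the guessed $S_\alpha$ to steer non-accumulation points into the prescribed cofinal set $A$ --- but it stops exactly where the work begins, and your claim that the disjointness clause of $\sc_\lambda(S)$ should follow ``from clause (1) of Definition \ref{31} together with the cofinality restriction'' is unfounded: coherence plus a cofinality constraint does not prevent a guessing point from being an accumulation point of some other club. In the paper that clause is inherited for free from the $S'$ already provided by $\sd_\lambda(S)$, since the modified clubs satisfy $\acc(C^\bullet_\alpha)\s\acc(C_\alpha)$; and the steering itself is the mechanism of Lemma \ref{25}: put $C'_\alpha:=\acc(C_\alpha)$, and for each $\beta\in\nacc(C'_\alpha)$ use the coherent guess $S_\beta$ to decode a point $\beta_\alpha\in A$ inside the interval $\bigl(\min\bigl(C_\alpha\bks(\sup(C'_\alpha\cap\beta)+1)\bigr),\beta\bigr)$, then take $C^\bullet_\alpha:=\acc(C'_\alpha)\cup\{\beta_\alpha\mid\beta\in\nacc(C'_\alpha)\}$. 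That interval-decoding construction is the missing piece of your sketch; without it, retaining $C_\alpha$ unchanged can never yield $\nacc(C_\alpha)\s A$ for an arbitrary cofinal $A$.
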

\begin{proof}
The proof of the second implication is a straight-forward variation of the proof of Lemma \ref{25} below,
so let us focus on establishing the first implication.

Let $\langle (C_\alpha,S_\alpha)\mid\alpha<\lambda^+\rangle$ be a witness to $\sd'_\lambda(S)$.
Let $\{ X_\gamma\mid \gamma<\lambda^+\}$ be some enumeration of $[\lambda\times\lambda^+]^{\le\lambda}$
such that each element of $[\lambda\times\lambda^+]^{\le\lambda}$ is enumerated cofinally often.
For all limit $\alpha<\lambda^+$ and $i<\lambda$, put
\begin{itemize}
\item $X^i_\alpha:=\{\eta\mid (i,\eta)\in X_\alpha\}$;
\item $S^i_\alpha:=\bigcup\{ X^i_\gamma\cap[\sup(C_\alpha\cap\gamma),\min(C_\alpha\bks\gamma+1))\mid \gamma\in S_\alpha\}.$
\end{itemize}
It easy to see that if $\beta\in\acc(C_\alpha)$, then $S^i_\beta=S^i_\alpha\cap\beta$.

\begin{claim} There exists some $i<\lambda$, such that for every club $D\s\lambda^+$,
and every $A\s\lambda^+$, for some $\alpha\in S$, we have:
\begin{enumerate}
\item[(a)] $C_\alpha\s D$;
\item[(b)] $S^i_\alpha=A\cap\alpha$;
\item[(c)] $\sup(\acc(C_\alpha))=\alpha$;
\item[(d)] $\otp(C_\alpha)=i$.
\end{enumerate}
\end{claim}
\begin{proof} Suppose not. Then for every $i<\lambda$, we may pick a pair $(D_i,A_i)$
witnessing that (a)$\wedge$(b)$\wedge$(c)$\wedge$(d) do not hold.
Put $H:=\bigcup_{i<\lambda}(\{i\}\times A_i)$.
Let $f:\lambda^+\rightarrow\lambda^+$ be some strictly increasing function satisfying $X_{f(\delta)}=H\cap(\lambda\times\delta+1)$
for all $\delta<\lambda^+$.
Put $A:=f``\lambda^+$ and $D:=\left\{ \beta\in\bigcap_{i<\lambda}D_i\mid f[\beta]\s\beta\right\}$.

Pick $\alpha\in S$ such that $C_\alpha\s D$, $S_\alpha=A\cap\alpha$ and $\sup(\acc(C_\alpha))=\alpha$.
Put $i:=\otp(C_\alpha)$. As $C_\alpha\s D\s D_i$, to meet a contradiction, it suffices to establish that $S^i_\alpha=A_i\cap\alpha$.
Evidently, $A_i\cap\alpha=\bigcup\{ X^i_{\gamma}\mid \gamma\in f[\alpha]\}$.
By $C_\alpha\s D$, we get that $f[\alpha]\s\alpha$, and hence $A_i\cap\alpha=\bigcup\{ X^i_{\gamma}\mid \gamma\in A\cap\alpha\}$.
That is, $A_i\cap\alpha=\bigcup\{ X^i_{\gamma}\mid \gamma\in S_\alpha\}$.
Recalling the definition of $S^i_\alpha$, we get that $S^i_\alpha\s A_i\cap\alpha$.
Finally, pick some $\delta\in A_i\cap\alpha$, and let us show that $\delta\in S^i_\alpha$.
Put $\gamma:=f(\delta)$, $\gamma^-:=\sup(C_\alpha\cap\gamma)$ and $\gamma^+:=\min(C_\alpha\bks\gamma+1)$.
Since $f$ is strictly increasing, we get that $\max\{\delta,\gamma^-\}\le \gamma<\gamma^+$.
If $\delta<\gamma^-$, then by $\gamma^-\in C_\alpha\s D$, we would get that $\gamma=f(\delta)<\gamma^-$.
So $\gamma^-\le\delta\le\gamma<\gamma^+$.

Since $f[\alpha]=S_\alpha$, we get that $\gamma\in S_\alpha$, and hence $\delta\in X^i_\gamma\cap[\gamma^-,\gamma^+)$.
\end{proof}
Let $i$ be given by the previous claim.
Put $S':=\{\alpha\in S\mid \otp(C_\alpha)=i\}$.
For all limit $\alpha<\lambda^+$, define $S_\alpha^*:=S_\alpha^i$, and
$$C^*_\alpha:=\begin{cases}
C_\alpha,&\otp(C_\alpha)\le i\\
\{\beta\in C_\alpha\mid\otp(C_\alpha\cap\beta)>i\},&\text{otherwise}
\end{cases}.$$

Then $\langle (C_\alpha^*,S^*_\alpha)\mid \alpha<\lambda^+\rangle$ and $S'$ witness the validity of $\sd_\lambda(S)$-sequence.
\end{proof}

\npg\section{The Ostaszewski square}\label{sec3}
In this section, we shall study the validity of $\sc_\lambda$.
Before doing so,  let us mention three variations of Definition \ref{def13} that  happens to be equivalent to the original one.
\begin{lemma} The existence of a $\square_\lambda$-sequence $\overrightarrow C=\langle C_\alpha\mid\alpha<\lambda^+\rangle$
satisfying Clause (3) of Definition \ref{def13} is equivalent to the existence of a $\square_\lambda$-sequence $\overrightarrow C$
that satisfies any of the following:
\begin{itemize}
\item[($3_{strong}$)] Suppose that $\langle A_i\mid i<\lambda\rangle$
is a sequence of unbounded subsets of $\lambda^+$.

Then for every limit $\theta<\lambda$, and every \underline{unbounded} $U\s\lambda^+$,
there exists some $\alpha<\lambda^+$ such that $\otp(C_\alpha)=\theta$, and for all $i<\theta$:
\begin{enumerate}
\item[(a)] $C_\alpha(i+1)\in A_i$;
\item[(b)] $C_\alpha(i)<\beta<C_\alpha(i+1)$ for some $\beta\in U$.
\end{enumerate}

\item[($3_{smooth}$)] Suppose that $\langle A_i\mid i<\lambda\rangle$
is a sequence of unbounded subsets of $\lambda^+$,
such that $A_i$ is closed, for all limit $i<\lambda$.

Then for every limit $\theta<\lambda$,
there exists some $\alpha<\lambda^+$ such that the  isomorphism $\pi_\alpha:\otp(C_\alpha)\rightarrow C_\alpha$
is a choice function in $\prod_{i<\theta}A_i$.

\item[($3_{succinct}$)] Suppose that $\langle A_i\mid i<\lambda\rangle$
is a sequence of unbounded subsets of $\lambda^+$.

Then for every limit $\theta<\lambda$,
there exists some $\alpha<\lambda^+$ such that $\otp(C_\alpha)=\theta$, and
$C_\alpha(i+1)\in A_i$ for all $i<\theta$.

\end{itemize}
\end{lemma}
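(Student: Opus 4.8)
The plan is to run a short cycle of implications, stripping conditions in the easy directions and recovering them by feeding $(3_{succinct})$ a cleverly thinned sequence of targets in the hard directions. I keep the same $\square_\lambda$-sequence $\overrightarrow C$ wherever possible, and I use freely that $C_\alpha(i)<C_\alpha(i+1)$ are consecutive in $C_\alpha$, that $C_\alpha(\delta)=\sup_{i<\delta}C_\alpha(i)$ for limit $\delta$ (by closure of $C_\alpha$), and that ``$C_\alpha(i)\in A_i$ for all $i<\theta$'' is exactly the assertion that $\pi_\alpha\in\prod_{i<\theta}A_i$. The easy directions are immediate on the same $\overrightarrow C$: since a club is in particular unbounded, $(3_{strong})$ applied with $U:=D$ yields $(3)$; and $(3)$ applied with $D:=\lambda^+$, discarding clause (b), yields $(3_{succinct})$. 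Likewise $(3_{smooth})\Rightarrow(3_{succinct})$ by an index shift: given unbounded $\langle A_i\rangle$, feed $(3_{smooth})$ the sequence $\langle B_i\rangle$ with $B_{i+1}:=A_i$ at successors and $B_i:=\acc^+(\lambda^+)$ (a club, hence closed) at $0$ and at limit indices; then $C_\alpha(i+1)\in B_{i+1}=A_i$ is precisely clause (a).

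The substance is to recover the gap clause, and then the stronger $(3_{strong})$, from $(3_{succinct})$. Here I build by recursion on $i<\theta$ a thinned sequence $\langle B_i\mid i<\theta\rangle$ with $B_i\s A_i$; writing $W_i:=\bigcup_{k<i}B_k$, I set for $i\ge 1$
$$B_i:=\{\gamma\in A_i\mid U\cap(\sup(W_i\cap\gamma),\gamma)\neq\emptyset\}.$$
The key point is that if $C_\alpha(i+1)=\gamma\in B_i$, then each value $C_\alpha(k+1)$ with $k<i$ lies in $W_i\cap\gamma$, whence $C_\alpha(i)\le\sup(W_i\cap\gamma)$ (using $C_\alpha(i)=\sup_{k<i}C_\alpha(k+1)$ when $i$ is limit), so the witnessing point of $U$ lands strictly inside the gap $(C_\alpha(i),C_\alpha(i+1))$. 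Feeding $(3_{succinct})$ this sequence then delivers $(3_{strong})$ at every index $i\ge 1$; the boundary index $i=0$, where $C_\alpha(0)=\min(C_\alpha)$ is uncontrolled, is handled by choosing $B_0$ to be an unbounded set tailored to $U$, e.g.\ consisting of $A_0$-points sitting just above a cofinal set of points of $\acc^+(U)$.

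For $(3_{smooth})$ the extra content is control at the limit positions and at $0$. I exploit that a closed unbounded set is a club, so each limit-index $A_\delta$ is a club and $E:=\bigcap\{A_\delta\mid \delta<\theta\text{ limit}\}$ is again a club (an intersection of fewer than $\lambda<\lambda^+$ clubs). I then arrange the realized $C_\alpha$ so that $\acc(C_\alpha)\s E$: since $C_\alpha(\delta)=\sup_{i<\delta}C_\alpha(i)$ and $E$ is closed, this forces $C_\alpha(\delta)\in E\s A_\delta$ at every limit position, while the successor positions are pinned into $A_{i+1}$ by the index shift and the minimum is pinned into $A_0$ by the boundary tailoring. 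To thread $\acc(C_\alpha)$ into $E$ I first replace $\overrightarrow C$ by its intersection against $E$ (the $\ACC$-operation discussed in the introduction, which disturbs only the accumulation points and preserves the guessing carried by the non-accumulation points), or equivalently anchor the controlled non-accumulation values to $E$.

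The main obstacle is checking that the thinned targets $B_i$ remain \emph{unbounded} — the exact hypothesis that $(3_{succinct})$ consumes — while still guaranteeing a point of $U$ strictly between $C_\alpha(i)$ and $C_\alpha(i+1)$. The danger is an adversarial interleaving of $A_i$, $U$ and the previously chosen $W_i$ in which the interval $(\sup(W_i\cap\gamma),\gamma)$ systematically misses $U$. I expect to defeat this using the freedom in the recursion: building each $B_i$ sparse — and, at limit indices, as a club with long gaps tailored to the given $U$ and $A_i$ — so that cofinally many $\gamma\in A_i$ sit just above a point of $U$ lying above $\sup(W_i\cap\gamma)$. Controlling the genuinely uncontrolled boundary positions (the minimum $C_\alpha(0)$ and the limit positions) is the secondary difficulty, resolved respectively by the boundary tailoring and by the club $E$ above.
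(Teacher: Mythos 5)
Your overall architecture coincides with the paper's: a cycle of implications in which everything except $(3_{succinct})\Rightarrow(3_{strong})$ is routine, and the gap clause is recovered by thinning the targets $A_i$ so that any club threading the thinned sets automatically traps points of $U$ in its gaps; your ``key point'' (that $C_\alpha(i)\le\sup(W_i\cap\gamma)$, so the witnessing $U$-point lands inside the gap) is exactly the mechanism the paper exploits. The genuine gap is in how the thinned sets are produced, which is precisely the step you flag as the main obstacle and never close. Your displayed definition of $B_i$ is deterministic and provably fails: take $U=\{\omega\cdot\xi+1\mid\xi<\lambda^+\}$, $A_0=\{\omega\cdot\xi+4\mid\xi<\lambda^+\}$, $A_1=\{\omega\cdot\xi+2\mid\xi<\lambda^+\}$, $A_2=\{\omega\cdot\xi+3\mid\xi<\lambda^+\}$. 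Whatever $B_0\s A_0$ you tailor, every $\gamma=\omega\cdot\xi+2\in A_1$ has $\sup(W_1\cap\gamma)\le\omega\cdot\xi<\omega\cdot\xi+1<\gamma$, so your formula returns $B_1=A_1$; but then every $\gamma=\omega\cdot\xi+3\in A_2$ has $\sup(W_2\cap\gamma)=\omega\cdot\xi+2$ and $U\cap(\omega\cdot\xi+2,\omega\cdot\xi+3)=\emptyset$, so $B_2=\emptyset$ and the recursion collapses at stage $2$. Your fallback (``use the freedom in the recursion, build each $B_i$ sparse'') concedes the formula must be abandoned, but the replacement is never specified, and it cannot be a per-index matter: feasibility at stage $i$ depends on how \emph{all} earlier $B_k$ interleave with $U$ and with $A_i$, so the choices must be coordinated globally from the start. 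That coordination is the actual content of the paper's proof: one recursion of length $\lambda^+$ producing a single strictly increasing $f:\lambda^+\rightarrow\lambda^+$ with $f(\lambda\cdot\gamma+i)\in A_i$ and $f(\alpha)>\min(U\bks(\sup(f[\alpha])+1))$ for $\alpha>0$. Then $A_i':=\{f(\lambda\cdot\gamma+i)\mid\gamma<\lambda^+\}$ is an unbounded subset of $A_i$ by construction ($\lambda^+$ many slots per index), and any two points of $\im(f)$, as well as any limit of such points and the next one, are separated by a point of $U$; feeding $\langle A_i'\mid i<\lambda\rangle$ to $(3_{succinct})$ then yields clause (b) at every $i\ge1$ with no further argument.

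Separately, your treatment of the boundary index $i=0$ does not work, either for $(3_{strong})$ or for your route to $(3_{smooth})$. Since $(3_{succinct})$ gives no control over $\min(C_\alpha)$, guaranteeing a $U$-point in $(\min(C_\alpha),C_\alpha(1))$ for the produced $\alpha$ would require every $\gamma\in B_0$ to satisfy $\sup(U\cap\gamma)=\gamma$, i.e.\ $B_0\s A_0\cap\acc^+(U)$; this intersection can be empty (let $A_0$ consist of successor ordinals, while $\acc^+(U)$ contains only limit ordinals), and ``sitting just above a point of $\acc^+(U)$'' does not help, since $\min(C_\alpha)$ may land between that point and $C_\alpha(1)$. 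To be fair, the paper's write-up is also silent on this point; the honest repair exploits the latitude in the statement of the lemma --- only the \emph{existence} of some sequence is asserted --- by passing, say, to $C_\alpha^-:=C_\alpha\bks\{\min(C_\alpha)\}$ (still a $\square_\lambda$-sequence) and shifting the targets by one index, so that the formerly uncontrolled minimum becomes the controlled point $C_\alpha(1)$. The same repair, not ``boundary tailoring,'' is what pins position $0$ into $A_0$ in the smooth version.
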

\begin{proof} The implications $(3_{strong})\Rightarrow (3)\Rightarrow (3_{smooth})\Rightarrow (3_{succinct})$ are straight-forward,
hence we focus on showing that $(3_{succinct})\Rightarrow (3_{strong})$.

Suppose that $\langle C_\alpha\mid\alpha<\lambda^+\rangle$
witnesses the succinct version of $\sc_\lambda$.
Suppose that $\langle A_i\mid i<\lambda\rangle$
is a sequence of unbounded subsets of $\lambda^+$,
and that $U$ is some cofinal subset of $\lambda^+$.
Recursively define a function $f:\lambda^+\rightarrow\lambda^+$
insuring that:
\begin{itemize}
\item $f(0)>\min(U)$;
\item $f(\alpha)>\min(U\bks(\sup(f[\alpha])+1))$ for all nonzero $\alpha<\lambda^+$;
\item if $\alpha<\lambda^+$ and $i$ is the unique ordinal $<\lambda$,
such that $\alpha=\lambda\cdot\gamma+i$ for some $\gamma<\lambda^+$, then $f(\alpha)\in A_i$;
\end{itemize}
Of course, there is no problem in constructing such a function.

Now, for all $i<\lambda$, let $A_i':=\{ f(\lambda\cdot\gamma+i)\mid \gamma<\lambda^+\}$,
and note that $A_i'$ is a cofinal subset of $A_i$.
Finally, by the hypothesis, for every limit $\theta<\lambda$, we can find some $\alpha<\lambda^+$
such that $\otp(C_\alpha)=\theta$, and $C_\alpha(i+1)\in A_i'$ for all $i<\theta$.
Evidently, for all $i<\theta$, $C_\alpha(i)<\beta<C_\alpha(i+1)$ for some $\beta\in U$.
\end{proof}

Hereafter, we shall respect the original Definition \ref{def13}  when verifying the validity of $\sc_\lambda$.

\begin{lemma}\label{25} Suppose that $\lambda$ is an uncountable cardinal.

If $\sd_\lambda^{\Gamma}$ holds, for a subset $\Gamma\s\reg(\lambda^+)$ with $\sup(\Gamma)=\lambda$, then $\sc_\lambda$ is valid.
\end{lemma}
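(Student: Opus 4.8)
The plan is to turn the $\sd_\lambda^\Gamma$-sequence into an Ostaszewski square by using the built-in diamond to decode, \emph{coherently}, the entire $\lambda$-indexed family of targets, and then to read off the non-accumulation points of the new clubs from the decoded data. First I would note that $\sd_\lambda^\Gamma$ yields $\diamondsuit_{\lambda^+}$ — for every $A\s\lambda^+$ and every club $D$ there is $\alpha\in\acc(D)$ with $S_\alpha=A\cap\alpha$ — hence $\ch_\lambda$ holds and I may fix an enumeration $\{X_\gamma\mid\gamma<\lambda^+\}$ of $[\lambda\times\lambda^+]^{\le\lambda}$ in which each set occurs cofinally often. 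Writing $X^i_\gamma:=\{\xi\mid(i,\xi)\in X_\gamma\}$ and, exactly as in the proof of Lemma \ref{18}, localizing the guessing along $C_\alpha$ via
$$B^i_\alpha:=\bigcup\{X^i_\gamma\cap[\sup(C_\alpha\cap\gamma),\min(C_\alpha\bks(\gamma+1)))\mid\gamma\in S_\alpha\},$$
the coherence clause (1) of Definition \ref{31} gives $S_\beta=S_\alpha\cap\beta$ for $\beta\in\acc(C_\alpha)$, and the computation of Lemma \ref{18} then yields $B^i_\beta=B^i_\alpha\cap\beta$ for all such $\beta$ and all $i<\lambda$. As in that lemma, the assignment inverts: for a suitably encoded target $A$ built from $\langle A_i\mid i<\lambda\rangle$, the good guesses $\alpha$ satisfy $B^i_\alpha=A_i\cap\alpha$ simultaneously for all $i<\lambda$.

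Next I would define the new square. For each limit $\alpha$ I would build $C'_\alpha$ so that $\acc(C'_\alpha)=\acc(C_\alpha)$, and on each gap of $C'_\alpha$ cut out by consecutive members of $\acc(C_\alpha)\cup\{\alpha\}$ I insert non-accumulation points by a recursion that, at the $(i+1)_{th}$ step — where $i$ is the \emph{global} index, recoverable as $\otp(C'_\alpha\cap\cdot)$ and hence from the already-built $C'_{\beta_0}=C'_\alpha\cap\beta_0$ at the bottom $\beta_0$ of the gap — selects a point of $B^i_\alpha$ lying strictly above the previous entry and strictly above the next member of $C_\alpha$; the last demand forces a $C_\alpha$-point between successive entries. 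Since every datum used inside a gap ($C_\alpha$, the $B^i_\alpha$, and $C'_{\beta_0}$) is local and coheres, a routine induction shows $\langle C'_\alpha\mid\alpha<\lambda^+\rangle$ is a $\square_\lambda$-sequence: each $C'_\alpha$ is club in $\alpha$, the order-type bound $\otp(C'_\alpha)\le\lambda$ is maintained (using that $\lambda$ is additively closed and capping insertions, as for the sequence of Claim \ref{322}), and $\beta\in\acc(C'_\alpha)$ implies $C'_\beta=C'_\alpha\cap\beta$.

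Finally I would verify clause (3) of Definition \ref{def13}. Given unbounded $\langle A_i\mid i<\lambda\rangle$, a limit $\theta<\lambda$, and a club $D$, I encode $\langle A_i\rangle$ into a single $A\s\lambda^+$ and, using $\sup(\Gamma)=\lambda$, fix $\kappa\in\Gamma$ with $\kappa>\theta$. Applying Definition \ref{31}(2) to $A$, to a club $D'\s D$, and to $\kappa$ in the role of the cofinality parameter produces $\alpha$ with $C_\alpha\s D$, $B^i_\alpha=A_i\cap\alpha$ for all $i$, $\cf(\alpha)=\kappa$, and $\sup(\acc(C_\alpha))=\alpha$; thus $C'_\alpha$ is built entirely from correct targets, so $C'_\alpha(i+1)\in A_i$ for every $i<\otp(C'_\alpha)$ with an interleaved $C_\alpha\s D$ point, while $\otp(C'_\alpha)\ge\cf(\alpha)=\kappa>\theta$. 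The decisive move is the descent: the set $\{\otp(C'_\alpha\cap\beta)\mid\beta\in\acc(C'_\alpha)\}$ is exactly the collection of limit ordinals below $\otp(C'_\alpha)$, so, $\theta$ being such an ordinal, I may pick $\beta\in\acc(C'_\alpha)$ with $\otp(C'_\alpha\cap\beta)=\theta$. By coherence $C'_\beta=C'_\alpha\cap\beta$, whence $\otp(C'_\beta)=\theta$, $C'_\beta(i+1)=C'_\alpha(i+1)\in A_i$ for all $i<\theta$, and the interleaved points give clause (3)(b); so $\beta$ witnesses (3) and $\langle C'_\alpha\rangle$ is a $\sc_\lambda$-sequence.

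The main obstacle I expect is the bookkeeping in the middle step: arranging the recursion so that it hits $B^i_\alpha$ at the correct global index, keeps each limit point inside $\acc(C_\alpha)$ (so coherence is inherited from $C_\alpha$ and from the $B^i$'s), keeps a $C_\alpha$-point between successive entries, and still respects $\otp(C'_\alpha)\le\lambda$. The elegant payoff — and the reason $\sup(\Gamma)=\lambda$ suffices in place of $\Gamma=\reg(\lambda)$ — is that one never guesses \emph{at} cofinality $\cf(\theta)$: one guesses at some larger $\kappa\in\Gamma$ and recovers the exact order-type $\theta$ at an accumulation point below, where coherence delivers the desired club for free.
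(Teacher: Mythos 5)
Your proposal is correct and follows essentially the same route as the paper's proof of Lemma \ref{25}: decode the coherent diamond into a $\lambda$-indexed family of predicted sets, insert predicted points between consecutive accumulation points of $C_\alpha$ so that coherence is inherited from clause (1) of Definition \ref{31}, and obtain order-type exactly $\theta$ by guessing at some $\kappa\in\Gamma$ above $\theta$ and then descending to the $\theta$-th accumulation point of the new club --- precisely the paper's version of your ``decisive move''. The differences are implementation-level only: the paper decodes $S_\beta$ pointwise via a pairing bijection $\psi:\lambda\times\lambda^+\leftrightarrow\lambda^+$ (so it needs neither $\ch_\lambda$ nor the enumeration machinery of Lemma \ref{18}), and it replaces each point $\beta$ of $\nacc(\acc(C_\alpha))$ by a \emph{single} guessed point, taking $\min(Y^\beta_\alpha\cup\{\beta\})$ so the construction stays total --- this default choice, for the case where the decoded set misses the relevant interval, being the one bookkeeping detail your insertion recursion leaves implicit.
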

\begin{proof}
Let $\langle (C_\alpha,S_\alpha)\mid \alpha<\lambda^+\rangle$ witness the validity of $\sd_\lambda^{\Gamma}$.
Fix a bijection $\psi:\lambda\times\lambda^+\leftrightarrow\lambda^+$.
Fix a limit ordinal $\alpha<\lambda^+$.
Put $C_\alpha':=\acc(C_\alpha)$ in case that
$\sup(\acc(C_\alpha))=\alpha$, and let $C_\alpha'$ be some cofinal subset of $\alpha$ of order-type $\omega$, otherwise.
Next, for $\beta\in\nacc(C_\alpha')$, let:
\begin{itemize}
\item $X^\beta_\alpha:=\{\gamma\mid  \psi(\otp(\nacc(C_\alpha')\cap\beta),\gamma)\in  S_\beta\}$;
\item $Y^\beta_\alpha:=X^\beta_\alpha\cap(\min\left(C_\alpha\bks(\sup(C_\alpha'\cap\beta)+1)\right),\beta)$;
\item $\beta_\alpha:=\min(Y^\beta_\alpha\cup\{\beta\})$.
\end{itemize}

Evidently, $\sup(C_\alpha'\cap\beta)<\beta_\alpha\le\beta$.
For all limit $\alpha<\lambda^+$, put $$C_\alpha^\bullet:=\acc(C_\alpha')\cup\{\beta_\alpha\mid \beta\in\nacc(C_\alpha')\}.$$
\begin{claim}\label{251} $\langle C_\alpha^\bullet\mid \alpha<\lambda^+\rangle$ is a $\square_\lambda$-sequence.
\end{claim}
\begin{proof} It easy to see that for all $\alpha<\lambda^+$,
we have $\acc(C_\alpha^\bullet)=\acc(C_\alpha')$,
and $\otp(C_\alpha^\bullet)=\otp(C'_\alpha)\le\otp(C_\alpha)\le\lambda^+$.
Suppose that $\delta\in\acc(C_\alpha^\bullet)$. Then $C_\alpha'=\acc(C_\alpha)$, $\delta\in\acc(C_\alpha')$,
$C_\delta'=C_\alpha'\cap\delta$, and $C_\delta=C_\alpha\cap\delta$. In particular, $\beta_\alpha=\beta_\delta$ for all $\beta\in\nacc(C'_\delta)$,
and $C^\bullet_\beta=C^\bullet_\alpha\cap\beta$.
\end{proof}
\begin{claim} For every cofinal $D\s\lambda^+$,
every sequence $\langle A_i\mid i<\lambda\rangle$ of unbounded subsets of $\lambda^+$,
and every limit $\theta<\lambda$,
there exists some $\alpha<\lambda^+$ such that $\otp(C^\bullet_\alpha)=\theta$,
and if $\{ \alpha^\bullet_i\mid i<\theta\}$ denotes the increasing enumeration of $C^\bullet_\alpha$, then:
\begin{enumerate}
\item[(a)] $\alpha^\bullet_{i+1}\in A_{i}$ for all $i<\theta$;
\item[(b)] $(\alpha^\bullet_i,\alpha^\bullet_{i+1})\cap D\not=\emptyset$ for all $i<\theta$.
\end{enumerate}
\end{claim}
\begin{proof} Fix a limit $\theta<\lambda$. Put $A:=\psi[\bigcup_{i<\lambda}(\{i\}\times A_i)]$, and
$$D':=\left\{\alpha\in D\mid \psi[\lambda\times\alpha]=\alpha\right\}\cap \bigcap_{i<\lambda}\acc^+(A_i).$$

Since $\sup(\Gamma)=\lambda$, we may pick some $\alpha\in E^{\lambda^+}_{>\theta}$
such that $C_\alpha\s D'$, $S_\alpha=A\cap\alpha$ and $\sup(\acc(C_\alpha))=\alpha$.
In particular, $C_\alpha'=\acc(C_\alpha)$. Put $\epsilon=\otp(C^\bullet_\alpha)$.
Let $\{ \alpha^\bullet_i \mid i<\epsilon\}$
denote the increasing enumeration of $C^\bullet_\alpha$,
and let $\{ \alpha'_i \mid i<\epsilon\}$
denote the increasing enumeration of $C'_\alpha$.
Fix $i<\epsilon$. Then:
\begin{itemize}
\item $X^{\alpha'_{i+1}}_\alpha=\{\gamma\mid \psi(i,\gamma)\in  S_{\alpha'_{i+1}}\}$;
\item $Y^{\alpha'_{i+1}}_\alpha:=X^{\alpha'_{i+1}}_\alpha\cap(\min\left(C_\alpha\bks(\alpha'_{i}+1)\right),\alpha'_{i+1})$;
\item ${\alpha^\bullet_{i+1}}:=\min(Y^{\alpha'_{i+1}}_\alpha\cup\{{\alpha'_{i+1}}\})$.
\end{itemize}

So, $\alpha_i^\bullet\le \alpha_i'<\min(C_\alpha\bks \alpha_i'+1)<\alpha_{i+1}^\bullet\le\alpha_{i+1}$.

$\br$ By $C_\alpha\s D'\s D$, we infer that $(\alpha^\bullet_i,\alpha^\bullet_{i+1})\cap D\not=\emptyset$.

$\br$ By $\alpha_{i+1}'\in \acc(C_\alpha)$, we get that $\psi[\lambda\times\alpha_{i+1}']=\alpha_{i+1}'$
and $S_{\alpha'_{i+1}}=S_\alpha\cap\alpha'_{i+1}=A\cap\alpha'_{i+1}=\psi[\bigcup_{i<\lambda}\{i\}\times(A_i\cap\alpha'_{i+1})]$.
We also get that $\alpha_{i+1}'\in\acc^+(A_i)$, and hence $X^{\alpha'_{i+1}}_\alpha$ is a cofinal subset of $A\cap \alpha_{i+1}'$.
So $\alpha_{i+1}^\bullet<\alpha_{i+1}$, and $\alpha_{i+1}^\bullet\in A_i$.

Since $\theta$ is a limit ordinal, we get from Claim \ref{251} that $C^\bullet_{\alpha^\bullet_\theta}=\{\alpha^\bullet_i\mid i<\theta\}$
is a witness to the above properties (a) and (b).
\end{proof}
Thus, $\langle C_\alpha^\bullet\mid \alpha<\lambda^+\rangle$ exemplifies the validity of $\sc_\lambda$.
This completes the proof.
\end{proof}

\begin{thm}\label{301} Suppose that $\lambda$ is a successor cardinal, and $\lambda^{<\lambda}<\lambda^\lambda=\lambda^+$.

If $\square_\lambda$ holds, then so does $\sc_\lambda$.
\end{thm}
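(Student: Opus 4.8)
Write $\lambda=\mu^+$. Since $\lambda^\lambda=2^\lambda$ and $\lambda^{<\lambda}=\lambda^\mu=\max\{\mu^+,2^\mu\}$, the hypothesis $\lambda^{<\lambda}<\lambda^\lambda=\lambda^+$ says precisely that $\ch_\lambda$ holds and that $\lambda^{<\lambda}=\lambda$ (equivalently $2^\mu=\mu^+$); in particular $|{}^{<\omega}\lambda|=\lambda$ and $[\lambda\times\lambda\times\lambda^+]^{\le\lambda}$ may be enumerated in order-type $\lambda^+$, and $\ch_\lambda$ yields $\diamondsuit_{\lambda^+}$. By the equivalence of Clause (3) with $(3_{succinct})$ proved above, it suffices to produce a $\square_\lambda$-sequence $\langle C_\alpha\mid\alpha<\lambda^+\rangle$ such that for every sequence $\langle A_i\mid i<\lambda\rangle$ of unbounded subsets of $\lambda^+$ and every limit $\theta<\lambda$ there is $\alpha$ with $\otp(C_\alpha)=\theta$ and $C_\alpha(i+1)\in A_i$ for all $i<\theta$; note that no club-guessing is demanded by the target itself. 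The route of Lemma \ref{25} is unavailable here: it produces a component of order-type $\theta$ by restricting a guess of cofinality $>\theta$, but, $\lambda$ being a successor cardinal, any $\Gamma\s\reg(\lambda)$ has $\sup(\Gamma)=\mu<\lambda$, so only order-types $\theta<\mu$ are reachable, whereas $(3_{succinct})$ ranges over all limit $\theta<\mu^+$, in particular over $\theta\in[\mu,\lambda)$.

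The plan is therefore to guess at the single cofinality $\lambda$. I would produce a $\square_\lambda$-sequence $\langle C_\alpha\rangle$ with $\otp(C_\alpha)=\lambda$ and $\acc(C_\alpha)$ club in $\alpha$ for each $\alpha\in E^{\lambda^+}_\lambda$, together with a coherent companion $\langle S_\alpha\rangle$ (so $S_\beta=S_\alpha\cap\beta$ whenever $\beta\in\acc(C_\alpha)$) such that for every $A\s\lambda^+$ and every club $D\s\lambda^+$ there is $\alpha\in E^{\lambda^+}_\lambda$ with $\acc(C_\alpha)\s D$ and $S_\alpha=A\cap\alpha$. Granting this, fix $\langle A_i\rangle$, a bijection $\psi:\lambda\times\lambda^+\leftrightarrow\lambda^+$, put $A:=\psi[\bigcup_{i<\lambda}(\{i\}\times A_i)]$, and apply the guessing to $A$ and to the club $D:=\{\beta\mid\psi[\lambda\times\beta]=\beta\}\cap\bigcap_{i<\lambda}\acc^+(A_i)$. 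At each $\beta=\alpha'_{i+1}\in\acc(C_\alpha)$ one then has $\beta\in\acc^+(A_i)$, $\psi[\lambda\times\beta]=\beta$, and $S_\beta=A\cap\beta$ by coherence, so the drop $\beta\mapsto\beta_\alpha$ and the passage to $C^\bullet_\alpha$ of Lemma \ref{25} go through verbatim and deliver a $\square_\lambda$-sequence with $C^\bullet_\alpha(i+1)\in A_i$ for all $i<\lambda$. Since $\cf(\alpha)=\lambda>\theta$ for every limit $\theta<\lambda$, all prescribed order-types are attained simultaneously: by Claim \ref{251} the sequence $\langle C^\bullet_\alpha\rangle$ is coherent, so letting $\alpha^\bullet_\theta$ be its $\theta_{th}$ element (an accumulation point, as $\theta$ is a limit) gives $C^\bullet_{\alpha^\bullet_\theta}=\{\alpha^\bullet_i\mid i<\theta\}$, of order-type exactly $\theta$, with $(i+1)_{th}$ element $\alpha^\bullet_{i+1}\in A_i$ for all $i<\theta$. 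This verifies $(3_{succinct})$, hence $\sc_\lambda$.

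To construct the cofinality-$\lambda$ companion I would re-use the scaffolding of Theorem \ref{61}, now aimed at $E^{\lambda^+}_\lambda$ instead of at the sets $T_i$. First, collapsing a master club (as in Claim \ref{322}) arranges $\otp(C_\alpha)<\lambda$ below cofinality $\lambda$ and $\acc(C_\alpha)\s\{\beta\mid\psi[\lambda\times\beta]=\beta\}$, so that the $\psi$-closure half of $D$ is automatic and only the variable club $\bigcap_i\acc^+(A_i)$ remains to be met. Next I fix the coherent matrix $\langle H^j_\alpha\rangle$ of Claim \ref{436} (available since $|{}^{<\omega}\lambda|=\lambda$) and the $\ch_\lambda$-enumeration of the codes, and attempt to run the counterexample argument of Claim \ref{437} to extract a single good coordinate, from which $\langle S_\alpha\rangle$ and the guessing would be read off exactly as in Theorem \ref{61}.

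The heart of the matter — and the step I expect to be the main obstacle — is this extraction at cofinality $\lambda$. For a successor cardinal $\cf(\lambda)=\lambda$, so the pressing-down that closes Claim \ref{437} breaks down: there one selects a coordinate $j<\cf(\lambda)$ with $g^{-1}\{j\}$ cofinal in $\alpha$ using $\cf(\alpha)\neq\cf(\lambda)$, but now $g:C^*_\alpha\to\cf(\lambda)=\lambda$ maps a set of cofinality $\lambda$ into $\lambda$ and may have every fibre bounded. This is the ``gap-one'' configuration $E^{\mu^{++}}_{\mu^+}$, in which club-guessing is not a theorem of $\zfc$, so $\square_\lambda$ is indispensable. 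My plan is to install $\langle S_\alpha\rangle$ via $\diamondsuit_{\lambda^+}$ together with the coherent matrix $\langle H^j_\alpha\rangle$, so that for each fixed target $A$ the equation $S_\beta=A\cap\beta$ holds along club-many accumulation points $\beta\in\acc(C_\alpha)$; since $\sup(\acc(C_\alpha))=\alpha$ at cofinality $\lambda$, these assemble to $S_\alpha=A\cap\alpha$. The freedom to set $S_\beta$ lives at the accumulation points of smaller cofinality, while at $\alpha\in E^{\lambda^+}_\lambda$ the value $S_\alpha$ is forced from below; reconciling these — capturing a single target along club-many accumulation points at once, which is exactly what the pressing-down of Theorem \ref{61} achieved and what must be resurrected by other means when $\cf(\lambda)=\lambda$ — is where I expect the real work to lie.
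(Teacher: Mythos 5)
Your reductions at the two ends are fine: the arithmetic translation of the hypothesis, the observation that the route through Lemma \ref{25} is unavailable when $\lambda$ is a successor, and the closing step (passing, via coherence, from a guessed club to its initial segment of order-type exactly $\theta$) all match the paper. But the core construction is missing, and the route you propose for it is not merely incomplete --- it is provably impossible from the hypotheses. What you ask for is a $\square_\lambda$-sequence with a coherent companion $\langle S_\alpha\mid\alpha<\lambda^+\rangle$ such that for every $A\s\lambda^+$ and every club $D\s\lambda^+$ there is $\alpha\in E^{\lambda^+}_\lambda$ with $\acc(C_\alpha)\s D$ and $S_\alpha=A\cap\alpha$; this is in essence $\sd^{\{\lambda\}}_\lambda$. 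Since $\cf(\alpha)=\lambda>\omega$ gives $\sup(\acc(C_\alpha))=\alpha$, any such $\alpha$ lies in $\acc^+(D)\s D$, so $\{\alpha\in E^{\lambda^+}_\lambda\mid S_\alpha=A\cap\alpha\}$ would be stationary for every $A$, i.e., $\diamondsuit(E^{\lambda^+}_\lambda)$ would hold. The paper's optimality discussion following Theorem D points out that the King--Steinhorn model satisfies $\textsf{GCH}+\square_{\omega_1}$ while $\sd^\Gamma_{\omega_1}$ fails for $\Gamma=\{\aleph_0,\aleph_1\}$: guessing at cofinality $\lambda$ is exactly the clause that must be dropped (uniformization on $E^{\aleph_2}_{\aleph_1}$ in that model kills this kind of guessing). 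So the ``real work'' you defer to the last paragraph cannot be carried out; any correct proof must avoid guessing at $E^{\lambda^+}_\lambda$ altogether.

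The paper's actual proof does precisely that. Using the regularity of $\lambda$, it shows that the set $O$ of indecomposable $\gamma<\lambda$ for which $\{\alpha\in E^{\lambda^+}_{<\lambda}\mid\otp(C_\alpha)=\gamma\}$ is stationary is cofinal in $\lambda$; it fixes a club $\{\gamma_i\mid i<\lambda\}$ in $\lambda$ with each $\gamma_{i+1}\in O$, and guesses at the stationary sets $T_i$ of ordinals whose clubs have order-type $\gamma_{i+1}$. Because these order-types are $<\lambda$, Shelah-style club-guessing (the analogue of Claim \ref{351}) is available there --- exactly what fails at the gap-one set $E^{\lambda^+}_\lambda$ you aimed at. The coordinate-extraction problem you correctly identify (pressing down as in Claim \ref{437} breaks when $\cf(\lambda)=\lambda$) is solved by a different device: since $|C^*_\alpha|<\lambda$ for $\alpha\in T_i$ and $\lambda^{<\lambda}=\lambda$, the Engelking--Kar{\l}owicz theorem supplies functions $\langle f_j:\lambda^+\rightarrow\lambda\mid j<\lambda\rangle$ such that a \emph{single} $f_j$ agrees with the relevant code-selecting function on all of $C^*_\alpha$ simultaneously; no cofinal fiber is needed. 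Finally, for an arbitrary limit $\theta<\lambda$ one picks $i^*$ with $\gamma_{i^*+1}>\theta$ and takes the initial segment of order-type $\theta$ of the guessed club, which is where your last step reappears. The essential idea you are missing, then, is to guess at small-cofinality points whose order-types are indecomposable and cofinal in $\lambda$, with Engelking--Kar{\l}owicz replacing the pressing-down.
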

\begin{proof} Fix a $\square_\lambda$-sequence, $\langle C_\alpha\mid\alpha<\lambda^+\rangle$.
Put $$O:=\{ \gamma<\lambda \mid \gamma\text{ is indecomposable, and }\{\alpha\in E^{\lambda^+}_{<\lambda}\mid \otp(C_\alpha)=\gamma\}\text{ is stationary}\}.$$

\begin{claim} $O$ is a cofinal subset of $\lambda$.
\end{claim}
\begin{proof}
Note that since $\lambda$ is regular, we get that $\otp(C_\alpha)<\lambda$ for every $E^{\lambda^+}_{<\lambda}$.
Now, suppose that $\theta<\lambda$, and let us show that $O\bks\theta\not=\emptyset$.
Given a club $D\s\lambda^+$, let us pick some $\alpha\in E^{\lambda^+}_\lambda\cap\acc(D)$.
Since $\cf(\alpha)=\lambda$ is uncountable, we get that $D\cap\acc(C_\alpha)$ is a club of order-type $\lambda$,
so let us pick some $\beta\in D\cap\acc(C_\alpha)$ for which $\otp(C_\alpha\cap\beta)$ is indecomposable and $>\theta$.
Then $\beta\in D$ and $\otp(C_\beta)=\otp(C_\alpha\cap\beta)$ is as desired.
\end{proof}

Let $\Gamma:=\{\gamma_i\mid i<\lambda\}$ be the increasing enumeration of some club subset of $\lambda$,
with $\{\gamma_{i+1}\mid i<\lambda\}\s O$.
Denote $\gamma_{\lambda}=\lambda$.
For all limit $i\le\lambda$, denote $\Gamma_i:=\{\gamma_j\mid j<i\}$.
Next, for all limit $\epsilon\le\lambda$, we define the club $E_\epsilon\s\epsilon$ by letting:
$$E_\epsilon:=\begin{cases}
\Gamma_i,&\epsilon=\gamma_i\ \&\ i\text{ is limit},\\
\epsilon\bks\gamma_i+1,&\epsilon\in(\gamma_i,\gamma_{i+1}]\\
\end{cases}.$$

Now, given $C\s\lambda^+$, let $\epsilon_C:=\otp(C)$ and $\pi_C:\epsilon_C\rightarrow C$ denote the inverse collapse.
Next, for all limit $\alpha<\lambda^+$, put $C_\alpha':=\pi_{C_\alpha}``E_{\epsilon_{C_\alpha}}$.
Then $C_\alpha'$ is a subclub of $C_\alpha$ and in particular, $\acc(C_\alpha')\s\acc(C_\alpha)$.

\begin{claim}\label{312} For all $\alpha<\lambda^+$:
\begin{enumerate}
\item if $\otp(C_\alpha)=\gamma_{i+1}$ for some $i<\lambda$, then $\acc(C_\alpha')\s\acc(C_\alpha)$ and $\otp(C_\alpha')=\otp(C_\alpha)$.
\item if $\beta\in\acc(C_\alpha')$, then $C_\alpha'\cap\beta=C'_\beta$.
\end{enumerate}
\end{claim}
\begin{proof}
(1) If $\otp(C_\alpha)=\gamma_{i+1}$, then it follows from the definition of $C_\alpha'$,
that $C_\alpha'$ is a final segment of $C_\alpha$. Then $\acc(C_\alpha')\s\acc(C_\alpha)$.
Recalling that $\otp(C_\alpha)$ is indecomposable,
we also get that $\otp(C_\alpha)=\otp(C_\alpha')$.

(2) Suppose that $\alpha<\lambda^+$ and $\beta\in\acc(C_\alpha')$. Put $\epsilon:=\otp(C_\alpha)$,
and $\beta':=\pi^{-1}_{C_\alpha}(\beta)$. Then $\beta'\in\acc(E_\epsilon)$ and $\pi_{C_\beta}=\pi_{C_\alpha}\restriction\beta'$.

$\blacktriangleright$
If $\epsilon=\gamma_i$ for some limit $i\le\lambda$, then $E_\epsilon=\Gamma_i$, and hence,
$\beta'=\gamma_j$ for some limit $j<\lambda$. So
$\pi_{C_\beta}=\pi_{C_\alpha}\restriction\gamma_j$, and that $E_{\beta'}=\Gamma_j$.
In particular, $C_\beta'=\pi_{C_\beta}[\Gamma_j]=\pi_{C_\alpha}[\Gamma_i\cap\gamma_j]=C_\alpha'\cap\beta$.

$\blacktriangleright$
If $\epsilon\in(\gamma_i,\gamma_{i+1}]$ for some $i<\lambda$, then $\beta'\in(\gamma_i,\epsilon)$.
In particular, $\beta'\in(\gamma_i,\gamma_{i+1}]$, and hence $E_{\beta'}=\beta'\bks\gamma_i+1$.
So $C_\beta'=\pi_{C_\beta}[(\gamma_i,\beta')]=\pi_{C_\alpha}[(\gamma_i,\epsilon)\cap\beta']=C_\alpha'\cap\beta$.
\end{proof}

Given a club $D\s\lambda^+$, denote $$\drop(\alpha,D):=\{ \sup(D\cap\beta)\mid \beta\in C'_\alpha\bks \min(D)\}.$$

\begin{claim} For all $i<\lambda$, there exists a club $D_i\s\lambda^+$
such that for every club $E\s\lambda^+$, there exists some $\alpha\in\acc(D_i)$
with:
\begin{enumerate}
\item $\otp(C_\alpha)=\gamma_{i+1}$;
\item $\drop(\alpha,D_i)\s E$.
\end{enumerate}
\end{claim}
    \begin{proof} As in the proof of Claim \ref{351}.\end{proof}

For all $i<\lambda$, let $D_i\s\lambda^+$ be a club given by the previous claim.
Put $D^*:=\bigcap_{i<\lambda}D_i$,
and let $\pi:\lambda^+\rightarrow D^*$ denote  the inverse collapse.
For all limit $\alpha<\lambda^+$, let $$C_\alpha^*:=\pi^{-1}``\drop(\pi(\alpha),D^*).$$

\begin{claim}  $\langle C^*_\alpha\mid \alpha<\lambda^+\rangle$ is a $\square_\lambda$-sequence.
\end{claim}
\begin{proof} As in the proof of Claim \ref{232}.\end{proof}

For all $i<\lambda$, let
$$T_i:=\{ \alpha<\lambda^+\mid \acc(C_\alpha^*)=\acc(C_\alpha')\ \&\ \otp(C^*_\alpha)=\otp(C_\alpha)=\gamma_{i+1}\}.$$

\begin{claim}
For every club $D\s\lambda^+$ and every $i<\lambda$,
the set $$\{ \alpha\in T_i\mid C_\alpha^*\s D\}$$ is stationary.
\end{claim}
\begin{proof} Suppose that $D,E$ are club subsets of $\lambda^+$,
and that $i<\lambda$. Our aim is to find an $\alpha\in T_i$ such that  $C_\alpha^*\s D$, and $\alpha\in E$.

Put $E':=\{ \beta\in D\cap E\cap\acc(D^*)\mid \pi(\beta)=\beta\}$.
By the choice of $D_i$,
we may pick some $\alpha\in \acc(D_i)$ such that $\otp(C_\alpha)=\gamma_{i+1}$ and $\drop(\alpha,D_i)\s E'$.
By Claim \ref{312}, we infer that $\otp(C_\alpha')=\gamma_{i+1}$.

For every $\beta\in C_\alpha'$, we have $\sup(D_i\cap\beta)\in E'\s D^*\s D_i$.
So $\sup(D^*\cap\beta)=\sup(D_i\cap\beta)$ for all $\beta\in C_\alpha'$,
and hence $\drop(\alpha,D^*)=\drop(\alpha,D_i)$.
Since $\drop(\alpha,D^*)\s\{\beta<\lambda^+\mid \pi(\beta)=\beta\}$, we get that $C_\alpha^*=\drop(\alpha,D_i)$.

For every $\beta\in\acc(C_\alpha')$, we have $\sup(D_i\cap\beta)\in E'\s\acc(D^*)\s \acc(D_i)$.
So $\sup(D_i\cap\beta)\in\acc(D_i)$ for all $\beta\in\acc(C_\alpha')$, and hence $\acc(C_\alpha^*)=\acc(C_\alpha')$.
In particular, $\otp(C_\alpha^*)=\otp(C_\alpha')=\gamma_{i+1}$, and hence $\alpha\in T_i$.
\end{proof}

For all $i<\lambda$, let $S_i:=\bigcup\{\acc(C_\alpha^*)\cup\{\alpha\}\mid \alpha\in T_i\}$.

\begin{claim} For all  $i<j<\lambda$, $S_i\cap S_j=\emptyset$.

In particular, $|\{ i<\lambda\mid (\acc(C_\alpha^*)\cup\{\alpha\})\cap S_i\not=\emptyset\}|\le 1$ for all limit $\alpha<\lambda^+$.
\end{claim}
\begin{proof} Suppose not. Pick $\alpha_i\in T_i$, $\alpha_j\in T_j$ such that
$(\acc(C_{\alpha_i}')\cup\{\alpha_{i}\})\cap (\acc(C_{\alpha_j}')\cup\{\alpha_{j}\})\not=\emptyset$,
and let $\beta$ be some element of this nonempty set.
By $\alpha_i\in T_i$, and Claim \ref{312},
we get that $\beta\in\acc(C_{\alpha_i})\cup\{\alpha_i\}$.
So $\otp(C_\beta)\le\otp(C_{\alpha_i})=\gamma_{i+1}<\gamma_{j+1}=\otp(C_{\alpha_j})$.
In particular, $\beta\not=\alpha_j$. Namely, $\beta\in\acc(C_{\alpha_j})$.
As $C_\beta\s C_{\alpha_j}$ and $\otp(C_\beta)\le\gamma_{i+1}$, we get that $\pi_{C_{\alpha_j}}^{-1}(\beta)\le\gamma_{i+1}$.
By $C_{\alpha_j}'=\pi_{C_{\alpha_j}}``E_{\epsilon_{C_{\alpha_j}}}$ and $\beta\in\acc(C'_{\alpha_j})$, this means that $\gamma_{i+1}\cap E_{\epsilon_{C_{\alpha_j}}}\not=\emptyset$.
However, $\alpha_j\in T_j$, and hence $\epsilon_{C_{\alpha_j}}=\gamma_{j+1}$,
so $E_{\epsilon_{C_{\alpha_j}}}=\Omega_{j+1}$, and $\min(E_{\epsilon_{C_{\alpha_j}}})=\gamma_{j}+1>\gamma_j\ge\gamma_{i+1}$.
This is a contradiction.

Now, as in the proof of Claim \ref{435}, we get that
$|\{ i<\lambda\mid (\acc(C_\alpha^*)\cup\{\alpha\})\cap S_i\not=\emptyset\}|\le 1$ for all limit $\alpha<\lambda^+$.
\end{proof}

By $\lambda^{<\lambda}=\lambda$ and the Engelking-Kar{\l}lowicz theorem \cite{ek},
let us fix a sequence of functions
$\langle f_j:\lambda^+\rightarrow\lambda\mid j<\lambda\rangle$ with the property that for every $X\in[\lambda^+]^{<\lambda}$,
and every function  $g:X\rightarrow\lambda$, there exists some $j<\lambda$ with $g\s f_j$.

By $\lambda^\lambda=\lambda^+$, let $\{ X_\beta\mid\beta<\lambda^+\}$ be some enumeration of $[\lambda\times\lambda\times\lambda^+\times\lambda^+]^{\le\lambda}$.
Let $\langle \psi_\delta:\lambda\rightarrow\delta\mid\delta<\lambda^+\rangle$ be some sequence of surjections.
Denote $\varphi_\beta(\gamma):=X_{\psi_\beta(\gamma)}$.
Next, given $j<\lambda$, $\alpha<\lambda^+$, and $\beta\in\nacc(C^*_\alpha)$,
let
\begin{itemize}
\item $X^{\beta}_{\alpha,j}=\{ \eta\mid(j,\otp(\nacc(C_\alpha^*)\cap\beta),\eta,\nu)\in \varphi_{\min(C^*_\alpha\bks\beta+1)}(f_j(\beta)),\nu<\eta\}$;
\item $Y^{\beta}_{\alpha,j}:=X^{\beta}_{\alpha,j}\cap(\sup(C^*_\alpha\cap\beta),\beta]$;
\item $j(\alpha,\beta):=\min(Y^{\beta}_{\alpha,j}\cup\{{\beta}\})$.
\end{itemize}

Note that $\sup(C_\alpha^*\cap\beta)<j(\alpha,\beta)\le\beta$.
Next, for all limit $\alpha<\lambda^+$, and $j<\lambda$, let:
$$C_\alpha^j:=\acc(C_\alpha^*)\cup\{ j(\alpha,\beta)\mid \beta\in\nacc(C_\alpha^*)\}.$$
\begin{claim} Fix $i<\lambda$. Then there exists some $j<\lambda$ with the following property.

for every club $D\s\lambda^+$,
every sequence
$\overrightarrow A=\langle A_\iota\mid \iota<\gamma_{i+1}\rangle$
of unbounded subsets of $\lambda^+$,
there exists some $\alpha\in T_i$ such that:
\begin{enumerate}
\item the inverse collapse of $\nacc(C^j_\alpha)$ belongs to $\prod\overrightarrow{A}$;
\item the open interval $(\sup(C^j_\alpha\cap\beta),\beta)$ meets $D$, for all $\beta\in\nacc(C^j_\alpha)$.
\end{enumerate}
\end{claim}
\begin{proof} Suppose not. Then for every $j<\lambda$, let us pick a club $D_j\s\lambda^+$,
and a sequence $\langle A^j_\iota\mid \iota<\gamma_{i+1}\rangle$ witnessing the failure of the above.
Put $$X:=\{ (j,\iota,\eta,\nu)\mid j<\lambda, \iota<\gamma_{i+1}, \eta\in A^j_\iota,\nu\in D_j\}. $$
Define $f:\lambda^+\rightarrow\lambda^+$ by letting for all $\epsilon<\lambda^+$:
$$f(\epsilon):=\min\{\varepsilon<\lambda^+\mid X\cap(\lambda\times\lambda\times\epsilon\times\epsilon)=X_{\varepsilon}\}.$$
Put $$D:=\{ \delta\in\bigcap_{j<\lambda}\acc(D_j) \mid f[\delta]\s\delta\}\cap\bigcap\{\acc^+(A^j_\gamma)\mid (j,\iota)\in\lambda\times\gamma_{i+1}\}.$$

Pick $\alpha\in T_i$ such that $C_\alpha^*\s D$.
Since $f[\delta]\s\delta$ for all $\delta\in C_\alpha^*$,
we get that $f(\beta)<\min(C_\alpha^*\bks\beta+1)$ for all $\beta\in C_\alpha^*$.
Hence, we may define $g:C_\alpha^*\rightarrow\lambda$ by letting for all $\beta\in C_\alpha^*$
$$g(\beta):=\min\{\varrho<\lambda\mid \psi_{\min(C^*_\alpha\bks\beta+1)}(\varrho)=f(\beta)\}.$$

Pick $j<\lambda$
such that $g\s f_j$.
Next, fix $\beta'\in\nacc(C^j_\alpha)$, and denote $\iota=\otp(\nacc(C_\alpha^j)\cap\beta')$.
Then $\beta'=j(\alpha,\beta)$ for $\beta:=(C_\alpha^*\bks\beta')$,
and $\otp(\nacc(C_\alpha^*)\cap\beta)=\iota$.
Also, for every $\beta\in C_\alpha^*$, we have:
$$X\cap(\lambda\times\lambda\times\beta\times\beta)=X_{f(\beta)}=X_{\psi_{\min(C^*_\alpha\bks\beta+1)}(g(\beta))}=\varphi_{\min(C^*_\alpha\bks\beta+1)}(g(\beta)),$$
and hence
$$X\cap(\lambda\times\lambda\times\beta\times\beta)=\varphi_{\min(C^*_\alpha\bks\beta+1)}(f_j(\beta)).$$

As $\beta\in C_\alpha^*\s D$, we get that $\beta\in\acc(D_j)\cap\acc(A^\iota_j)$.
In particular, there exists $\nu<\eta<\beta$ with $\nu>\sup(C_\alpha^*\cap\beta)$ such that $\nu\in D_j$
and $\eta\in A^\iota_j$. In other words, the set $Y^{\beta}_{\alpha,j}$ is non-empty,
 $\beta'=j(\alpha,\beta)\in A^\iota_j$, and $\sup(C_\alpha^j\cap\beta')\le \sup(C^*_\alpha\cap\beta)<\nu<j(\alpha,\beta)=<\beta$ for some $\nu\in D_j$.
So, $\beta'\in A^{\otp(\nacc(C_\alpha^j)\cap\beta)}_j$ and $\sup(C_\alpha^j\cap\beta',\beta')\cap D_j\not=\emptyset$.
This is a contradiction to the choice of $D_j$ and $\langle A^\iota_j\mid \iota<\gamma_{i+1}\rangle$.\end{proof}

For all $i<\lambda$, let $j_i<\lambda$ be given by the previous claim.
Fix a limit ordinal $\alpha<\lambda^+$.
As we have already noticed, for all $i<\lambda$,
$\acc(C^{j_i}_\alpha)=\acc(C^*_\alpha)$ and $C^{j_i}_\alpha\cap(\sup(C^*_\alpha\cap,\beta),\beta]$ is a singleton for all $\beta\in\nacc(C^*_\alpha)$.
Consequently, the following defines a club subset of $\alpha$, with the same order-type as $C^*_\alpha$.
$$C_\alpha^\bullet:=\begin{cases}C_\alpha^*,&(\acc(C_\alpha^*)\cup\{\alpha\})\cap\bigcup_{i<\lambda}S_i=\emptyset\\
C_\alpha^{j_i},&(\acc(C_\alpha^*)\cup\{\alpha\})\cap S_i\not=\emptyset
\end{cases}$$

\begin{claim} $\overrightarrow C:=\langle C_\alpha^\bullet\mid \alpha<\lambda^+\rangle$ is a $\sc_\lambda$-sequence.
\end{claim}
\begin{proof} We commence with verifying that $\overrightarrow C$ is coherent.
Suppose that $\delta\in\acc(C^\bullet_\alpha)$. Since $\acc(C^\bullet_\alpha)=\acc(C^*_\alpha)$,
we get that $C^*_\delta=C^*_\alpha\cap\delta$. We now distinguish three cases.

$\br$ Suppose that $(\acc(C_\delta^*)\cup\{\delta\})\cap S_i\not=\emptyset$
for some $i<\lambda$. Then $(\acc(C_\alpha^*)\cup\{\alpha\})\cap S_i\not=\emptyset$
and $C_\alpha^\bullet=C^{j_i}_\alpha, C_\delta^\bullet=C^{j_i}_\delta$ for this unique $i<\lambda$.
A quick look at the its definitions, shows that in this case $X_{\alpha,j_i}^\beta=X_{\beta,j_i}^\beta$,
and $Y_{\alpha,j_i}^\beta=Y_{\beta,j_i}^\beta$ for all $\beta\in\nacc(C^*_\delta)$.
So $\nacc(C^{j_i}_\alpha)\cap\delta=\nacc(C^{j_i}_\delta)$, and hence $C^\bullet_\alpha\cap\delta=C^\bullet_\delta$.

$\br$ Suppose that $C^\bullet_\alpha=C^*_\alpha$ and $C^\bullet_\delta=C^*_\delta$.
Then $C^\bullet_\alpha\cap\delta=C^\bullet_\delta$, and we are done.

$\br$ Neither of the above cases. This means that $(\acc(C_\delta^*)\cup\{\delta\})\cap\bigcup_{i<\lambda}S_i=\emptyset$,
while, $\gamma\in(\acc(C^*_\alpha)\cup\{\alpha\})\cap S_i$ for some $i<\lambda$.
Note that by definition of $S_i$, we get that $\acc(C^*_\gamma)\cup\{\gamma\}\s S_i$. Now, if $\gamma>\delta$,
then $\delta\in\acc(C^*_\gamma)$, contradicting the fact that $(\acc(C_\delta^*)\cup\{\delta\})\cap S_i=\emptyset$.
If $\gamma<\delta$, then $\gamma\in (\acc(C^*_\alpha)\cup\{\alpha\})\cap \delta=C^*_\delta$,
contradicting the very same fact. Altogether, this case does not exist.

Finally, let us verify the guessing property.
Fix a club $D\s\lambda^+$,
a sequence of unbounded subsets of $\lambda^+$, $\overrightarrow A=\langle A_i\mid i<\lambda\rangle$,
and some limit ordinal $\theta<\lambda$. Fix a large enough $i^*<\lambda$ such that $\gamma_{i^*+1}>\theta$.
By the choice of $j_{i^*}$, we may pick some $\alpha\in T_{i^*}$ for which
\begin{enumerate}
\item the inverse collapse of $\nacc(C^{j_{i^*}}_\alpha)$ belongs to $\prod(\overrightarrow{A}\restriction \gamma_{i^*+1})$;
\item the open interval $(\sup(C^{j_{i^*}}_\alpha\cap\beta),\beta)$ meets $D$, for all $\beta\in\nacc(C^{j_{i^*}}_\alpha)$.
\end{enumerate}
Let $\alpha^\bullet$ be the unique element of $C^{j_{i^*}}_\alpha$ with $\otp(C^{j_{i^*}}_\alpha\cap\alpha^\bullet)=\theta$.
Then $C^\bullet_{\alpha^\bullet}= C^{j_{i^*}}_\alpha\cap\alpha^\bullet$,
and if $\{ \alpha^\bullet_i\mid i<\theta\}$ denotes the increasing enumeration of $C_\alpha$, then
$\alpha_{i+1}\in A_i$ and $(\alpha_i,\alpha_{i+1})\cap D\not=\emptyset$ for all $i<\theta$.
\end{proof}
We have demonstrated the validity of $\sc_\lambda$, and hence the proof is complete.
\end{proof}

We conclude this section with an observation concerning the effect of $\sc_\lambda$ on cardinal arithmetic.
\begin{prop}\label{p34} $\sc_\lambda$ entails $\ch_\lambda$ for every singular cardinal $\lambda$.
\end{prop}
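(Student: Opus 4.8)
The plan is to turn the pure existence statement of $\sc_\lambda$ into an \emph{injection} of short sequences of ordinals into $\lambda^+$, and then to bound $2^\lambda$ by routine cardinal arithmetic that uses the singularity of $\lambda$ in exactly one place. Throughout I fix a $\sc_\lambda$-sequence $\overrightarrow C=\langle C_\alpha\mid\alpha<\lambda^+\rangle$.

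The core of the argument is an encoding lemma: for every limit ordinal $\theta<\lambda$ and every cardinal $\sigma\le\lambda$, one has $\sigma^{|\theta|}\le\lambda^+$. To prove it I would first fix a partition $\langle B^v\mid v<\sigma\rangle$ of $\lambda^+$ into $\sigma$ pieces, each of size $\lambda^+$; this is possible since $\sigma\le\lambda<\lambda^+$, and because a subset of $\lambda^+$ is unbounded if and only if it has size $\lambda^+$ (a bounded set has size $\le\lambda$, while $\cf(\lambda^+)=\lambda^+$), each $B^v$ is unbounded in $\lambda^+$. Then, to each $g\in{}^{\theta}\sigma$ I associate the length-$\lambda$ sequence $\langle A_i^g\mid i<\lambda\rangle$ given by $A_i^g:=B^{g(i)}$ for $i<\theta$ and $A_i^g:=B^0$ for $\theta\le i<\lambda$; every $A_i^g$ is unbounded. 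Applying Clause (3) of Definition \ref{def13} with the club $D:=\lambda^+$ produces some $\alpha_g<\lambda^+$ with $\otp(C_{\alpha_g})=\theta$ and $C_{\alpha_g}(i+1)\in B^{g(i)}$ for all $i<\theta$ (note $i+1<\theta$ because $\theta$ is a limit ordinal, so $C_{\alpha_g}(i+1)$ is a genuine element). The assignment $g\mapsto\alpha_g$ is injective: if $\alpha_g=\alpha_{g'}=\alpha$, then $C_\alpha(i+1)\in B^{g(i)}\cap B^{g'(i)}$ for every $i<\theta$, so pairwise disjointness of the $B^v$ forces $g(i)=g'(i)$. Hence $\sigma^{|\theta|}=|{}^{\theta}\sigma|\le\lambda^+$.

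I would then feed the lemma two inputs. Taking $\sigma=2$ and $\theta=\mu$ for an arbitrary infinite cardinal $\mu<\lambda$ yields $2^\mu\le\lambda^+$, and hence $2^{<\lambda}\le\lambda^+$. Taking $\sigma=\lambda$ and $\theta=\cf(\lambda)$ yields $\lambda^{\cf(\lambda)}\le\lambda^+$; this is the one step where singularity is indispensable, since it is exactly what makes $\cf(\lambda)$ a limit ordinal strictly below $\lambda$ and thus an admissible value of $\theta$. Finally I combine these via the standard identity $2^\lambda=(2^{<\lambda})^{\cf(\lambda)}$: from $2^{<\lambda}\le\lambda^+$ I get $2^\lambda\le(\lambda^+)^{\cf(\lambda)}$, and since $\cf(\lambda)<\lambda<\cf(\lambda^+)$ every map $\cf(\lambda)\to\lambda^+$ is bounded, so $(\lambda^+)^{\cf(\lambda)}=\lambda^+\cdot\lambda^{\cf(\lambda)}\le\lambda^+$. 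As $2^\lambda\ge\lambda^+$ trivially, this gives $2^\lambda=\lambda^+$, i.e.\ $\ch_\lambda$.

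I expect the only genuinely delicate point to be the encoding lemma, and within it the single idea that the nonaccumulation values $C_\alpha(i+1)$ can be \emph{steered} into prescribed pairwise-disjoint ``colours'' $B^{g(i)}$, so that the witness $\alpha_g$ remembers $g$. Everything after that is bookkeeping in cardinal arithmetic, and the structural reason the result is confined to singular $\lambda$ becomes transparent: for regular $\lambda$ the value $\theta=\cf(\lambda)=\lambda$ is not $<\lambda$, so this method cannot reach $\lambda^{\cf(\lambda)}=2^\lambda$.
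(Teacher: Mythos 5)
Your proof is correct, but it is organized quite differently from the paper's. The combinatorial kernel is the same in both arguments: steer the non-accumulation points $C_\alpha(i+1)$ into prescribed cells of a pairwise-disjoint partition of $\lambda^+$, and use disjointness to read the choice back off from the witness $\alpha$. Where you diverge is in how the full $\lambda$ bits of a subset of $\lambda$ get assembled from guesses of length $\theta<\lambda$. The paper does this \emph{internally}, by a two-stage nested application of clause (3): fixing a partition $\{H_i\mid i<\lambda\}$ and an increasing sequence $\langle\lambda_j\mid j<\cf(\lambda)\rangle$ converging to $\lambda$, it first observes that for each $j$ the set $B_j$ of ordinals $\beta$ with $\otp(C_\beta)=\lambda_j$ whose clubs code the first $\lambda_j$ values is unbounded (stationary), and then guesses a club of order type $\cf(\lambda)$ threading through the $B_j$'s; the decoding map $\alpha\mapsto X_\alpha$ reads two levels of $\nacc$ (the clubs of the members of $\nacc(C_\alpha)$), and the claim is precisely that $\mathcal P(\lambda)=\{X_\alpha\mid\alpha<\lambda^+\}$. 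You instead keep every application of clause (3) flat --- your encoding lemma $\sigma^{|\theta|}\le\lambda^+$ for limit $\theta<\lambda$, $\sigma\le\lambda$ --- and outsource the assembly to standard singular cardinal arithmetic: $2^{<\lambda}\le\lambda^+$ and $\lambda^{\cf(\lambda)}\le\lambda^+$ combine via $2^\lambda=(2^{<\lambda})^{\cf(\lambda)}\le(\lambda^+)^{\cf(\lambda)}=\lambda^+$, the last step using that maps $\cf(\lambda)\to\lambda^+$ are bounded. In effect, the paper's nesting plays exactly the role that the exponent identity plays for you. What each approach buys: yours is more modular, with a cleanly quotable encoding lemma and only routine arithmetic on top, and it makes transparent why the method stops at singular $\lambda$; the paper's yields a single uniformly definable surjection $\alpha\mapsto X_\alpha$ from $\lambda^+$ onto $\mathcal P(\lambda)$, defined directly from the square sequence and one fixed partition, with no appeal to arithmetic identities. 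Both proofs use singularity in the same essential place: $\cf(\lambda)<\lambda$ must be an admissible guessing length.
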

\begin{proof} Let $\overrightarrow C=\langle C_\alpha\mid \alpha<\lambda^+\rangle$ witness $\sc_\lambda$.
Let $\{ H_i\mid i<\lambda\}$ be a partition of $\lambda^+$ into $\lambda$-many mutually disjoint sets of size $\lambda^+$.
For all $\alpha<\lambda^+$, let $$X_\alpha=\{ i<\lambda\mid H_i\cap\nacc(C_\beta)\not=\emptyset\text{ for some }\beta\in \nacc(C_\alpha)\}.$$
\begin{claim} $\mathcal P(\lambda)=\{ X_\alpha\mid \alpha<\lambda^+\}$.
\end{claim}
\begin{proof} Suppose that $X\in\mathcal P(\lambda)$, and let us find some $\alpha<\lambda^+$ such that $X_\alpha=X$.
Fix a surjection $\pi:\lambda\rightarrow X$. Consider the sequence $\langle A_i\mid i<\lambda\rangle:=\langle H_{\pi(i)}\mid i<\lambda\rangle$.
Let $\langle \lambda_j\mid j<\lambda^+\rangle$ denote a strictly increasing sequence of regular cardinals converging to $\lambda$.
By the choice of $\overrightarrow C$, the set $$B_j:=\{ \beta<\lambda^+\mid \otp(C_\beta)=\lambda_j\ \&\ i<\lambda_j\Rightarrow C_\beta(i+1)\in A_i\}$$
is stationary for all $j<\cf(\lambda)$. Thus, appealing again to the defining properties of $\overrightarrow C$,
one can find some $\alpha<\lambda^+$ such that $\otp(C_\alpha)=\cf(\lambda)$
and $C_\alpha(j+1)\in B_j$ for all $j<\cf(\lambda)$. We claim that $X_\alpha=X$.

$\blacktriangleright$  Suppose that $\delta\in X$, and let us show that $\delta\in X_\alpha$.

Fix $i<\lambda$ such that $\pi(i)=\delta$,
and a large enough $j<\cf(\lambda)$ such that $i<\lambda_j$. Put $\beta:=C_\alpha(j+1)$.
Then $\beta\in B_j$, and in particular $C_\beta(i+1)\in A_i=H_{\pi(i)}$. So, we have found some
$\beta\in\nacc( C_\alpha)$ such that $\nacc(C_\beta)\cap H_{\pi(i)}\not=\emptyset$, and hence $\delta=\pi(i)\in X_\alpha$.

$\blacktriangleright$  Suppose that $\delta\in X_\alpha\bks X$, and let us meet a contradiction.

Fix $\beta\in\nacc(C_\alpha)$ such that $H_\delta\cap\nacc(C_\beta)\not=\emptyset$.
As $\beta\in\nacc(C_\alpha)$, we may fix some $j<\cf(\lambda)$ such that $\beta=C_\alpha(j+1)$.
So, $\beta\in B_j$, and hence $\otp(C_\beta)=\lambda_j$ and we may find some $i<\lambda_j$ such that $C_\beta(i+1)\in H_\delta$.
It follows that $A_i\cap H_\delta\not=\emptyset$, that is, $H_{\pi(i)}\cap H_\delta\not=\emptyset$,
which must mean that $\pi(i)=\delta$. In particular, $\delta\in\im(\pi)=X$. This is a contradiction.
\end{proof}
\end{proof}

\npg\section{An homogenous Souslin tree}\label{sec4}
In \cite[$\S6.1$]{jensen}, Jensen constructs a $\lambda^+$-Souslin tree for any cardinal $\lambda$,
provided that $V=L$. In \cite[$\S4$]{devlin}, Devlin extracts the actual hypotheses used in Jensen's construction, as follows.
\begin{thm}[Jensen, \cite{devlin}]\label{41} Assume $\gch$. Let $\lambda$ be an uncountable cardinal for which $\square_\lambda$ holds.
Then there exists a $\lambda^+$-Souslin tree.
\end{thm}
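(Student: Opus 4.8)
The plan is to carry out the classical Jensen construction, using $\square_\lambda$ to tame the limit levels and $\gch$ to seal antichains. I would build a tree $T\s{}^{<\lambda^+}\lambda$, ordered by end-extension, by recursion on its levels $\langle T_\alpha\mid \alpha<\lambda^+\rangle$, maintaining throughout that $T$ is \emph{normal}: every node splits into at least two immediate successors, and every node extends to all higher levels. Identifying $T\restriction\alpha:=\bigcup_{\beta<\alpha}T_\beta$ with an ordinal ${}<\lambda^+$ via a fixed level-by-level coding, I would first invoke $\gch$ --- concretely $2^\lambda=\lambda^+$ together with $\lambda\ge\omega_1$, which yields $\diamond_{\lambda^+}$ --- to fix a $\diamond_{\lambda^+}$-sequence $\langle Z_\alpha\mid \alpha<\lambda^+\rangle$ guessing subsets of $T\restriction\alpha$; the $Z_\alpha$ will be read as candidate maximal antichains. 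The successor step is routine: let every $t\in T_\alpha$ acquire two (or $\lambda$-many) immediate successors in $T_{\alpha+1}$.

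The heart of the argument is the limit step, where $\square_\lambda=\langle C_\alpha\mid\alpha<\lambda^+\rangle$ enters. For a limit $\alpha$ I would add to $T_\alpha$ only a \emph{canonical} family of cofinal branches of $T\restriction\alpha$: for each node $t$ below $\alpha$ I would define a branch $b^\alpha_t\supseteq t$ by recursion along $C_\alpha$, taking unions at points of $\acc(C_\alpha)$ and extending by a fixed rule across $\nacc(C_\alpha)$. The coherence law $C_\beta=C_\alpha\cap\beta$ for $\beta\in\acc(C_\alpha)$ forces $b^\alpha_t\restriction\beta=b^\beta_t$, so these branches glue across limit levels; this is exactly what guarantees both that every node extends upward through every limit and that at most $|T\restriction\alpha|\le\lambda$ branches are created, keeping the tree $\lambda^+$-narrow even at levels of cofinality $\lambda$. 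To seal antichains I would confine the genuine steering to $\alpha\in E^{\lambda^+}_\omega$: there one may take $\otp(C_\alpha)=\omega$, so $\acc(C_\alpha)=\emptyset$ and the coherence constraint is vacuous, leaving full freedom to push each $b^\alpha_t$ up through an element of the guessed antichain $Z_\alpha$ (possible by its maximality in $T\restriction\alpha$). At limits of uncountable cofinality I would add the coherent branches without steering.

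Verifying the outcome is then mostly bookkeeping. Narrowness: an easy induction gives $|T_\beta|\le\lambda$, whence $|T\restriction\alpha|\le|\alpha|\cdot\lambda=\lambda$ for every $\alpha<\lambda^+$, and the limit levels add no more than $\lambda$ branches. Normality follows from splitting and from the canonical branches witnessing upward extension. For the Souslin property, given a purported maximal antichain $A$ with $|A|=\lambda^+$, a standard club-and-elementarity argument produces a club of $\alpha$ for which $A\cap(T\restriction\alpha)$ is already maximal in $T\restriction\alpha$; intersecting with the stationary set $E^{\lambda^+}_\omega$ and with the guessing set for $\diamond_{\lambda^+}$ yields an $\alpha\in E^{\lambda^+}_\omega$ at which we sealed $A\cap(T\restriction\alpha)$, so every node of $T_\alpha$, hence every node of height $\ge\alpha$, lies above an element of $A$; thus $A\s T\restriction\alpha$ and $|A|\le\lambda$, a contradiction. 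Finally, a chain of size $\lambda^+$ would be a cofinal branch $b$, and the off-branch immediate successors $\{s_\xi\mid\xi<\lambda^+\}$ (available by splitting) form an antichain of size $\lambda^+$, already excluded.

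I expect the main obstacle to be the tension at the limit levels between coherence and sealing: the square sequence dictates the branches on the club $\acc(C_\alpha)$, whereas sealing demands the freedom to redirect them through $Z_\alpha$. Confining the sealing to $E^{\lambda^+}_\omega$ with $\otp(C_\alpha)=\omega$ resolves this, but one must still check that adding only the canonical branches at the uncountable-cofinality limits strands no node --- that is, that the gluing really does furnish, for each node and each higher limit level, a branch reaching it --- which is precisely where the full force of $\square_\lambda$-coherence is used.
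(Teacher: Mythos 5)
Your skeleton (a normal tree of height $\lambda^+$ with levels of size $\le\lambda$, $\diamondsuit$-sealing of antichains, canonical branches glued along the square sequence) is the right classical one; note that the paper does not reprove Theorem \ref{41} but quotes it from \cite{devlin}, its closest in-house analogue being Proposition \ref{43}. The genuine gap is in your resolution of the tension you yourself flag at the end. The conflict at a sealing level $\alpha$ is not governed by $C_\alpha$ but by the clubs of \emph{higher} levels: if $\alpha\in\acc(C_\delta)$ for some $\delta>\alpha$, then coherence forces the un-steered canonical limits $b^\delta_t\restriction\alpha=b^\alpha_t$ to belong to $T_\alpha$, whereas sealing demands that \emph{every} member of $T_\alpha$ lie above the guessed antichain. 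So arranging $\acc(C_\alpha)=\emptyset$ attacks the wrong direction, and in any case ``taking $\otp(C_\alpha)=\omega$ on $E^{\lambda^+}_\omega$'' is not available: if you replace each such $C_\alpha$ by an $\omega$-sequence you destroy coherence (for $\delta\in E^{\lambda^+}_{\omega_1}$ and $\beta:=C_\delta(\omega\cdot2)$, coherence forces $\otp(C_\beta)=\omega\cdot 2$), while if you seal only at those $\alpha$ where $\otp(C_\alpha)$ already equals $\omega$, that set contains the first accumulation point $C_\delta(\omega)$ of \emph{every} $C_\delta$ with $\otp(C_\delta)>\omega$, so it meets $\acc(C_\delta)$ everywhere --- the exact conflict. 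What your architecture needs is a \emph{stationary} $S$ with $S\cap\acc(C_\delta)=\emptyset$ for all $\delta<\lambda^+$, together with $\diamondsuit(S)$ concentrated on that $S$ (full $\diamondsuit(\lambda^+)$ does not suffice: its guessing set need not meet $S$ at all). This is precisely clause (4) in the paper's definition of $\sc_\lambda(S)$, and obtaining it is real work: one finds a stationary fiber of $\alpha\mapsto\otp(C_\alpha)$ and then thins the square sequence above that order type, as in the last part of the proofs of Theorem A and Lemma \ref{18}.

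Even after that repair, your appeal to $\diamondsuit$ collapses when $\cf(\lambda)=\omega$ (e.g.\ $\lambda=\aleph_\omega$), a case the theorem includes. Any $S$ avoiding every $\acc(C_\delta)$ is automatically non-reflecting (for $\cf(\beta)>\omega$ the set $\acc(C_\beta)$ is a club in $\beta$ disjoint from $S$), and your sealing procedure, which builds branches freely in $\omega$ steps, forces $S\s E^{\lambda^+}_\omega=E^{\lambda^+}_{\cf(\lambda)}$; but $\ch_\lambda$ provably yields $\diamondsuit(S)$ only for stationary $S\s E^{\lambda^+}_{\not=\cf(\lambda)}$ \cite{sh922}, and on non-reflecting subsets of $E^{\lambda^+}_{\cf(\lambda)}$ the required guessing can fail under $\gch+\square_\lambda$ (see the Note after Theorem A and \cite{sh186},\cite{kingsteinhorn}). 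This is exactly why the real construction --- Devlin's, and Proposition \ref{43} here --- is architected differently: the steering is folded into the \emph{uniform} extension rule, so that when a branch climbs across a point of $\nacc(C_\alpha)$ it consults the $\diamondsuit$-guess at that very point (the sets $P^j_i$ in Proposition \ref{43}), not a guess attached to $\alpha$. Such a rule is coherent, hence never conflicts with gluing, and it seals at any level, of any cofinality, all of whose $\nacc$-points guess the antichain correctly; that such levels exist is guaranteed by a $\clubsuit$-type property of the square sequence ($\nacc(C_\alpha)\s A$ for a prescribed cofinal $A$ --- clause (3) of the definition of $\sc_\lambda(S)$, clause (4) of Definition \ref{def41}), which is what Sections \ref{sec2} and \ref{sec3} extract from $\square_\lambda+\ch_\lambda$. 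Without this redesign your argument can cover at best the case $\cf(\lambda)>\omega$, not the full theorem.
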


In \cite{velickovic}, Veli\v{c}kovi\'{c} presents a construction of a different nature. More specifically,  he constructs an $\aleph_2$-Souslin tree
which is strongly homogenous, assuming $\sd_{\aleph_1}(E^{\aleph_2}_{\aleph_1})$.\footnote{
For the definition, as well as characterizations of \emph{strong homogeneity}, see \cite{gido}.}
This construction generalizes to yield this kind of $\lambda^+$-tree from an analogous hypothesis,
for every regular uncountable  $\lambda$.
Moreover, the regularity of $\lambda$ appears to be essential, as the argument uses the hypothesis
concerning the validity of $\sd_\lambda(S)$ for $S=E^{\lambda^+}_\lambda$ in
a way that will not work for $S\s E^{\lambda^+}_{<\lambda}$.

As $E^{\lambda^+}_\lambda$ happens to be an empty set
whenever $\lambda$ is a singular cardinal, the author wondered for  quite a while, how could this type of construction may be carried
in the absence of the notion of ``maximal cofinality''.
It turns out that the missing observation is that Veli\v{c}kovi\'{c}'s construction
may be rendered as an application of a particular form of $\sc_\lambda$, which we introduce in Definition \ref{def41} below,
and which makes sense also for a singular $\lambda$:

\begin{thm}[Veli\v{c}kovi\'{c}]\label{t41} If $\ch_\lambda+\sc_{\lambda,1}^{\{\lambda\},1}$ holds for an uncountable cardinal $\lambda$,
then there exists a strongly  homogenous $\lambda^+$-Souslin tree.
\end{thm}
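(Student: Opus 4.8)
The plan is to run Jensen's level-by-level construction of a $\lambda^+$-Souslin tree (the method behind Theorem~\ref{41}), but with the given $\sc_{\lambda,1}^{\{\lambda\},1}$-sequence $\langle C_\alpha\mid\alpha<\lambda^+\rangle$ playing the role of a $\diamondsuit$-sequence, and with enough algebraic structure carried along to force strong homogeneity. Concretely, I would realise $T$ as a downward-closed, levelwise subtree of $({}^{<\lambda^+}\lambda,\s)$, so that a node of height $\alpha$ is a function $t:\alpha\to\lambda$, and I would insist that $T$ be closed under coordinatewise modular addition by boundedly supported functions. That closure makes each translation $x\mapsto x+\bar s$ a height-preserving automorphism of $T$, and the family of such translations acts transitively on every level; this is precisely strong homogeneity, \emph{provided} every choice in the recursion is made invariantly along the orbits of this group. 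Here $\ch_\lambda$ is used to keep each level of size $\le\lambda$ (whence $|T|=\lambda^+$) and to fix a height-respecting bijective coding of the nodes of $T$ by the ordinals $<\lambda^+$.

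The recursion defines the levels $T_\alpha$. Successor levels are formed by appending each value $<\lambda$, subject to the group-closure requirement, and limit levels $\alpha$ with $\otp(C_\alpha)<\lambda$ are filled by continuing each lower node along the unique thread prescribed by $C_\alpha$; coherence of the square, i.e.\ $C_\beta=C_\alpha\cap\beta$ for $\beta\in\acc(C_\alpha)$, guarantees that these threads cohere so that $T_\alpha$ is well defined and the construction survives. The decisive levels are those $\alpha$ with $\otp(C_\alpha)=\lambda$ --- which is exactly why the superscript in $\sc_{\lambda,1}^{\{\lambda\},1}$ is $\Gamma=\{\lambda\}$ --- and the guessing property is invoked only there.

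For Souslinity I would argue by contradiction. Suppose $A$ is a maximal antichain with $|A|=\lambda^+$; under the chosen coding it is an unbounded subset of $\lambda^+$. On the club of levels $\alpha$ at which $A\cap(T\restriction\alpha)$ is already maximal in $T\restriction\alpha$, I would feed the constant sequence $A_i:=A$ into clause~(3) of Definition~\ref{def13}, in the parametrised form that permits $\theta=\lambda\in\Gamma$, to obtain a level $\alpha$ with $\otp(C_\alpha)=\lambda$ whose prescribed thread meets $A$ at the non-accumulation points $C_\alpha(i+1)$ and whose gaps meet the relevant club $D$. The intended consequence is that every node of $T_\alpha$ is built as the limit of a cofinal branch passing above a member of $A$; maximality then forces $A\s T\restriction\alpha$, contradicting $|A|=\lambda^+$. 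Thus antichains have size $\le\lambda$; since, in addition, the thinning imposed at the levels with $\otp(C_\alpha)=\lambda$ leaves no cofinal branch, $T$ has no chain of size $\lambda^+$ either, and so $T$ is $\lambda^+$-Souslin. Strong homogeneity is then read off from the equivariance of the recursion: the translations are automorphisms, and their transitivity on each level yields, for any two nodes of equal height, an automorphism sending one to the other.

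The main obstacle I foresee is reconciling the two competing requirements at the decisive levels. The branch-rerouting that seals the guessed antichain must be carried out in an automorphism-invariant fashion, or else group-closure --- and with it strong homogeneity --- collapses; yet insisting on group-closure restricts which branches may be continued and could, a priori, prevent routing every node above an element of $A$. This is exactly where the particular shape of the Ostaszewski square earns its keep: because clause~(3) pins down all of $\nacc(C_\alpha)$ simultaneously against the \emph{full} sequence $\langle A_i\mid i<\lambda\rangle$, one has enough room to perform the sealing coset by coset and thereby respect the group action. The surrounding bookkeeping --- coding maximal antichains as the sets $A_i$, verifying the club-reflection of maximality, and checking that the threaded limit node genuinely extends a member of $A$ --- I expect to be routine though delicate, and secondary to this equivariance point.
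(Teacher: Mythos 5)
There is a genuine gap at the heart of your Souslinity argument, namely in the sealing step. You feed (a coding of) the maximal antichain $A$ itself into the guessing clause, so that the points of $\nacc(C_\alpha)$ are codes of \emph{individual elements} of $A$, and you then want to route each translate of the thread above the decoded nodes. But at stage $i+1$ the recursion has available only the boundedly many decoded elements $\varphi^{-1}(C_\alpha(j+1))$, $j\le i$, and there is no reason that the node you are currently extending (in your formulation $t_{\alpha,i}+\bar s$, in the paper's $s*t^j_{\alpha,i}$) is compatible with \emph{any} of them: maximality of $A$ guarantees comparability with \emph{some} element of $A\cap(\T\restriction\gamma)$ --- a set of size possibly $\lambda$ --- but not with the particular elements your guess hands you. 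Since the tree branches at every node, most nodes of $\T\restriction C_\alpha(i)$ are incompatible with every one of the decoded elements, so the intended conclusion ``every node of $T_\alpha$ lies above a member of $A$'' does not follow and the contradiction never materializes. Equivariance (``coset by coset'') is not the real difficulty; access to the whole trace $A\cap\gamma$ at the guessing points is.

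What is missing is a second guessing device, and this is exactly how the paper proceeds in its in-paper analogue, Proposition \ref{43} (Theorem \ref{t41} itself is quoted from \cite{velickovic}, not proved here). There, $\ch_\lambda$ is used not merely for cardinality bookkeeping but to produce, via \cite{sh922}, a genuine $\diamondsuit(\lambda^+)$-sequence $\langle S_\gamma\mid\gamma<\lambda^+\rangle$. The recursion routes $s*t^j_{\alpha,i+1}$ above some $p$ with $\varphi(p)\in S_{\alpha^j_{i+1}}$, attending to every lower node $s$ cofinally often via the surjections $\psi_\beta$ and the pairing $f$ (this is the correct implementation of your coset-by-coset idea). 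In the verification, the square's guessing clause is applied \emph{not} to $\varphi[A]$ but to the stationary set $S=\{\gamma\in D\cap E\mid \varphi[A]\cap\gamma=S_\gamma\}$ of levels at which the diamond predicts $A$ correctly and $A\cap\gamma$ is a maximal antichain in $\T\restriction\gamma$; at such levels the construction really did know all of $A\cap\gamma$, and maximality supplies the compatibility that your version lacks. Note also that your appeal to ``clause (3) of Definition \ref{def13} in the parametrised form with $\theta=\lambda$'' conflates two distinct principles: $\sc_{\lambda,1}^{\{\lambda\},1}$ (Definition \ref{def41}) provides only single-set guessing, $\nacc(C_\alpha)\s A$ for one prescribed cofinal set, and not the sequence-guessing of Definition \ref{def13}; the latter is neither available under your hypothesis nor, once the diamond is in place, needed. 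The remainder of your outline (group closure as the homogeneity mechanism, coherence of threads at limit levels, disposing of chains via splitting) is sound and matches the paper's template.
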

\begin{defn}\label{def41} $\sc^{\Gamma,\mu}_{\lambda,\kappa}$ asserts the existence of a sequence $\langle \mathcal C_\alpha\mid\alpha<\lambda^+\rangle$ such that:
\begin{enumerate}
\item for all limit $\alpha<\lambda^+$, $\mathcal C_\alpha$ is a nonempty collection of club subsets of $\alpha$;
\item if $C\in\mathcal C_\alpha$, then $\otp(C)\le\lambda$, and if $\beta\in\acc(C)$,
then $C\cap\beta\in\mathcal C_\beta$;
\item $|\mathcal C_\alpha|\le\kappa$ for all $\alpha<\lambda^+$;
\item  for every cofinal $A\s\lambda^+$,
and every limit $\theta\in\Gamma$,
there exists some $\alpha<\lambda^+$ for which all of the following holds:
\begin{enumerate}
\item $|\mathcal C_\alpha|\le \mu$;
\item $\nacc(C)\s A$ for all $C\in\mathcal C_\alpha$.
\item $\theta$ divides $\otp(C)$ for all $C\in\mathcal C_\alpha$;
\end{enumerate}
\end{enumerate}
\end{defn}

In Proposition \ref{43} below, an homogenous $\lambda^+$-Souslin tree
is constructed from $\ch_\lambda+\sc_{\lambda,\lambda}^{\{\lambda\},\lambda}$. That is,
we weaken the conclusion of Theorem \ref{t41} from ``strongly homogenous'' to  ``homogenous'',
while reducing the hypothesis from $\sc_{\lambda,1}^{\{\lambda\},1}$ to $\sc_{\lambda,\lambda}^{\{\lambda\},\lambda}$.
This weakening allows the constructed tree to enjoy an optimal degree of completeness.
More importantly, the value of the reduction is witnessed by the results of the previous sections when combined with the following lemma.

\begin{lemma}\label{lemma42} If $\lambda=\lambda^{<\cf(\lambda)}$ is a singular cardinal,
then $\sc_\lambda$ entails $\sc_{\lambda,\lambda}^{\{\lambda\},1}$.
\end{lemma}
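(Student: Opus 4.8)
The plan is to upgrade a $\sc_\lambda$-sequence $\langle C_\alpha\mid\alpha<\lambda^+\rangle$ to a $\sc_{\lambda,\lambda}^{\{\lambda\},1}$-sequence $\langle\mathcal C_\alpha\mid\alpha<\lambda^+\rangle$ by assigning to each limit $\alpha$ a collection $\mathcal C_\alpha$ of club subsets of $\alpha$, each of order-type $\le\lambda$. The crux is the single parameter $\Gamma=\{\lambda\}$: clause (4) now demands, for a given cofinal $A\s\lambda^+$, some $\alpha$ with $|\mathcal C_\alpha|\le 1$, with $\nacc(C)\s A$ for the unique $C\in\mathcal C_\alpha$, and with $\lambda$ dividing $\otp(C)$. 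Since $\lambda$ is singular, $\lambda\mid\otp(C)$ forces $\otp(C)=\lambda$ (as $\otp(C)\le\lambda$), so I need a genuinely long club, all of whose non-accumulation points land in $A$. A single component $C_\alpha$ of the $\sc_\lambda$-sequence has order-type $\le\lambda$ but the guessing clause (3) only gives components of order-type $\theta$ for \emph{limit} $\theta<\lambda$, never $\theta=\lambda$ itself. The natural fix is to \emph{stitch together} $\cf(\lambda)$-many short guessed components into one club of order-type $\lambda$.

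First I would use $\lambda=\lambda^{<\cf(\lambda)}$ together with $\sc_\lambda$'s guessing to perform the assembly. Fix a strictly increasing continuous sequence $\langle\lambda_j\mid j<\cf(\lambda)\rangle$ of limit ordinals cofinal in $\lambda$. Given cofinal $A\s\lambda^+$, I would build, by recursion on $j<\cf(\lambda)$, an increasing continuous sequence $\langle\alpha_j\mid j<\cf(\lambda)\rangle$ of ordinals below $\lambda^+$ such that at each successor step I apply clause (3) of $\sc_\lambda$ with the single set-sequence $A_i:=A$ (for all $i$) and with $\theta:=\lambda_{j+1}$ restricted above $\alpha_j$, obtaining a component whose non-accumulation points, above $\alpha_j$, all lie in $A$. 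Concatenating these segments produces a club $C$ in $\alpha:=\sup_j\alpha_j$ of order-type $\lambda$ with $\nacc(C)\s A$. The coherence of the underlying $\square_\lambda$-sequence is what lets the pieces cohere, and clause (3)(b) — the interleaving of a club $D$ between successive elements — is what I would exploit to route the construction through the right tail.

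The definition of $\mathcal C_\alpha$ itself must respect clause (2): if $C\in\mathcal C_\alpha$ and $\beta\in\acc(C)$, then $C\cap\beta\in\mathcal C_\beta$. The clean way to arrange this is to set $\mathcal C_\alpha:=\{C_\alpha\}$ for \emph{every} limit $\alpha$, taking the $\sc_\lambda$-sequence as is, so that coherence is inherited verbatim from the square property (clause (2) of the definition of $\square_\lambda$) and clauses (1)–(3) of Definition \ref{def41} hold trivially with $\kappa=\lambda$ and indeed $|\mathcal C_\alpha|=1$. Then the only real content is clause (4), which I establish by the stitching argument above: the assembled $\alpha$ has $\otp(C_\alpha)=\lambda$ and $\nacc(C_\alpha)\s A$, giving (4)(a)–(c) with $\mu=1$. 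I expect the main obstacle to be ensuring that the concatenated club is genuinely a \emph{single} coherent $C_\alpha$ from the original sequence, rather than an ad hoc union — i.e., arranging the recursion so that the limit $\alpha$ we land on actually satisfies $\otp(C_\alpha)=\lambda$ and $\nacc(C_\alpha)\s A$ on the nose. This is where $\lambda^{<\cf(\lambda)}=\lambda$ is used: it guarantees enough room to absorb all the $\cf(\lambda)$-many guessing demands simultaneously (via a suitable bookkeeping of the sets $A_i$ within a single application of clause (3) at a sufficiently long limit $\theta$), so that one final application of $\sc_\lambda$ at order-type exactly $\lambda$'s cofinal structure yields the desired $\alpha$ in one stroke.
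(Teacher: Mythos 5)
Your diagnosis of the crux is correct: with $\Gamma=\{\lambda\}$, clause (4) of Definition \ref{def41} demands a club of order-type exactly $\lambda$ whose non-accumulation points all lie in $A$, whereas clause (3) of Definition \ref{def13} only guesses at limit $\theta<\lambda$, so some form of stitching $\cf(\lambda)$-many short guessed components is unavoidable. The gap is in your resolution. Having declared $\mathcal C_\alpha:=\{C_\alpha\}$ for every limit $\alpha$, clause (4) now requires an actual component of the original sequence with $\otp(C_\alpha)=\lambda$ and $\nacc(C_\alpha)\s A$, and no amount of bookkeeping inside applications of clause (3) can produce one: that clause is silent about components of order-type $\lambda$, and the club you assemble by concatenating guessed pieces is an ad hoc hybrid --- its initial segments at the accumulation points interior to the glued pieces are not of the form $C_\beta$, so it cannot coincide with any $C_\alpha$. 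Your final paragraph, which is exactly where the proof has to happen, asserts that $\lambda^{<\cf(\lambda)}=\lambda$ lets "one final application" of $\sc_\lambda$ land on such an $\alpha$ "on the nose", but gives no mechanism, and there cannot be one of this shape: with singleton collections everywhere you would be proving $\sc_{\lambda,1}^{\{\lambda\},1}$, which the paper explicitly leaves open for singular $\lambda$ (the first question of Section \ref{sec6}) and which is known to \emph{fail} under $\gch+\square_\lambda$ for regular $\lambda$. The obstruction is structural, not a matter of bookkeeping.

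What the paper does instead is to accept that the stitched clubs are new objects and to enlarge the collections accordingly; this is precisely what the parameter $\kappa=\lambda$ is for, and it is where $\lambda^{<\cf(\lambda)}=\lambda$ is actually used. Concretely, letting $C_\delta'$ denote the tail of $C_\delta$ past its $\cf(\lambda)$-th element, one defines $\varphi(c)$ to be $c$ together with the tail $C_\delta'\bks\sup(c\cap\delta)$ glued in below each $\delta\in\nacc(c)$ of cofinality $>\cf(\lambda)$. Given cofinal $A\s\lambda^+$, one applies $\sc_\lambda$ twice: first to see that each $A_i:=\{\delta<\lambda^+\mid \otp(C_\delta)=\lambda_i\ \&\ \nacc(C_\delta)\s A\}$ is unbounded (where $\langle\lambda_i\mid i<\cf(\lambda)\rangle$ is increasing and cofinal in $\lambda$), and then to guess a component $C_\alpha$ of order-type $\cf(\lambda)$ with $C_\alpha(i+1)\in A_i$ for all $i<\cf(\lambda)$; the hybrid $\varphi(C_\alpha)$ then has order-type $\lambda$ and $\nacc(\varphi(C_\alpha))\s A$. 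Coherence (clause (2) of Definition \ref{def41}) now forces the collection at every $\beta$ with $\otp(C_\beta)>\cf(\lambda)$ to contain all hybrids $\varphi(c\cup\{\beta\})$ for closed $c\in[\beta]^{<\cf(\lambda)}$, of which there are at most $\lambda^{<\cf(\lambda)}=\lambda$ --- whence $|\mathcal C_\beta|\le\lambda$ --- while at the points with $\otp(C_\alpha)\le\cf(\lambda)$, where the guessing takes place, the collection remains the singleton $\{\varphi(C_\alpha)\}$, which is what yields $\mu=1$. Your proposal is missing exactly this two-tier structure, and without it clause (4) is out of reach.
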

\begin{proof}
Let $\overrightarrow C=\langle C_\alpha\mid \alpha<\lambda^+\rangle$ witness $\sc_\lambda$.
For simplicity, assume that $C_{\alpha+1}=\emptyset$ for all $\alpha<\lambda^+$.
For every $\delta<\lambda^+$, denote $C_\delta':=\{\beta\in C_\delta\mid \otp(C_\delta\cap\beta)>\cf(\lambda)\}$.
For every subset $c\s\lambda^+$, denote $$\varphi(c):=c\cup \{C_\delta'\bks\sup(c\cap\delta)\mid \delta\in\nacc(c)\ \&\ \cf(\delta)>\cf(\lambda)\}.$$
Put $$S:=\{\alpha<\lambda^+\mid \otp(C_\alpha)\le\cf(\lambda)\}.$$

\begin{claim}\label{421} For $c\s\lambda^+$, we have:
\begin{enumerate}
\item if $c$ is closed, then so does $\varphi(c)$;
\item if $\otp(c)\le\cf(\lambda)$, then $\otp(\varphi(c))\le\lambda$;
\item if $\beta\in\acc(\varphi(c))\bks\acc(c)$, then $\beta\not\in S$.
\end{enumerate}
\end{claim}
\begin{proof}
(2) Evidently, if $\delta<\lambda^+$ and $\cf(\delta)>\cf(\lambda)$,
then $\otp(C_\delta)<\lambda$. It follows that if $\otp(c)\le\cf(\lambda)$,
then $\otp(\varphi(c)\cap\alpha)<\lambda$ for all $\alpha<\sup(c)$, and hence $\otp(\varphi(c))\le\lambda$.

 (3) Suppose that $\beta\in\acc(\varphi(c))\bks\acc(c)$.
If $\beta\in c$, then $\beta\in\acc(\varphi(c))\cap\nacc(c)$,
which must mean that $\cf(\beta)>\cf(\lambda)$. As $S\s E^{\lambda^+}_{\le\cf(\lambda)}$,
we infer that $\beta\not\in S$.
Next, suppose that $\beta\not\in c$. Then there exists some $\delta\in\nacc(c)\cap E^{\lambda^+}_{>\cf(\lambda)}$
such that $\beta\in\acc(C_\delta')$.  So $\beta\in\acc(C_\delta)$,
and $\otp(C_\beta)=\otp(C_\delta\cap\beta)$. Recalling that $\beta\in C_\delta'$, we get that $\otp(C_\delta\cap\beta) >\cf(\lambda)$,
so $\otp(C_\beta)>\cf(\lambda)$ and hence $\beta\not\in S$.
\end{proof}

Next, for all limit $\alpha<\lambda^+$, we define the following:
\begin{itemize}
\item If $\alpha\in S$, let $\mathcal C_\alpha:=\{ \varphi(C_\alpha)\}$;
\item If $\alpha\not\in S$, let
$\mathcal C_\alpha:=\{ \varphi\left(c \cup\{\alpha\}\right)\mid c\in[\alpha]^{<\cf(\lambda)}\text{ is closed}\}$.
\end{itemize}

By the preceding claim, for every limit $\alpha<\lambda^+$, every $C\in\mathcal C_\alpha$
is a club subset of $\alpha$ of order-type $\le\lambda$.
As $\lambda^{<\cf(\lambda)}=\lambda$, we also have that $|\mathcal C_\alpha|\le\lambda$. 
\begin{claim} If $C\in\mathcal C_\alpha$ and $\beta\in\acc(C)$, then $C\cap\beta\in\mathcal C_\beta$.
\end{claim}
\begin{proof} Suppose that $C,\alpha,\beta$ are as above.

$\blacktriangleright$ If $\alpha\in S$ and $\beta\in\acc(C_\alpha)$, then $C\cap\beta=\varphi(C_\alpha)\cap\beta=\varphi(C_\alpha\cap\beta)=\varphi(C_\beta)$,
$\otp(C_\beta)\le\otp(C_\alpha)\le\cf(\lambda)$, and hence $\mathcal C_\beta=\{C\cap\beta\}$.

$\blacktriangleright$ If $\alpha\in S$ and $\beta\not\in\acc(C_\alpha)$, put $c:=C_\alpha\cap\beta$.
Then $c\in[\beta]^{<\cf(\lambda)}$, and by Claim \ref{421}, $\beta\not\in S$.
If $\beta\in\nacc(C_\alpha)$, then it is clear that $C\cap\beta=\varphi(c\cup\{\beta\})$.
If $\beta\not\in\nacc(C_\alpha)$, then $\beta\in\acc(C_\delta')$ for $\delta:=\min(C_\alpha\bks\beta)$,
and since $C_\beta'=C_\delta'\cap\beta$, we get once again that $C\cap\beta=\varphi(c\cup\{\beta\})$.
Recalling that $\beta\not\in S$, we conclude that $C\cap\beta\in\mathcal C_\beta$.

$\blacktriangleright$ If $\alpha\not\in S$,
then $\beta\in\acc(\varphi(c\cup\{\alpha\}))$ for some closed $c\in[\alpha]^{<\cf(\lambda)}$.
Put $\delta:=\min((c\cup\{\alpha\})\bks\beta)$. Then  $\beta\in\acc(C_\delta')$,
and similarly to the preceding case, $\beta\not\in S$, and $C\cap\beta=\varphi((c\cap\beta)\cup\{\beta\})\in\mathcal C_\beta$.
\end{proof}

\begin{claim} For every cofinal $A\s\lambda^+$, there exists some $\alpha\in S$
such that $\otp(\varphi(C_\alpha))=\lambda$ and $\nacc(\varphi(C_\alpha))\s A$.
\end{claim}
\begin{proof}
Suppose that $A\s\lambda^+$ is as above. Let $\langle \lambda_i\mid i<\cf(\lambda)\rangle$
be a strictly increasing sequence of cardinals converging to $\lambda$.
For all $i<\cf(\lambda)$, put $$A_i:=\{\delta<\lambda^+\mid\otp(C_\delta)=\lambda_i\ \&\ \nacc(C_\delta)\s A\}.$$
Since $\overrightarrow C$ is a $\sc_\lambda$-sequence, $\langle A_i\mid i<\cf(\lambda)\rangle$
happens to be a sequence of unbounded subsets of $\lambda^+$.
Since $\overrightarrow C$ is a $\sc_\lambda$-sequence, we may find some $\alpha<\lambda^+$
such that $\otp(C_\alpha)=\cf(\lambda)$, and $C_\alpha(i+1)\in A_i$ for all $i<\cf(\lambda)$.

It follows that $\varphi(C_\alpha)$ is of order-type $\lambda$
and $\nacc(\varphi(C_\alpha))\s A$.
Finally, note that by $\otp(C_\alpha)=\cf(\lambda)$, we get that  $\alpha\in S$.
\end{proof}
By the previous claims, and recalling that $\mathcal C_\alpha=\{\varphi(C_\alpha)\}$ for all $\alpha\in S$,
we conclude that $\langle \mathcal C_\alpha\mid \alpha<\lambda^+\rangle$
witnesses $\sc^{\{\lambda\},1}_{\lambda,\lambda}$.
\end{proof}

\begin{prop}\label{43}
If $\sc_{\lambda,\lambda}^{\{\lambda\},\lambda}+\ch_\lambda$ holds for an uncountable cardinal $\lambda$,
then there exists an homogenous $\lambda^+$-Souslin tree. Moreover, there exists one which is $\kappa$-complete,
for $\kappa:=\min\{\theta\mid \lambda^\theta\not=\lambda\}$.
\end{prop}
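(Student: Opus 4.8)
The plan is to build the tree as a \emph{coherent tree of functions} into a fixed abelian group and to use the guessing clause~(4) of $\sc^{\{\lambda\},\lambda}_{\lambda,\lambda}$ to seal maximal antichains. Fix an abelian group $H$ with $|H|=\lambda$, and recall $\kappa=\min\{\theta\mid\lambda^\theta\neq\lambda\}$; one checks $\kappa$ is regular and $\lambda^{<\kappa}=\lambda$. The nodes will be functions $t\colon\alpha\to H$ with $\alpha<\lambda^+$, ordered by end-extension, and I will maintain throughout: (i) $|T_\alpha|\le\lambda$ on every level; (ii) downward closure plus \emph{normality} (every node extends to every higher level); (iii) \emph{coherence}, $|\{i<\alpha\mid s(i)\neq t(i)\}|<\kappa$ for all $s,t\in T_\alpha$; and (iv) \emph{closure under $<\kappa$-shifts}, i.e. $t+e\in T_\alpha$ whenever $t\in T_\alpha$ and $e\colon\alpha\to H$ has support of size $<\kappa$. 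Invariant~(iv) yields homogeneity: for $x,y\in T_\alpha$ the difference $e=y-x$ has support of size $<\kappa$ by~(iii), so extending it by $0$ gives a shift $u\mapsto u+e$ carrying $x^\uparrow$ isomorphically onto $y^\uparrow$; the same invariant, fed by coherence, also delivers normality for free.

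For the recursion: at successors set $T_{\alpha+1}=\{t^\frown\langle h\rangle\mid t\in T_\alpha,\ h\in H\}$, so every node splits and $|T_{\alpha+1}|\le\lambda$. At a limit $\alpha$ with $\cf(\alpha)<\kappa$ I take \emph{all} cofinal branches of $T\restriction\alpha$; a branch is determined by its restrictions along a cofinal sequence of order type $\cf(\alpha)<\kappa$, so there are at most $\lambda^{<\kappa}=\lambda$ of them, and by regularity of $\kappa$ they stay pairwise $<\kappa$-different, preserving (i) and (iii). This choice is exactly what makes the tree $\kappa$-complete, since the union of any chain of length $<\kappa$ is then a node. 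At a limit $\alpha$ with $\cf(\alpha)\ge\kappa$, where there are too many branches to keep, I use $\mathcal C_\alpha$: for each $C\in\mathcal C_\alpha$ (at most $\lambda$ many) I select a single canonical branch $b_C$ threaded along $C$, and let $T_\alpha$ be the $<\kappa$-shift closure of $\{b_C\mid C\in\mathcal C_\alpha\}$; coherence $C\cap\beta\in\mathcal C_\beta$ for $\beta\in\acc(C)$ forces $b_C\restriction\beta=b_{C\cap\beta}$, so the branches cohere across levels and $|T_\alpha|\le\lambda\cdot\lambda^{<\kappa}=\lambda$.

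The sealing is where $\ch_\lambda$ and clause~(4) combine. Using $2^\lambda=\lambda^+$, fix a bookkeeping assigning to each ordinal a task $Z_{\beta'}\s T\restriction\beta'$, so arranged that every subset of the eventual tree is caught cofinally often. Across a non-accumulation step from $\beta^-=\sup(C\cap\beta')$ to $\beta'\in\nacc(C)$ I define the values of $b_C$ on $[\beta^-,\beta')$ \emph{uniformly over all cosets}: for every $x\in T_{\beta^-}$ its canonical extension to level $\beta'$ is chosen to end-extend a member of $Z_{\beta'}$ lying above $x$, whenever $Z_{\beta'}$ is predense above $x$; performing this uniformly is what keeps the seal invariant under the shifts of~(iv). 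For Souslinity, let $Z$ be a maximal antichain; then $E=\{\beta\mid Z\cap(T\restriction\beta)\text{ is a maximal antichain in }T\restriction\beta\}$ is club, and $A=\{\beta'\in E\mid Z_{\beta'}=Z\cap(T\restriction\beta')\}$ is cofinal by the bookkeeping. Feeding the cofinal set $A$ and $\theta=\lambda$ into clause~(4) yields a level $\alpha$ (with $\cf(\alpha)=\cf(\lambda)\ge\kappa$, since $\lambda\mid\otp(C)$ and $\otp(C)\le\lambda$ force $\otp(C)=\lambda$) such that $\nacc(C)\s A$ for every $C\in\mathcal C_\alpha$. A node $t\in T_\alpha$ equals $b_C+e$ with support of $e$ bounded below $\alpha$ by some $\eta$; choosing $\beta'\in\nacc(C)$ with $\beta^->\eta$, the uniform sealing at step $\beta'$ forces $t\restriction\beta'$ to end-extend a member of $Z=Z_{\beta'}$ lying below $\alpha$. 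Hence every node on level $\alpha$ extends an element of $Z$, whence $Z\s T\restriction\alpha$ and $|Z|\le\lambda$. Finally no chain of size $\lambda^+$ exists, for a cofinal branch would, via the successor splitting, spawn a $\lambda^+$-sized antichain of side-successors, contradicting the antichain bound.

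I expect the principal obstacle to be the interaction between the homogeneity shifts of~(iv) and the sealing: a $<\kappa$-support shift can move a branch off the $Z$-element that sealed it, so the sealing must be carried out \emph{simultaneously for every coset} at each non-accumulation step, and one must verify that choosing $\beta'$ beyond the bounded support $\eta$ restores the seal and that $t\restriction\beta'$ is genuinely the sealed extension of $t\restriction\beta^-$. Checking that this uniform sealing is compatible with coherence~(iii), with shift-closure~(iv), and with the canonical choice of $b_C$ being respected under restriction is the delicate bookkeeping at the heart of the argument; it is precisely the need to keep all four invariants alive at once that forces the use of $\kappa=\min\{\theta\mid\lambda^\theta\neq\lambda\}$, through the identity $\lambda^{<\kappa}=\lambda$.
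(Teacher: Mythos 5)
You take a genuinely different route from the paper --- a coherent tree of functions into an abelian group, with homogeneity coming from $<\kappa$-support shifts --- but both of its load-bearing mechanisms fail, and the first failure sits exactly at the point you yourself flag as ``the principal obstacle''. The uniform sealing step does not exist. Concretely: if $t=b_C+e\in T_\alpha$ and the support of $e$ is contained in some $\eta<\beta^-$, where $\beta'\in\nacc(C)$ and $\beta^-=\sup(C\cap\beta')$, then $t\restriction\beta'=x\cup\tau$ with $x:=t\restriction\beta^-\in T_{\beta^-}$ and $\tau:=b_C\restriction[\beta^-,\beta')$; so one single tail $\tau$ must, for \emph{every} $x\in T_{\beta^-}$, make $x\cup\tau$ end-extend a member of $Z_{\beta'}$. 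But $x\cup\tau$ end-extends a $p\in Z_{\beta'}$ not already lying below $x$ only if $p\restriction\beta^-=x$ and $\tau$ end-extends $p\restriction[\beta^-,\dom(p))$, and for two prefixes $x_1\neq x_2$ these demands on $\tau$ are in general incompatible: nothing prevents every member of $Z_{\beta'}$ above $x_1$ from taking the value $0$ at coordinate $\beta^-$ while every member above $x_2$ takes the value $1$ there. Choosing different tails for different $x$ is not available either, since $T_{\beta'}$ is a previously built level and, in your shift-closed tree, every node of $T_\alpha$ carries the tail of some $b_C$. This is precisely why the paper, following Veli\v{c}kovi\'{c}, seals \emph{one} prefix per nonaccumulation step --- via the bookkeeping $f:\lambda\rightarrow\lambda\times\lambda$, the surjections $\psi_\beta$, and the sets $P^j_i$ in the proof of Proposition \ref{43} --- and why clause (4) is invoked with $\theta=\lambda$: a club of order type exactly $\lambda$ has enough nonaccumulation steps to serve all $\le\lambda$ prefixes in $\T\restriction\alpha$, one at a time.

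The second failure is the coherence invariant (iii) itself, on which both your homogeneity and your normality rest. The principle $\sc^{\{\lambda\},\lambda}_{\lambda,\lambda}$ allows $|\mathcal C_\alpha|=\lambda$, so at a limit $\alpha$ you must produce many canonical branches, one per $C\in\mathcal C_\alpha$; and for distinct $C,C'\in\mathcal C_\alpha$, the restrictions of $b_C$ and $b_{C'}$ to any $\beta\in\acc(C)\cap\acc(C')$ (a club in $\alpha$ when $\cf(\alpha)>\omega$) are already frozen at level $\beta$, being $b_{C\cap\beta}$ and $b_{C'\cap\beta}$. Hence the difference set of $b_C$ and $b_{C'}$ is an increasing union, along a sequence cofinal in $\alpha$, of sets each of size $<\kappa$; when $\cf(\alpha)=\kappa$ such a union can have size exactly $\kappa$, and no choice made at stage $\alpha$ can repair this, since all relevant values were fixed before $\alpha$, $C$, $C'$ were known --- indeed your sealing constraints actively push distinct canonical branches apart. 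Note that this is not a marginal case: the levels your Souslin argument uses have $\cf(\alpha)=\cf(\lambda)$, and in the intended application (Theorem C assumes $\lambda^{<\cf(\lambda)}=\lambda$) one has $\kappa=\cf(\lambda)$, so the sealed levels are exactly the problematic ones. The paper sidesteps coherence entirely: each level is closed under the mixing $s*t^j_\alpha$ for \emph{arbitrary} $s\in\T\restriction\alpha$, which yields $\T^{s_0}=\T^{s_1}$, i.e.\ plain homogeneity, with no coherence requirement at all. Your shift-coherent scheme is really aimed at \emph{strong} homogeneity, which is the content of Theorem \ref{t41} and needs the stronger hypothesis $\sc^{\{\lambda\},1}_{\lambda,1}$ (a single club per level) --- exactly what Proposition \ref{43} is designed to do without. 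A last, repairable point: catching every subset of the eventual tree ``cofinally often'' is not an automatic bookkeeping consequence of $2^\lambda=\lambda^+$; one needs $\diamondsuit(\lambda^+)$, which for uncountable $\lambda$ does follow from $\ch_\lambda$ by Shelah's theorem \cite{sh922}, and this is how the paper sets up its guessing.
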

\begin{proof}
We follow closely Veli\v{c}kovi\'{c} construction from \cite{velickovic}.

The resulting tree $\T$ would be a subtree of $\langle {}^{<\lambda^+}2,\s\rangle$,
and the $\alpha_{th}$ level of the tree, which we shall denote by $T_\alpha$, will be a subset of ${}^\alpha2$.
We also denote $\T\restriction\alpha:=\bigcup_{\beta<\alpha}T_\beta$.
For sequences $s,t\in{}^{<\lambda^+}2$ with $|s|<|t|$,
let $s*t:=s\cup (t\bks(\dom(s)\times 2))$  denote the sequence that begins as $s$ and continues as $t$.
Also, let $t^\frown i:=t\cup\{(\dom(t),i)\}$ denote the concatenation of the sequence $t$ with the sequence $\langle i\rangle$.

Next, let us fix the following objects:
\begin{itemize}
\item  a  $\sc_{\lambda,\lambda}^{\{\lambda\},\lambda}$-sequence,
$\langle \mathcal C_\alpha\mid \alpha<\lambda^+\rangle$, that exists by hypothesis;
\item an enumeration $\{C_\alpha^j\mid j<\lambda\}$ of $\mathcal C_\alpha$, for each limit $\alpha<\lambda^+$;
\item a $\diamondsuit({\lambda^+})$-sequence, $\langle S_\gamma\mid\gamma<\lambda^+\rangle$, that exists by $\ch_\lambda$ and \cite{sh922};
\item a bijection $\varphi:{}^{<\lambda^+}2\leftrightarrow\lambda^+$, that exists by $\ch_\lambda$;
\item a surjection $f:\lambda\rightarrow\lambda\times\lambda$ such that if $f(i)=\langle k,\nu\rangle$,
then $k \le i$;
\item a well ordering $\preceq$ of ${}^{<\lambda^+}2$.
\end{itemize}

We now turn to the construction. For every $\alpha<\lambda^+$, we shall construct the following objects:
\begin{itemize}
\item $T_\alpha\s{}^{\alpha}2$;
\item a surjection $\psi_\alpha:\lambda\rightarrow\T\restriction\alpha+1$;
\item a collection $\{t_\alpha^j\mid j<\lambda\}\s T_\alpha$ for all nonzero limit $\alpha<\lambda^+$.
\end{itemize}

\underline{Construction Base:}
 Let $T_0=\{\emptyset\}$.
Let $\psi_\alpha:\lambda\rightarrow T_0$ be the constant function.

\underline{Successor Stage:}  If $T_\alpha$ is defined,
then $T_{\alpha+1}:=\{ \sigma^\frown0,\sigma^\frown1\mid \sigma\in T_\alpha\}$
and $\psi_\alpha:\lambda\rightarrow T\restriction\alpha+1$ would be some arbitrary, fixed  surjection.

\underline{Limit Stage:}
Suppose that $\alpha$ is a limit ordinal, and that $T_\beta,\psi_\beta$ have already been defined for all $\beta<\alpha$.
We consider two cases.

$\blacktriangleright$ (Case $\aleph$) Suppose that $\otp(C_\alpha^j)<\lambda$ for all $j<\lambda$,
and that the number of cofinal branches in $(\T\restriction\alpha)$ is $\le\lambda$.
In this case, let $\{ t^j_\alpha\mid j<\lambda\}$ be some fixed enumeration of all these branches.
Then, put $T_\alpha:=\{t^j_\alpha\mid j<\lambda\}$, and note that
$$T_\alpha=\{ s*t^j_\alpha\mid s\in(\T\restriction\alpha), j<\lambda\}.$$

$\blacktriangleright$ (Case $\beth$) Suppose that the above does not apply.
Fix $j<\lambda$. Let $\{ \alpha^j_i\mid i<\otp(C_\alpha^j)\}$ be the increasing enumeration of $C^j_\alpha$.

We shall define the sequence $t^j_\alpha:\alpha\rightarrow 2$, by recursion on $i<\otp(C^j_\alpha)$,
as the limit of an increasing chain $\{t^j_{\alpha,i}:\alpha^j_i\rightarrow 2\mid i<\otp(C^j_\alpha)\}$, as follows:
\begin{itemize}
\item
For $i=0$, let $t^j_{\alpha,0}$
be the $\preceq$-least element of $T_{\alpha^j_0}$;

\item Suppose that $t^j_{\alpha,i}:\alpha^j_i\rightarrow 2$ has been defined for some $i<\otp(C^j_\alpha)$,
and let us define $t^j_{\alpha,i+1}:\alpha^j_{i+1}\rightarrow 2$.

 Put $\langle k,\nu\rangle:=f(i)$, and $s:=\psi_{\alpha^j_k}(\nu)$. Consider the following set:
 $$P^j_i:=\left\{ p\in \T\restriction\alpha^j_{i+1}\mid s* t^j_{\alpha,i}\s p,\  \varphi(p)\in S_{\alpha^j_{i+1}}\right\}.$$
If $P^j_i\not=\emptyset$, then let $t^j_{\alpha,i+1}$ be the $\preceq$-least element of $T_{\alpha^{j}_{i+1}}$
such that $s*t^j_{\alpha,i+1}$ is above some $p\in P^j_i$. Otherwise, let $t^j_{\alpha,i+1}$ be the $\preceq$-least element of $T_{\alpha^{j}_{i+1}}$
which is above $t^j_{\alpha,i}$.

\item Suppose that $i<\otp(C^j_\alpha)$ is a limit ordinal, and that $t^j_{\alpha,k}:\alpha^j_k\rightarrow 2$ has been defined for all $k<i$.
Then let $t^j_{\alpha,i}:=\bigcup_{k<i}t^j_{\alpha,k}$.

\end{itemize}
This completes the construction of $t^j_\alpha:=\bigcup\{t^j_{\alpha,i}\mid i<\otp(C^j_\alpha)\}$.
Once that $ t^j_\alpha$ is defined for each $j<\lambda$, we let $$T_\alpha:=\{ s*t^j_\alpha\mid s\in(\T\restriction\alpha), j<\lambda\}.$$

In either case, $|T_\alpha|\le\lambda$. Thus, fix an arbitrary surjection $\psi_\alpha:\lambda\rightarrow T\restriction\alpha+1$.

This completes the construction of the tree.

\begin{claim}\label{441} For every ordinal $\alpha<\lambda^+$:
\begin{itemize}
\item if $t\in T_\alpha$ and $s\in\T\restriction\alpha$, then $s*t\in T_\alpha$;
\item $t_\alpha^j\restriction\beta\in T_\beta$ for every $j<\lambda$ and $\beta<\alpha=\sup(\alpha)$;
\end{itemize}
In particular, $T_\alpha$ is well-defined for every $\alpha<\lambda^+$.
\end{claim}
\begin{proof} Suppose not, and let $\alpha<\lambda^+$ be the minimal counter-example to the failure of at least one of the two items.
Clearly, $\alpha$ is a nonzero limit  ordinal.

If $t_\alpha^j\restriction\beta\in T_\beta$ for all $j<\lambda$, and $\beta<\alpha$,
then the minimality of $\alpha$ and the definition of $T_\alpha$ insure that $s*t\in T_\alpha$ for all $t\in T_\alpha$ and $s\in\T\restriction\alpha$.
Thus, fix some $j<\lambda$ such that $t_\alpha^j\restriction\beta\not\in T_\beta$ for some $\beta$.
Let $i<\otp(C^j_\alpha)$ be the least such that $t^j_\alpha\restriction\alpha^j_i\not\in T_{\alpha^j_i}$.
Then $i$ is a limit ordinal, and it must be the case that $T_\alpha$ and $T_{\alpha^j_i}$
have been defined according to case $\beth$.
In this case, $t^j_\alpha\restriction\alpha^j_i$ is completely determined
by $(\T\restriction \alpha_i^j)$, $(C^j_\alpha\cap \alpha_i^j)$, and $\langle S_\gamma\mid \gamma<\alpha_i^j\rangle$.
Let $j'$ be such that $C^j_\alpha\cap\alpha_i^j=C^{j'}_{\alpha_i^j}$.
Then the construction of $t^{j'}_{\alpha_i^j}$ is determined by the above-mentioned three objects,
in a way that implies $t^j_\alpha\restriction\alpha^j_i=t^{j'}_{\alpha_i^j}$.

It follows that $t^{j'}_{\alpha_i^j}\not\in T_{\alpha^j_i}$, contradicting the fact that $\alpha_i^j<\alpha$,
while $\alpha$ was chosen as the minimal level at which a counter-example exists.
\end{proof}

So $\T$ is a tree of height $\lambda^+$, and width $\lambda$. We now continue with its analysis.
\begin{claim} $\T$ is $\kappa$-complete.
\end{claim}
\begin{proof} Suppose that $\langle s_i\mid i<\theta\rangle$ is a strictly increasing sequence of elements of $T$,
with $\theta<\kappa$. Put $s:=\bigcup_{i<\theta}s_i$, and $\alpha:=\dom(s)$. Then $\cf(\alpha)<\kappa\le\cf(\lambda)$.
So $\otp(C^j_\alpha)<\lambda$ for all $j<\lambda$, and $|T_\alpha|^{\cf(\alpha)}\le\lambda^{\theta}=\lambda$,
which implies that $T_\alpha$ has been defined according to case $\aleph$. In particular, $s\in T_\alpha$.
\end{proof}

For $s\in\T$, denote $\T^s:=\{ t\in{}^{<\lambda^+}2\mid s^\frown t\in \T\}$.
\begin{claim}   $\T^{s_0}=\T^{s_1}$ for all $s_0,s_1\in\T$ of the same height.

In particular, $\T$ is homogenous.
\end{claim}
\begin{proof} This is an immediate consequence of Claim \ref{441}.
\end{proof}

Thus, we are left with establishing the following.

\begin{claim} $\T$ is a $\lambda^+$-Souslin tree.
\end{claim}
\begin{proof} Towards a contradiction, suppose that $A\s{}^{<\lambda^+}2$ is a maximal antichain of size $\lambda^+$.
Put
\begin{itemize}
\item $D:=\{ \gamma<\lambda^+\mid \T\cap \varphi^{-1}[\gamma]=\T\restriction\gamma\}$;
\item $E:=\{\gamma<\lambda^+\mid A\cap\gamma\text{ is a maximal antichain in }\T\restriction\gamma\}$.
\end{itemize}
Then $D\cap E$ is a club.
Put $$S:=\{\gamma\in D\cap E\mid \varphi[A]\cap\gamma=S_\gamma\}.$$
Then $S$ is a stationary set, and we may fix some $\alpha<\lambda^+$
such that $\otp(C^j_\alpha)=\lambda$ and $\nacc(C^j_\alpha)\s S$ for all $j<\lambda$.

Since $|A|=\lambda^+$, let $q$ be some element of $A$ with $\dom(q)>\alpha$.
Consider $q\restriction\alpha$. Then $q\restriction\alpha\in T_\alpha$,
and hence $q\restriction\alpha=s*t^j_\alpha$ for some $s\in\T\restriction \alpha$ and $j<\lambda$,
which we now fix.
Let $\{ \alpha^j_i\mid i<\lambda\}$ denote the increasing enumeration of $C^j_\alpha$,
and let $k$ be large enough so that $s\in T\restriction \alpha^j_k$. Fix $\nu<\lambda$
such that $\psi_{\alpha^j_k}(\nu)=s$, and $i<\lambda$ such that $f(i)=(k,\nu)$.
Since $\alpha^j_{i+1}\in\nacc(C^j_\alpha)\s S$, and $s*t^j_{\alpha,i}\in T\restriction\alpha^j_{i+1}$,
there exists some
$p\in \T\restriction\alpha^j_{i+1}$ with $\varphi(p)\in S_\gamma$
such that either $p\s s*t^j_{\alpha,i}$,
or $s*t^j_{\alpha,i}\s p$. In either case, we get that $s* t^j_{\alpha,i+1}$ is above some element of $A$.
In other words, for some $p\in A$, we have:
$$p\s s*t^j_{\alpha,i+1}\s s*t^j_\alpha=q\restriction\alpha\subset q,$$
contradicting the fact that $p,q$ are distinct elements of the antichain $A$.
\end{proof}
\end{proof}

To conclude this section, we point out that $\sc_{\lambda,\lambda}^{\{\lambda\},\lambda}$ and $\sc_{\lambda,\lambda}^{\{\lambda\},1}$
are equivalent for every cardinal $\lambda$, and that
$\sc_{\lambda,\lambda}^{\{\lambda\},1}$ and $\clubsuit(E^{\lambda^+}_\lambda)$ are equivalent,
provided that $\lambda^{<\lambda}=\lambda$.
As a corollary, we derive the following folklore fact concerning successors of regulars.
\begin{cor} If $\lambda^{<\lambda}=\lambda$ is an uncountable regular cardinal and $\diamondsuit({E^{\lambda^+}_\lambda})$ holds,
then there exists an homogenous $\lambda$-complete, $\lambda^+$-Souslin tree.
\end{cor}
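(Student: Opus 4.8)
The plan is to derive the hypotheses of Proposition \ref{43} from the given data and then invoke it directly, so that essentially all the work has already been carried out in the equivalences recorded at the end of this section. First I would observe that $\diamondsuit(E^{\lambda^+}_\lambda)$ already secures $\ch_\lambda$: given a $\diamondsuit(E^{\lambda^+}_\lambda)$-sequence $\langle A_\delta\mid\delta\in E^{\lambda^+}_\lambda\rangle$, the assignment $\delta\mapsto A_\delta\cap\lambda$ maps its domain onto $\mathcal P(\lambda)$, since for every $A\s\lambda$ there is some $\delta>\lambda$ in the guessing set with $A_\delta=A\cap\delta$, and then $A_\delta\cap\lambda=A$. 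As the domain has size $\lambda^+$, this yields $2^\lambda\le\lambda^+$, that is, $\ch_\lambda$; in particular no separate cardinal-arithmetic assumption is needed to apply Proposition \ref{43}.

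Next I would pass from diamond to club. Since $\diamondsuit(E^{\lambda^+}_\lambda)$ trivially implies $\clubsuit(E^{\lambda^+}_\lambda)$, and since $\lambda^{<\lambda}=\lambda$ holds by hypothesis, the equivalence noted just above (valid under $\lambda^{<\lambda}=\lambda$) delivers $\sc_{\lambda,\lambda}^{\{\lambda\},1}$. By the first of the two equivalences recorded there, which holds for every cardinal, $\sc_{\lambda,\lambda}^{\{\lambda\},1}$ in turn yields $\sc_{\lambda,\lambda}^{\{\lambda\},\lambda}$. Combined with the $\ch_\lambda$ established in the previous step, both hypotheses of Proposition \ref{43} are now in place.

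Finally I would identify the completeness parameter $\kappa:=\min\{\theta\mid\lambda^\theta\neq\lambda\}$ furnished by Proposition \ref{43}. From $\lambda^{<\lambda}=\lambda$ one gets $\lambda^\theta=\lambda$ for every $\theta<\lambda$, while $\lambda^\lambda\ge 2^\lambda=\lambda^+>\lambda$; hence $\kappa=\lambda$. Proposition \ref{43} then produces an homogenous $\lambda^+$-Souslin tree that is $\lambda$-complete, exactly as claimed.

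The argument presents no genuine obstacle, being a bookkeeping assembly of the equivalences proved above. The only two points requiring care are confirming that the diamond hypothesis by itself already secures $\ch_\lambda$ (so that Proposition \ref{43} applies cleanly), and verifying that the optimal completeness degree $\kappa$ coincides with $\lambda$ under $\lambda^{<\lambda}=\lambda$; both are short cardinal-arithmetic checks.
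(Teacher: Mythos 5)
Your proposal is correct and follows exactly the route the paper intends: the corollary is derived from the equivalences stated just before it ($\clubsuit(E^{\lambda^+}_\lambda)\Leftrightarrow\sc_{\lambda,\lambda}^{\{\lambda\},1}$ under $\lambda^{<\lambda}=\lambda$, and $\sc_{\lambda,\lambda}^{\{\lambda\},1}\Leftrightarrow\sc_{\lambda,\lambda}^{\{\lambda\},\lambda}$) together with Proposition \ref{43}, and your bookkeeping details (that $\diamondsuit(E^{\lambda^+}_\lambda)$ yields $\ch_\lambda$, and that $\kappa=\min\{\theta\mid\lambda^\theta\not=\lambda\}$ equals $\lambda$ under the hypotheses) are accurate.
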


\npg\section{Connecting the dots}\label{sec5}
\begin{THMA} Suppose that $\ch_\lambda$ holds for a given uncountable cardinal $\lambda$.

Then all of the following are equivalent:
\begin{itemize}
\item $\square_\lambda$;
\item $\sc_\lambda(S)$, for every stationary  $S\s E^{\lambda^+}_{\not=\cf(\lambda)}$;
\item $\sc_\lambda(S)$, for every $S\s \lambda^+$ that reflects stationarily often.
\end{itemize}
\end{THMA}
\begin{proof}It is obvious that either of the $\sc_\lambda$ principles implies
$\square_\lambda$, so let us focus on the other implications.
Suppose that $\square_\lambda+\ch_\lambda$ holds.

$\blacktriangleright$ By Theorem \ref{0034},
we get that $\sd'_\lambda(S)$ holds for every stationary  $S\s E^{\lambda^+}_{>\omega}\cap E^{\lambda^+}_{\not=\cf(\lambda)}$,
and every $S\s \lambda^+$ that reflects stationarily often.
It now follows from Lemma \ref{18}, that
$\sc_\lambda(S)$ holds for every stationary  $S\s E^{\lambda^+}_{>\omega}\cap E^{\lambda^+}_{\not=\cf(\lambda)}$,
and every $S\s \lambda^+$ that reflects stationarily often.

$\blacktriangleright$ Suppose that $S\s E^{\lambda^+}_\omega$ is a given stationary set, while $\cf(\lambda)>\omega$.
Pick a $\square_\lambda$-sequence $\langle C_\alpha\mid\alpha<\lambda^+\rangle$.
By Fodor's lemma, pick a limit $\theta<\lambda$ such that $\{ \alpha\in S\mid \otp(C_\alpha)=\theta\}$ is stationary,
and denote the latter by $S'$.
By $S'\s E^{\lambda^+}_{\not=\cf(\lambda)}$,
$\ch_\lambda$ and \cite{sh922}, we get that $\diamondsuit({S'})$ holds, hence, it easy to find for every $\alpha\in S'$,
a set $A_\alpha$ of order-type $\omega$, for which $\{ \alpha\in S'\mid A_\alpha\s A\}$
is stationary for every cofinal $A\s\lambda^+$.

Finally, for all limit $\alpha<\lambda^+$, define:
$$C_\alpha':=\begin{cases}A_\alpha,&\alpha\in S'\\
C_\alpha,&\alpha\not\in S', \otp(C_\alpha)\le\theta\\
\{\beta\in C_\alpha\mid \otp(C_\alpha\cap\beta)>\theta\},&\alpha\not\in S', \otp(C_\alpha)>\theta
\end{cases}.$$
Then $\langle C_\alpha'\mid \alpha<\lambda^+\rangle$ witnesses $\sc_\lambda(S)$.
\end{proof}

\begin{THMB}  $\square_\lambda$ implies $\sc_\lambda$, provided that:
\begin{itemize}
\item $\lambda$ is a limit uncountable cardinal, and $\lambda^\lambda=\lambda^+$;
\item $\lambda$ is a successor cardinal, and $\lambda^{<\lambda}<\lambda^\lambda=\lambda^+$.
\end{itemize}
\end{THMB}
\begin{proof}
If $\lambda$ is a successor cardinal, then simply appeal to Theorem \ref{301}.
Now, suppose that $\lambda$ is a limit uncountable cardinal.
By Theorem \ref{61}, we get that $\sd_\lambda^{\Gamma}$ holds for some cofinal subset $\Gamma\s\reg(\lambda)$.
Since $\lambda$ is a limit cardinal, this means that $\sup(\Gamma)=\lambda$.
Then, by Lemma \ref{25}, $\sc_\lambda$ is valid.
\end{proof}

\begin{THMC} Suppose that $\square_\lambda$ holds for a given singular cardinal $\lambda$.

If $\lambda^{<\cf(\lambda)}<\lambda^\lambda=\lambda^+$,
then there exists an homogenous $\lambda^+$-Souslin tree,
which is moreover $\cf(\lambda)$-complete.
\end{THMC}
\begin{proof} Since $\lambda$ is a limit cardinal, we get from Theorem B, that $\sc_\lambda$ holds. It now follows from $\lambda^{<\cf(\lambda)}=\lambda$
and Lemma \ref{lemma42}, that $\sc_{\lambda,\lambda}^{\{\lambda\},1}$ is valid.
So $\cf(\lambda)=\min\{\theta\mid \lambda^\theta\not=\lambda\}$ and $\ch_\lambda+\sc_{\lambda,\lambda}^{\{\lambda\},\lambda}$ holds,
meaning that we are in a position to invoke Proposition \ref{43}.
\end{proof}

\begin{THMD} For an uncountable cardinal $\lambda$, the following are equivalent:
\begin{itemize}
\item $\square_\lambda+\ch_\lambda$;
\item $\sd^{\Gamma}_{\lambda}$, for $\Gamma=\reg(\lambda)$;
\item $\sd_\lambda(S)$, for every stationary  $S\s E^{\lambda^+}_{>\omega}\cap E^{\lambda^+}_{\not=\cf(\lambda)}$;
\item $\sd_\lambda(S)$, for every $S\s \lambda^+$ that reflects stationarily often.
\end{itemize}
\end{THMD}
\begin{proof}It is obvious that either of the $\sd_\lambda$ principles implies
$\square_\lambda$ and $\ch_\lambda$, thus let us focus on the other implications. Suppose that $\square_\lambda+\ch_\lambda$ holds.

$\blacktriangleright$ By Theorem \ref{0034} and Lemma \ref{18},
we get that $\sd_\lambda(S)$ holds for every stationary  $S\s E^{\lambda^+}_{>\omega}\cap E^{\lambda^+}_{\not=\cf(\lambda)}$,
and every $S\s \lambda^+$ that reflects stationarily often.

$\blacktriangleright$ If $\lambda$ is the successor of some regular cardinal $\kappa$, then $E^{\lambda^+}_\kappa$ reflects
 stationarily often and hence $\sd_\lambda(E^{\lambda^+}_\kappa)$ holds.
If $\lambda$ is a limit uncountable cardinal, then by Theorem \ref{61}, we get that $\sd^{\Gamma}_{\lambda}$ holds for some cofinal $\Gamma\s\reg(\lambda)$.
Thus, in any case, we may find some $\Gamma\s\lambda^+$ with $\Gamma\bks\theta\not=\emptyset$ for all $\theta<\lambda$,
for which $\sd_\lambda^\Gamma$ holds.
Pick a witness $\langle (C_\alpha,S_\alpha)\mid \alpha<\lambda^+\rangle$,
and let us show that the very same sequence witnesses $\sd_\lambda^{\reg(\lambda)}$.
Suppose that $D\s\lambda^+$ is a club,  $A$ is some cofinal subset of $\lambda^+$,
and $\theta\in\reg(\lambda)$. As $\Gamma\bks\theta\not=\emptyset$,
we may pick some $\alpha\in E^{\lambda^+}_{\ge\theta}$ for which $C_\alpha\s D$, $S_\alpha=A\cap\alpha$ and $\sup(\acc(C_\alpha))=\alpha$.
By $\cf(\sup(\acc(C_\alpha)))\ge\theta$, there exists some $\beta\in\acc(C_\alpha)\cup\{\alpha\}$ with $\cf(\acc(C_\alpha)\cap\beta)=\theta$.
For such a $\beta$, we have $C_\beta\s C_\alpha\s D$, $S_\beta=S_\alpha\cap\beta=A_\alpha\cap\beta$, and $\sup(\acc(C_\beta))=\beta$.
\end{proof}

\npg\section{Remarks and Open Problems}\label{sec6}
\subsection*{Remarks}

The technology of this paper also yields the following results.

\begin{prop} If $\square_\lambda+\ch_\lambda$ holds for a given singular cardinal,
then there exists a sequence $\langle S_\alpha\mid \alpha<2^{\lambda^+}\rangle$ such that:
\begin{enumerate}
\item $|S_\alpha\cap S_\beta|<\lambda^+$ whenever $\alpha<\beta<2^{\lambda^+}$;
\item for all $\alpha<2^{\lambda^+}$, there exists some $(<\lambda^+)$-distributive forcing extension
in which $S_\alpha$ contains a club subset of $\lambda^+$.
\end{enumerate}
\end{prop}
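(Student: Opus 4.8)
The plan is to reduce to square-with-built-in-diamond and then marry a diamond-driven almost-disjoint coding to a square-driven distributive club-shooting. Since $\lambda$ is singular, $\ch_\lambda$ is equivalent to $\lambda^\lambda=\lambda^+$, so Theorem D (that is, Theorems \ref{61} and \ref{0034}) furnishes a $\sd_\lambda^{\Gamma}$-sequence $\langle(C_\alpha,Z_\alpha)\mid\alpha<\lambda^+\rangle$ with $\Gamma=\reg(\lambda)$. Fix an injective enumeration $\langle X_\eta\mid\eta<2^{\lambda^+}\rangle$ of $\mathcal P(\lambda^+)$, and for each $\eta$ set
$$S_\eta:=\{\alpha<\lambda^+\mid \sup(\acc(C_\alpha))=\alpha\ \&\ Z_\alpha=X_\eta\cap\alpha\}.$$
After re-indexing, $\langle S_\eta\mid\eta<2^{\lambda^+}\rangle$ will be the required family, and it remains to verify the two clauses.

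Clause (1) is forced by the coding. If $\eta\not=\eta'$, fix $\gamma_0<\lambda^+$ at which $X_\eta$ and $X_{\eta'}$ disagree. Any $\alpha\in S_\eta\cap S_{\eta'}$ with $\alpha>\gamma_0$ would satisfy $X_\eta\cap\alpha=Z_\alpha=X_{\eta'}\cap\alpha$, which is absurd; hence $S_\eta\cap S_{\eta'}\s\gamma_0+1$, so that $|S_\eta\cap S_{\eta'}|\le|\gamma_0|<\lambda^+$. Moreover, the guessing clause of $\sd_\lambda^{\reg(\lambda)}$ shows each $S_\eta$ is stationary: given a club $D$ there is $\alpha$ with $C_\alpha\s D$, $Z_\alpha=X_\eta\cap\alpha$ and $\sup(\acc(C_\alpha))=\alpha$, so $\alpha\in S_\eta\cap\acc(D)$. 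In particular the $S_\eta$ are pairwise distinct, and the family genuinely has size $2^{\lambda^+}$.

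For clause (2), the coherence of the sequence equips each $S_\eta$ with a square scaffold concentrating on it: for $\alpha\in S_\eta$ put $e_\alpha:=\acc(\acc(C_\alpha))$. Then $e_\alpha$ is a club in $\alpha$ with $e_\alpha\s S_\eta$ — each $\beta\in e_\alpha$ lies in $\acc(C_\alpha)$, so $C_\beta=C_\alpha\cap\beta$, $\sup(\acc(C_\beta))=\beta$ and $Z_\beta=Z_\alpha\cap\beta=X_\eta\cap\beta$ — and these clubs cohere, i.e.\ $\beta\in\acc(e_\alpha)$ implies $e_\beta=e_\alpha\cap\beta$. Thus $\langle e_\alpha\mid\alpha\in S_\eta\rangle$ is a coherent, $S_\eta$-concentrating sequence of clubs. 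Let $\mathbb P_\eta$ be the poset shooting a club through $S_\eta$ (closed bounded $c\s S_\eta$, ordered by end-extension). Since conditions are closed and $S_\eta$ is cofinal, the generic object is automatically a club of $\lambda^+$ contained in $S_\eta$; so everything reduces to showing that $\mathbb P_\eta$ is $(<\lambda^+)$-distributive, and granting this, $V^{\mathbb P_\eta}$ is the desired extension.

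The distributivity is the heart of the matter and the step I expect to require the most care. To meet fewer than $\lambda^+$ many dense open sets below a condition, I would pass to an elementary submodel $M\prec H_{\lambda^{++}}$ with $\delta:=M\cap\lambda^+\in S_\eta$ and build a descending chain of conditions whose successive tops march along $e_\delta$, so that every limit top is an accumulation point of $e_\delta$ and thus falls back into $S_\eta$. For a chain of any length $\theta<\lambda$ this is immediate: since $\Gamma=\reg(\lambda)$ is cofinal in $\lambda$, the guessing supplies $\delta\in S_\eta$ of regular cofinality above $\theta$, and then $\otp(e_\delta)\ge\cf(\delta)>\theta$ leaves ample room. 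The genuine obstacle is the single cofinality $\cf(\lambda)$: absorbing a chain of length $\lambda$ demands a \emph{tall} point $\delta\in S_\eta$, one with $\otp(C_\delta)=\lambda$, so that $\otp(e_\delta)=\lambda$. Such points need not be produced by the bare guessing, since the guessed components in the proof of Theorem \ref{61} have order-type below $\lambda$; so the real work is to arrange stationarily many tall points inside $S_\eta$. I would do this by stretching the scaffold exactly as in the operation $\varphi$ of Lemma \ref{lemma42}: stitching, at a point of order-type $\cf(\lambda)$, components of order-types cofinal in $\lambda$ yields coherent clubs of order-type $\lambda$ landing in $S_\eta$, after which the standard square-threading distributivity argument of \cite{sh221} applies at cofinality $\cf(\lambda)$ as well, completing the proof that $\mathbb P_\eta$ is $(<\lambda^+)$-distributive.
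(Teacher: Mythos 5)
Your setup --- the $\sd_\lambda^{\reg(\lambda)}$-sequence supplied by Theorem D, the diamond-coded sets $S_\eta$, the verification of clause (1), the reduction of clause (2) to $(<\lambda^+)$-distributivity of the end-extension club-shooting poset, and the coherent scaffold $e_\alpha=\acc(\acc(C_\alpha))\subseteq S_\eta$ (a club in $\alpha$ only when $\cf(\alpha)>\omega$, which is harmless) --- is correct, and it is the natural way to deploy the paper's technology, in the spirit of \cite{sh221}. Both gaps lie inside the distributivity argument, and they sit exactly where you placed the emphasis.

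First, what you call ``the genuine obstacle'' is not one. $(<\lambda^+)$-distributivity only ever asks for meeting $\le\lambda$ many dense open sets, and for \emph{singular} $\lambda$ this follows for free from $\mu$-distributivity for all $\mu<\lambda$: fix regulars $\langle\theta_k\mid k<\cf(\lambda)\rangle$ cofinal in $\lambda$; each $E_k:=\bigcap_{i<\theta_k}D_i$ is dense open by $\theta_k$-distributivity, and then $\bigcap_{i<\lambda}D_i=\bigcap_{k<\cf(\lambda)}E_k$ is dense by $\cf(\lambda)$-distributivity, since $\cf(\lambda)<\lambda$. So no chain of length $\lambda$, hence no ``tall'' point with $\otp(C_\delta)=\lambda$, is ever needed, and the stitching project to which you defer ``the real work'' is superfluous. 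It is also shaky on its own terms: Lemma \ref{lemma42} is proved under $\lambda^{<\cf(\lambda)}=\lambda$, which does not follow from $\square_\lambda+\ch_\lambda$ when $\cf(\lambda)>\omega$ (one may have $\lambda^{\aleph_0}=\lambda^+$), and the stitching would have to be performed on the very sequence that carries the coherent sets $Z_\alpha$, whereas the $\sc_\lambda$-sequence of Theorem B is a different object; none of this compatibility is checked.

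Second, the case you call ``immediate'' --- $\mu<\lambda$ dense sets --- is where the entire content lies, and your single-submodel version does not close. The trouble is at limit stages of the march: the union of the conditions built so far is in general \emph{not} an element of $M$, and it cannot be reconstructed inside $M$, because $e_\delta$ is defined from $C_\delta$ while $\delta\notin M$; nor can $M$ be taken closed under $<\mu$-sequences, since $\lambda^{<\mu}$ may exceed $\lambda$. Hence elementarity of $M$ yields no extension of that union into the next dense set, let alone one bounded below the next guide point. This is precisely where coherence must be exploited; stationarity of $S_\eta$ plus the mere existence of the clubs $e_\delta$ would not suffice. The repair: take an increasing continuous chain $\langle N_\gamma\mid\gamma<\lambda^+\rangle$ of $\lambda$-sized elementary submodels with $\langle N_{\gamma'}\mid\gamma'\le\gamma\rangle\in N_{\gamma+1}$, let $C_0$ be the club of $\gamma$ with $N_\gamma\cap\lambda^+=\gamma$, and apply the guessing clause to get $\delta\in S_\eta$ with $\cf(\delta)$ regular, uncountable and above $\mu$, and with $C_\delta\subseteq\acc(C_0)$. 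Then every guide point $\beta\in e_\delta$ is a trace of a model containing the dense sets, and --- the key use of coherence --- $e_\delta\cap\beta=\acc(\acc(C_\beta))$ is computable inside $N_{\beta+1}$ from $C_\beta$ and the model chain below $\beta$, so the partial construction, being canonically defined from these parameters, is an element of $N_{\beta+1}$, and the recursion passes through limit stages. With this mechanism in place for every regular $\mu<\lambda$, and the decomposition of the previous paragraph on top of it, the proposition follows.
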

\begin{note}
The existence of the above sort of sequence for $\lambda=\omega$ is a well-known consequence of $\diamondsuit(\omega_1)$.
\end{note}
\begin{prop} If $\square_\lambda$ holds for a given uncountable cardinal $\lambda$,
then this may be witnessed by a coherent sequence $\langle C_\alpha\mid\alpha<\lambda^+\rangle$
with the following additional feature:
\begin{itemize}
\item[(3)] for every club $D\s\lambda^+$ and every limit $\theta<\lambda$,
there exists some $\alpha<\lambda^+$ such that $C_\alpha\s D$, and $\otp(C_\alpha)=\theta$.
\end{itemize}
\end{prop}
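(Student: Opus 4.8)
The plan is to extract the \emph{club-guessing} content of the proofs of Theorem B (i.e.\ of Theorems \ref{61} and \ref{301}), discarding the diamond-theoretic layer that forced the arithmetic hypotheses. Fix any $\square_\lambda$-sequence $\langle C_\alpha\mid\alpha<\lambda^+\rangle$. First I would trim it, exactly as at the opening of the proofs of Theorems \ref{61} and \ref{301} (via a coherent family $\langle E_\epsilon\mid \epsilon\le\lambda\text{ limit}\rangle$ and $C'_\alpha:=\pi_{C_\alpha}``E_{\otp(C_\alpha)}$), so as to obtain a coherent $\square_\lambda$-sequence with $\otp(C'_\alpha)<\lambda$ for every $\alpha\in E^{\lambda^+}_{<\lambda}$. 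Note that none of the cardinal-arithmetic assumptions of Theorem B are needed for this, nor for any of the subsequent club-guessing steps: $\ch_\lambda$ and the Engelking--Kar{\l}owicz apparatus enter those proofs only in their later, diamond-theoretic portions (from Claim \ref{436} on in Theorem \ref{61}, and the Engelking--Kar{\l}owicz step in Theorem \ref{301}).

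Next I would produce, for a set $\{\gamma_i\}$ of order-types \emph{cofinal} in $\lambda$, stationary sets $T_i\s E^{\lambda^+}_{<\lambda}$ on which $\otp(C'_\alpha)=\gamma_i$ is constant, together with club-guessing at each of these order-types. For a regular $\lambda$ this is the ``$O$''-argument of Theorem \ref{301}: the set of indecomposable $\gamma<\lambda$ for which $\{\alpha\mid\otp(C_\alpha)=\gamma\}$ is stationary is cofinal in $\lambda$. For a singular $\lambda$ one instead runs the Fodor argument from the proof of Theorem \ref{61} over $E^{\lambda^+}_{\ne\cf(\lambda)}$, which yields stationary $T_i$ of uncountable cofinalities $\lambda_i$ with $\otp(C_\alpha)=\gamma_i\ge\lambda_i$, so that $\{\gamma_i\}$ is again cofinal in $\lambda$. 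In either case one now performs the $\drop$-and-inverse-collapse construction (as in Claims \ref{322}, \ref{232}, \ref{312}) to pass to a coherent $\square_\lambda$-sequence $\langle C^*_\alpha\rangle$, and the club-guessing argument (Claim \ref{351} for a countable cofinality, and its higher-cofinality analogue, the relevant club-guessing being a theorem of \textsf{ZFC} for every regular cofinality below $\lambda$) shows, exactly as in Claim \ref{433}, that $\{\alpha\in T_i\mid \otp(C^*_\alpha)=\gamma_i\ \&\ C^*_\alpha\s D\}$ is stationary for every club $D\s\lambda^+$.

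The one genuinely new point is the passage from guessing a cofinal family of \emph{large} order-types to hitting \emph{every} limit $\theta<\lambda$, and this is where coherence does the work. Given a club $D\s\lambda^+$ and a limit $\theta<\lambda$, fix $i$ with $\gamma_i>\theta$ and, by the previous step, an $\alpha\in T_i$ with $C^*_\alpha\s D$ and $\otp(C^*_\alpha)=\gamma_i$. Let $\beta:=C^*_\alpha(\theta)$ be the $\theta_{th}$ element of $C^*_\alpha$. As $\theta$ is a limit ordinal below $\otp(C^*_\alpha)$, we have $\beta\in\acc(C^*_\alpha)$, so coherence yields $C^*_\beta=C^*_\alpha\cap\beta$, a club in $\beta$ of order-type $\theta$ with $C^*_\beta\s C^*_\alpha\s D$. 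Hence $\langle C^*_\alpha\mid\alpha<\lambda^+\rangle$ is a coherent $\square_\lambda$-sequence satisfying clause (3).

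The main obstacle is essentially one of bookkeeping: assembling club-guessing with prescribed, cofinally-many order-types uniformly for regular and singular $\lambda$ (in particular the case $\lambda=\aleph_1$, where only the countable cofinality is available and Claim \ref{351} must be invoked directly). All of these ingredients, however, are already established in the cited proofs and are free of any cardinal-arithmetic hypothesis, so beyond re-reading those arguments for their club-guessing conclusion alone, the only new step is the short coherence extraction displayed above.
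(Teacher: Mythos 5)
Your proposal is correct and is essentially the paper's intended argument: this proposition is offered in Section \ref{sec6} as a by-product of ``the technology of this paper'', and that technology is precisely the arithmetic-free club-guessing layer of the proofs of Theorems \ref{61} and \ref{301} that you isolate (trimming via the $E_\epsilon$'s, Fodor/the set $O$, and the $\drop$-plus-inverse-collapse guessing claims, none of which invoke $\ch_\lambda$ or Engelking--Kar{\l}owicz). Even your final coherence-extraction step is the paper's own device: it appears at the end of the proof of Theorem \ref{301}, where one passes from $\alpha\in T_{i^*}$ with $\gamma_{i^*+1}>\theta$ to the ordinal $\alpha^\bullet$ satisfying $\otp(C^{j_{i^*}}_\alpha\cap\alpha^\bullet)=\theta$ and uses coherence to conclude $C^\bullet_{\alpha^\bullet}=C^{j_{i^*}}_\alpha\cap\alpha^\bullet$, exactly as you do with $C^*_\beta=C^*_\alpha\cap\beta$.
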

\begin{note} So $\alpha\mapsto\otp(C_\alpha)$ yields a canonical partition of $\acc(\lambda^+)$
into $\lambda$-many mutually disjoint stationary sets.
\end{note}

\begin{prop} If $\kappa$ is a regular infinite cardinal, and $S\s E^{\kappa^{++}}_\kappa$ is stationary,
then the following are equivalent:
\begin{enumerate}
\item $2^{\kappa^+}=\kappa^{++}$;
\item there exists a sequence $\langle (C_\alpha,S_\alpha)\mid \alpha<\kappa^{++}\rangle$
such that for every club $D\s\kappa^{++}$ and every $A\s\kappa^{++}$,
there exists some $\delta\in S$ with:
\begin{enumerate}
\item $C_\delta$ is a club in $\delta$;
\item $C_\delta\cup\{\delta\}\s \{\alpha\in D\mid A\cap\alpha=S_\alpha\}$.
\end{enumerate}
\end{enumerate}
\end{prop}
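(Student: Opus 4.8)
The plan is to establish the two implications separately; the forward direction is a one-line counting argument, while the reverse direction carries all the weight.

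For the implication $(2)\Rightarrow(1)$, suppose the sequence $\langle (C_\alpha,S_\alpha)\mid \alpha<\kappa^{++}\rangle$ is given. Fix an arbitrary $X\s\kappa^+$ and apply the guessing property to $A:=X$ together with the club $D:=\{\alpha<\kappa^{++}\mid \alpha>\kappa^+\}$. This produces some $\delta\in S$ with $\delta\in D$ and $S_\delta=A\cap\delta$. Since $\delta>\kappa^+\ge\sup(X)$, we get $X\cap\delta=X$, so $S_\delta=X$. Hence every subset of $\kappa^+$ occurs as some $S_\delta$, giving $2^{\kappa^+}\le|\{S_\delta\mid\delta\in S\}|\le\kappa^{++}$, and therefore $2^{\kappa^+}=\kappa^{++}$.

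For $(1)\Rightarrow(2)$, assume $2^{\kappa^+}=\kappa^{++}$ and write $\lambda:=\kappa^+$. First I would invoke Shelah's club-guessing theorem: since $\kappa$ is regular and $\kappa^+<\kappa^{++}$, there is a sequence $\langle c_\delta\mid\delta\in S\rangle$ with each $c_\delta$ a club in $\delta$ of order-type $\kappa$, such that $\{\delta\in S\mid c_\delta\s E\}$ is stationary for every club $E\s\kappa^{++}$. The decisive structural point is that this club guessing is available outright from the two-cardinal gap, with no recourse to a square sequence: in Theorem~\ref{0034} the role now played by free club guessing is there carried by $\square_\lambda$. Next, noting that $2^{\kappa^+}=\kappa^{++}$ yields $|[\kappa^{++}]^{\le\kappa^+}|=\kappa^{++}$, I would fix an enumeration $\{X_\gamma\mid\gamma<\kappa^{++}\}$ of $[\kappa^{++}]^{\le\kappa^+}$ to serve for coding. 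The sequence $\langle S_\alpha\rangle$ is then produced by a reflection argument in the spirit of the one in Section~\ref{sec2}: assuming the desired guessing fails, one builds by recursion on $\tau<\lambda=\kappa^+$ a family of counterexamples $\langle (D_\tau,A_\tau)\mid\tau<\kappa^+\rangle$, codes it into a single master set through the enumeration, and uses the club guessing to locate one $\delta\in S$ whose ladder $c_\delta$ lies inside the diagonal intersection of all the $D_\tau$. Here the argument diverges from the limit-cardinal case: rather than passing through a $\cf(\lambda)$-pigeonhole, I would index directly by the ladder $c_\delta$, of size $\kappa$, so that the accumulated failures force the sets $\{i<\kappa\mid c_\delta(i)\text{ is un-killed at stage }\tau\}$ to form a strictly decreasing $\kappa^+$-chain of subsets of $\kappa$ --- an impossibility. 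Consequently, for the correct parameter, every club $D$ and every $A$ are guessed: there is $\delta\in S$ with $c_\delta\s D$, with $S_\eta=A\cap\eta$ for club-many $\eta\in c_\delta$, and with $S_\delta=A\cap\delta$. Taking $C_\delta$ to be this club of correctly guessed points (a club in $\delta$ contained in $c_\delta\s D$), the pair $C_\delta,\delta$ verifies (a) and (b), as $\delta=\sup(C_\delta)\in D$ by closure of $D$.

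The hard part will be exactly the coherent guessing of $A$ along an entire club, rather than at a single ordinal. When $\square_\lambda$ is present this is trivialized by replacing $C_\delta$ with $\acc(C_\delta)$ and appealing to $S$-coherence, but here neither that nor its surrogates is available: there is no square sequence, and the cardinal arithmetic may well have $(\kappa^+)^\kappa>\kappa^+$, so that neither the Engelking--Kar\l owicz substitution used in Theorem~\ref{301} nor the $\cf(\lambda)$-pigeonhole of Theorem~\ref{61} applies. The resolution I would press on is that, precisely because $|c_\delta|=\kappa<\kappa^+=\lambda$, a reflection recursion of length $\kappa^+$ already delivers the contradiction with no auxiliary device, and the whole construction consumes exactly the hypothesis $2^{\kappa^+}=\kappa^{++}$ --- through the enumeration of $[\kappa^{++}]^{\le\kappa^+}$ that encodes the $\kappa^+$ counterexamples --- which matches the necessity shown in the first paragraph and closes the equivalence.
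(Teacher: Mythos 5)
Your direction $(2)\Rightarrow(1)$ is correct and complete. For $(1)\Rightarrow(2)$, note first that the paper itself states this proposition in Section~\ref{sec6} without proof, as an advertised product of ``the technology of this paper'', so your attempt can only be measured against the Section~\ref{sec2} machinery it is meant to instantiate; your skeleton does match that machinery, and you correctly isolate the two structural points: club guessing on $E^{\kappa^{++}}_\kappa$ is an outright $\zfc$ theorem (so the square sequence feeding Claim~\ref{351} is dispensable), and a Claim~\ref{437}-style counterexample recursion of length $\kappa^+$, powered by an enumeration of $[\kappa^{++}]^{\le\kappa^+}$, should produce the $S_\alpha$'s. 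However, two steps fail as written. The first is your closing move, ``taking $C_\delta$ to be this club of correctly guessed points'': that club depends on the pair $(D,A)$ being guessed, whereas statement (2) requires the sequence $\langle (C_\alpha,S_\alpha)\mid\alpha<\kappa^{++}\rangle$ to be fixed in advance, and a single $\delta$ cannot be pre-assigned the (possibly $\kappa^{++}$-many) different clubs that different pairs demand. The repair is to prove a stronger intermediate claim --- that for the correct parameter the guess is right at \emph{every} point of the pre-fixed ladder $c_\delta$ together with $\delta$ --- so that one may take $C_\delta:=c_\delta$ outright; the recursion can deliver this, but your write-up never faces it, and facing it forces the second problem below.

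The second gap is the counting device. A stage-$\tau$ counterexample does not kill a ladder \emph{position}; it kills a candidate \emph{code} (an ordinal $\gamma$ whose $X_\gamma$ is the putative value of $A_\tau\cap\eta$) at some position $\eta$. A position whose current minimal surviving code dies simply passes to its next code, so your sets $\{i<\kappa\mid c_\delta(i)\text{ un-killed at stage }\tau\}$ need not shrink at every stage; and if one instead tracks (position, code) pairs, the natural universe of pairs has size $\kappa\cdot\kappa^+=\kappa^+$ (codes range below $\delta$), inside which a strictly decreasing $\kappa^+$-chain is no contradiction at all. To shrink the universe to size $\kappa$ you need precisely the auxiliary device you claim to avoid: an increasing filtration of each $\beta\times\beta$ into $\kappa^+$-many sets of size $\le\kappa$ --- the square-free analogue of the matrix $\mathcal H$ of Claim~\ref{436}, whose mere existence (unlike its coherence) needs no $\square_\lambda$ --- together with the observation that, since $|c_\delta\cup\{\delta\}|=\kappa<\kappa^+$, a single filtration level $j^*$ catches all the relevant true pairs. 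That is where your ``because $|c_\delta|=\kappa<\lambda$'' boundedness correctly enters, but as a pigeonhole over the filtration index, not over ladder positions. Finally, this assembly can only succeed at points of cofinality $\le\kappa$: at a point $\beta\in\nacc(c_\delta)$ with $\cf(\beta)=\kappa^+=\cf(\lambda)$, no single level can catch cofinally many true pairs below $\beta$ --- the same obstruction for which the theorem of \cite{sh922} excludes $E^{\lambda^+}_{\cf(\lambda)}$ --- so the ladders must additionally be arranged to avoid $E^{\kappa^{++}}_{\kappa^+}$. For $\kappa>\omega$ this is immediate (replace $c_\delta$ by $\acc(c_\delta)$), but for $\kappa=\omega$ it requires a further idea, e.g.\ the $\nacc$-replacement technique used in Lemma~\ref{25} and Theorem~\ref{301}; your proposal is silent on this.
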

\begin{note}
The above may be seen as a unified evidence to the fact \cite{sh922} that $2^{\kappa^+}=\kappa^{++}$
entails $\diamondsuit(E^{\kappa^{++}}_\mu)$ for all regular $\mu\le\kappa$.
\end{note}

\begin{remark} It follows from Theorem B, that if $W$ is an inner model of $\zfc+\gch+\forall\lambda\square_\lambda$,
and the covering lemma holds for $W$ (e.g., if $W=L$ and $0^\sharp$ does not exist),
then $\sc_\lambda$ and $\sc_{\lambda,\lambda}^{\{\lambda\},1}$ are valid for every singular strong limit cardinal $\lambda$.
\end{remark}

\begin{remark} For the reader who is interested in the dual concept of homogeneity,
we mention that obtaining a \emph{rigid} Souslin tree is considerably easier. For instance,
in \cite{rinot12}, for every uncountable cardinal $\lambda$, a rigid $\lambda^+$-Souslin tree is constructed merely from $\gch+\neg\refl(E^{\lambda^+}_{\not=\cf(\lambda)})+\square^*_\lambda$.
\end{remark}
\subsection*{Open Problems}

\begin{q} By the results of this paper, $\gch+\square_\lambda$ entails $\sc_{\lambda,\lambda}^{\{\lambda\},1}$
for every singular cardinal $\lambda$. Could this be improved to get $\sc_{\lambda,1}^{\{\lambda\},1}$?
\end{q}

For a regular cardinal $\lambda$, we already know that $\textsf{GCH}+\square_\lambda\not\Rightarrow \sc_{\lambda,1}^{\{\lambda\},1}$.

\begin{q} By Theorem B, $\sc_\lambda$ and $\square_\lambda$ are equivalent,
assuming some fragments of $\gch$. Are these  fragments of $\gch$ necessary?
\end{q}

By Proposition \ref{p34}, $\sc_\lambda$ is indeed
equivalent to $\square_\lambda+\ch_\lambda$  whenever $\lambda$ is a singular cardinal,
hence, the open problems are:

(1) Whether $\sc_\lambda$ implies $\lambda^\lambda=\lambda^+$ for every regular cardinal $\lambda$;

(2) Whether $\sc_\lambda$ implies $\lambda^{<\lambda}=\lambda$ for every successor cardinal $\lambda$.

Finally, it is still conceivable that the weaker principle, $\sc_\lambda(S)$, has no effect on cardinal arithmetic.
So, let us ask about the simplest case:
\begin{q} Does $\square_\lambda$ imply $\sc_\lambda(E^{\lambda^+}_\omega)$ for every uncountable cardinal $\lambda$?
what about $\lambda$ which is a singular strong limits of uncountable cofinality?
\end{q}

\newpage
\bibliographystyle{plain}

\end{document}